\newtheorem{theorem}{Theorem}[section]
\newtheorem{lem}[theorem]{Lemma}
\newtheorem{cor}[theorem]{Corollary}
\newtheorem{prop}[theorem]{Proposition}
\newtheorem{conj}[theorem]{Conjecture}
\newcommand{\A}{\mathbb{A}}
\newcommand{\Z}{\mathbb{Z}}
\newcommand{\C}{\mathbb{C}}
\newcommand{\F}{\mathbb{F}}
\newcommand{\N}{\mathbb{N}}
\newcommand{\Hom}{{\rm Hom}\,}
\def\M{{\rm M}}
\def\SL{{\rm SL}}
\def\Sp{{\rm Sp}}
\def\GO{{\rm GO}}
\def\GL{{\rm GL}}
\def\PGL{{\rm PGL}}
\def\PD{{\rm PD}}
\def\GSO{{\rm GSO}}
\def\SO{{\rm SO}}
\def\O{{\mathcal O}}
\def\OR{{\rm O}}
\begin{document}

\title[A local-global question in automorphic forms]
{A local-global question in automorphic forms}
\author{U. K. Anandavardhanan}
\email{anand@math.iitb.ac.in}
\address{Department of Mathematics, Indian Institute of Technology Bombay, Mumbai - 400 076, INDIA.}

\author{Dipendra Prasad}
\email{dprasad@math.tifr.res.in}
\address{Tata Institute of Fundamental Research, Homi Bhabha Road, Mumbai - 400 005, INDIA.}

\subjclass{Primary 11F70; Secondary 22E55}

\date{}

\begin{abstract}

In this paper, we consider the $\SL(2)$ analogue of two well-known theorems about period
integrals of automorphic forms on $\GL(2)$: one due to Harder-Langlands-Rapoport, and the other due 
to Waldspurger.

\end{abstract}

\maketitle

\setcounter{tocdepth}{1}

\tableofcontents

\section{Introduction}

Let $F$ be a number field and ${\Bbb A}_F$ its ad\`ele ring. Let $G$ be
a reductive algebraic group over $F$ with center $Z$, and $H$ a reductive subgroup of $G$
over $F$ containing $Z$. For an automorphic form $\phi$ on $G({\Bbb A}_F)$ on which $Z({\Bbb A}_F)$ acts trivially, the period integral of $\phi$ with respect to $H$ is defined to be the integral (when convergent, which is the case  if $\phi$ is cuspidal 
and $H(F)Z({\Bbb A}_F) \backslash H({\Bbb A}_F)$ has finite volume) $${\mathcal P}(\phi)=\int_{H(F)Z({\Bbb A}_F) \backslash H({\Bbb A}_F)}\phi(h)dh,$$
where $dh$ is the natural measure on 
$H(F)Z({\Bbb A}_F) \backslash H({\Bbb A}_F)$.  

An automorphic representation $\Pi$
of $G({\Bbb A}_F)$ is said to be globally distinguished with respect
to $H$ if this period integral
is nonzero for some $\phi \in \Pi$. More
generally, if $\chi$ is a one-dimensional representation of
$H({\Bbb A}_F)$ trivial on $H(F)$ such that 
$Z({\Bbb A}_F)$ acts trivially on
$\phi(h)\chi^{-1}(h)$, and 
$$\int_{H(F)Z({\Bbb A}_F)\backslash H({\Bbb A}_F)}\phi(h)\chi^{-1}(h)dh,$$
is nonzero for some $\phi \in \Pi$, then $\Pi$ is said to be 
$\chi$-distinguished.

The corresponding local notion is defined as follows. If $\Pi_v$ is an irreducible admissible 
representation of $G(F_v)$, $\Pi_v$ is said to be locally distinguished with respect to 
$H(F_v)$ if it admits a non-trivial $H(F_v)$-invariant linear form. Distinction with respect to 
$\chi_v$, a character of $H(F_v)$,  is defined in a similar manner.

It is obvious that if $\Pi=\otimes_v \Pi_v$ is globally distinguished 
with respect to $H({\mathbb A}_F)$, then each $\Pi_v$ is 
locally distinguished with respect to $H(F_v)$. Indeed, the period 
integral `restricted'  to $\Pi_v$ is a non-trivial $H(F_v)$-invariant 
linear form. The local-global question asks the converse: if $\Pi$ 
is such that each $\Pi_v$ is locally distinguished, 
is $\Pi$ globally distinguished? It seems 
best to break this question into two parts.

\vspace{4mm}

{\bf Question 1:} Let $\Pi=\otimes_v \Pi_v$ be a cuspidal
 automorphic representation
of $G(\A_F)$ such that each of the representations $\Pi_v$ of $G(F_v)$ 
is distinguished by $H(F_v)$. Is there an automorphic representation, say $\Pi'$, 
in the global $L$-packet of $G(\A_F)$
determined by $\Pi $ which is globally distinguished by $H(\A_F)$? 

\vspace{4mm}

Naturally, since the question is about an $L$-packet, one might expect 
some $L$-functions to intervene in the answer to this question. This is the 
case in the work of Waldspurger \cite{waldspurger}, as generalized in \cite{gross1}, and \cite{gross2}, where one 
does not need to go far to look for $\Pi'$, and in fact it must be $\Pi$
itself --- because of a strong multiplicity one theorem known in this context for the 
whole $L$-packet --- if a certain central critical $L$-value is nonzero.

If no automorphic member of the global $L$-packet determined by $\Pi$ is 
globally distinguished, presumably for reasons of an $L$-value, there is 
no further that we need to proceed in this quest in the global $L$-packet
determined by $\Pi$. So we assume that there is a member in the $L$-packet
determined by $\Pi$ which is globally distinguished, which we can then 
assume to be $\Pi$ itself in our further study.
 
\vspace{4mm}

{\bf Question 2:} Suppose $\Pi=\otimes_v \Pi_v$ is an automorphic 
representation of $G(\A_F)$ such that $\Pi$ is globally 
distinguished by $H(\A_F)$. 
 Let $\Pi'=\otimes_v \Pi'_v$ be an automorphic 
representation of $G(\A_F)$ in the same $L$-packet as $\Pi$ 
such that $\Pi'_v$ is locally 
distinguished by $H(F_v)$ at all the places of $F$.  Then is $\Pi'$
globally distinguished? 

\vspace{4mm}

This is the local-global question being talked about in the title of 
this paper, and which being a question about an individual automorphic 
representation,
and not a question about an $L$-packet, is not governed by an $L$-value,
but keeping the parametrization of automorphic representations in mind 
(due to Labesse-Langlands for $\SL_2$, and then Langlands, Kottwitz, and Arthur), 
should be related to certain finite group of connected components of an
appropriate representation (of the Langlands group). However, in the examples
we deal with in this paper, the local-global principle turns out to be true.

The aim of this work is to initiate such a finer study in the global context 
of some low 
rank cases in detail,
by varying the themes already studied. 
In this work we will consider the two questions above for two basic 
cases. These two cases  will be variations on two rather well-studied examples where 
we  change the groups involved slightly allowing us to consider nontrivial 
local and global $L$-packets.

The first example is  one of  $(\GL_2(E),\GL_2(F))$ where $E$ is a quadratic extension of either local or global fields. This came up in the seminal work of Harder, Langlands and Rapoport \cite{harder} which was later pursued by Flicker and Hakim \cite{flicker1,flicker2,hakim}. Global distinction here is characterized by an $L$-function, the Asai $L$-function, having a pole at $s=1$.   We will analyze  questions 1 and 2 for  the related pair 
$(\SL_2(E),\SL_2(F))$. The starting point of this investigation is an 
elementary observation that an automorphic representation of $\GL_2(\A_E)$ 
has a nontrivial period integral on $\SL_2(\A_F)$ if and only if 
it is $\chi$-distinguished for a Gr\"ossencharacter $\chi$ of $\A_F^\times$, 
which is then a condition on the Asai $L$-function twisted by $\chi^{-1}$ to 
have a pole at $s=1$. This allows conclusions about $L$-packets of 
automorphic representations of $\SL_2(\A_E)$, but making conclusions 
about individual automorphic representations of $\SL_2(\A_E)$ is subtler.

The second example that we will consider in this paper is  related to
 the celebrated work of Waldspurger \cite{waldspurger}. 
Here $G = \GL_2(F)$, or more generally the invertible elements of a  quaternion algebra over $F$, and 
$H$ is the  torus defined by a quadratic algebra $E/F$. In this case, $\Pi$ is globally $\chi$-distinguished for a 
Gr\"ossencharacter $\chi: \A_E^\times/E^\times \rightarrow \C^\times$  
if and only if each $\Pi_v$ is locally $\chi_v$-distinguished and $L(\frac{1}{2},{\rm BC}(\Pi) \otimes \chi^{-1})\neq 0$, where ${\rm BC}(\Pi)$ denotes the base change lift of $\Pi$ to $\GL_2({\mathbb A}_E)$. The local picture is well understood 
 by the work of Saito and Tunnell \cite{saito,tunnell}, and 
involves certain local epsilon factors. 
We will analyze questions 1 and 2 above for the related pair $(\SL_2(F),E^1)$ where $E^1$ is the subgroup of $E^\times$ of norm 1 elements. 
It may be noted that there are many non-conjugate embeddings of $E^1$ inside $\SL_2(F)$; we will fix one such embedding; our answers
do not depend on this initial fixing of an embedding of $E^1$ inside $\SL_2(F)$.

In the first example, $(\GL_2(E),\GL_2(F))$, the local-global principle 
almost holds. 
If each $\Pi_v$ is locally distinguished, then $\Pi$ is either 
globally distinguished or is globally distinguished with respect 
to the quadratic character $\omega$ associated to 
$E/F$ \cite{harder}. Thus, if each $\Pi_v$ is distinguished and if at least 
one $\Pi_v$ is not $\omega_v$-distinguished, then $\Pi$ is globally distinguished. In particular, if $\Pi_v$  is  square integrable representation 
at least at one place $v$ of $E$ which is inert over $F$, then $\Pi$ is globally distinguished if and 
only if it is locally distinguished. This follows since a discrete series representation of $\GL_2(E_v)$, once distinguished
by $\GL_2(F_v)$, cannot be $\omega_v$-distinguished.

In \cite{anand2} we had constructed 
an  example of an automorphic representation $\Pi$ on $\SL_2(\A_E)$ 
where each $\Pi_v$ is a locally distinguished representation of $\SL_2(E_v)$ 
but no member of the $L$-packet of $\Pi$ is globally  distinguished.
In this paper, we give a positive answer to question 1 in some situations, but 
have  not succeeded  in getting a complete understanding of it.

\begin{theorem}\label{1}
Let $\Pi$ be a cuspidal representation of $\SL_2({\mathbb A}_E)$. If $\Pi$ appears in the restriction of a CM representation of $\GL_2({\mathbb A}_E)$, 
assume that there is at least one square integrable component at a
place of $E$ which is inert over the corresponding place $v_0$ of $F$. In the CM case, 
assume that either $\Pi$ is CM by 
three distinct quadratic extensions of $E$, or if it is CM by a unique quadratic extension of $E$, 
then at the place 
$v_0$, the local component is also CM by a unique quadratic extension of $E_{v_0}$ 
(or more generally, it is CM only by quadratic extensions which are Galois 
over $F_{v_0}$).
Suppose each $\Pi_v$ is distinguished with respect to $\SL_2(F_v)$. Then 
there is a cuspidal representation in the $L$-packet of $\Pi$ which is distinguished with respect to $\SL_2({\mathbb A}_F)$.   
\end{theorem}

The question 2 has a complete answer in the following theorem.

\begin{theorem}\label{2}
Let $\Pi$ be a cuspidal representation of $\SL_2({\mathbb A}_E)$ which is 
globally distinguished by $\SL_2({\mathbb A}_F)$. 
 Let $\Pi'=\otimes_v \Pi'_v$ be an automorphic 
representation of $\SL_2({\mathbb A}_E)$ in the same $L$-packet as $\Pi$ 
such that $\Pi'_v$ is locally 
distinguished by $\SL_2(F_v)$ at all the places of $F$.  Then $\Pi'$
is globally distinguished. 
\end{theorem}

A key ingredient in the proof of Theorem 1.1 is the multiplicity one theorem
for automorphic representations of $\SL_2(\A_F)$ due to \cite{dinakar}, whereas Theorem 1.2 is proved via an 
exact determination of the fibers of the 
Asai lift from automorphic representations on $\GL_2(\A_E)$
to automorphic representations on $\GL_4(\A_F)$,
completing an earlier work of Krishnamurthy in \cite{muthu}.

In the example considered by Waldspurger, $(\GL_2(F),E^\times)$, 
unlike in the first one, there is a 
genuine global obstruction to global distinction, and this is the 
vanishing of the central value of the base change $L$-function.    In our 
study of the pair $(\SL_2(F),E^1)$, we are naturally led into some questions about $L$-functions.

According to a well-known result of Friedberg and Hoffstein \cite{hoffstein}, for an automorphic 
representation $\pi$ on $\GL_2(\A_F)$, there are 
infinitely many quadratic characters $\eta$, with prescribed local behavior 
at finitely many places, 
such that the twisted  $L$-values, $L(\frac{1}{2},\pi \otimes \eta)$, are nonzero, provided the global root number $\epsilon(\frac{1}{2},\pi)$ 
is one 
possibly after twisting $\pi$  by a quadratic character (see also \cite{jacquet1}). This latter 
condition on the global root number of $\pi$ is automatic if $\pi$  has at least one 
square integrable component \cite{waldspurger1}. 
For the analysis of the special linear analogue of the second example, 
one needs to understand (a special case) of the following simultaneous nonvanishing problem, stated
as a conjecture.

\begin{conj}\label{conj}
 Let $\Pi_1$ and $\Pi_2$ be two cuspidal representations of $\GL_2({\mathbb A}_F)$. Let $\eta$ be a quadratic character  such that \[\epsilon(\frac{1}{2},\Pi_i\otimes \eta)=1\]
for those $\Pi_i$ which are self-dual among $\{\Pi_1,\Pi_2\}$.
Then there are infinitely many quadratic characters $\eta^\prime$, which agree with $\eta$ at any finitely many prescribed places of $F$, such that
\[L(\frac{1}{2},\Pi_1\otimes \eta^\prime)\neq 0 \neq L(\frac{1}{2},\Pi_2 \otimes \eta^\prime).\] 
\end{conj}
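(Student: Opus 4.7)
The plan is to generalize the metaplectic Rankin--Selberg method of Friedberg--Hoffstein from a single $L$-function to a product, combined with a positivity argument to extract simultaneous nonvanishing. For a smooth nonnegative weight $w$ of compact support and $X \geq 1$, form the weighted first moment
\[
M(X) \;=\; \sum_{\eta'} L(\tfrac12, \Pi_1 \otimes \eta')\, L(\tfrac12, \Pi_2 \otimes \eta')\, w\!\left(\mathrm{cond}(\eta')/X\right),
\]
where $\eta'$ runs over quadratic id\`ele class characters of $F^\times$ coinciding with $\eta$ at the prescribed finite set of places. It suffices to show that $M(X) \not\to 0$ in order to produce infinitely many $\eta'$ with simultaneous nonvanishing.

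By Mellin inversion, $M(X)$ can be written as a contour integral in two complex variables of the double Dirichlet series
\[
Z(s_1, s_2) \;=\; \sum_{\eta'} L(s_1, \Pi_1 \otimes \eta')\, L(s_2, \Pi_2 \otimes \eta')\, c(\eta'),
\]
for a suitable local test function $c$ encoding the prescribed ramification. Following Bump--Friedberg--Hoffstein and Chinta--Friedberg--Hoffstein, $Z(s_1, s_2)$ should be realized as a Rankin--Selberg pairing against a metaplectic Eisenstein series on a double cover of $\GL_2(\A_F)$ (or its $\GL_3$ analogue), yielding meromorphic continuation to a tube domain together with functional equations generated by $s_1 \mapsto 1-s_1$ and $s_2 \mapsto 1-s_2$. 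Shifting contours past $\mathrm{Re}(s_i)=1$, the leading residue should be proportional to $X \cdot L(1, \Pi_1 \otimes \tilde\Pi_2)$ (or $X \cdot L(1, \mathrm{Sym}^2 \Pi)$ when $\Pi_1 \cong \Pi_2$), which is nonzero by Jacquet--Shalika, so $M(X) \sim c\,X$ with $c \neq 0$.

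To pass from nonvanishing of the first moment to nonvanishing of individual summands, invoke positivity. In the self-dual case, the root-number hypothesis $\epsilon(1/2, \Pi_i \otimes \eta')=1$ combined with Waldspurger's formula gives $L(1/2, \Pi_i \otimes \eta') \geq 0$, so every summand of $M(X)$ is nonnegative, and the asymptotic $M(X) \gg X$ forces infinitely many $\eta'$ for which both factors are strictly positive. When some $\Pi_i$ fails to be self-dual, replace $M(X)$ by the manifestly nonnegative fourth moment $\sum_{\eta'} |L(1/2, \Pi_1 \otimes \eta')|^2 |L(1/2, \Pi_2 \otimes \eta')|^2 w(\cdot)$; via the identity $|L(1/2, \Pi \otimes \eta')|^2 = L(1/2, \Pi \otimes \eta')\, L(1/2, \tilde\Pi \otimes \eta')$ (using that $\eta'$ is quadratic), this factorizes into a sum of products of four Hecke $L$-values and is amenable to the same moment machinery.

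The principal obstacle is the meromorphic continuation of $Z(s_1, s_2)$ beyond its region of absolute convergence, with vertical-growth bounds strong enough to justify the contour shifts and isolate the main term. For the first-moment form (self-dual case) this appears to be within reach of current multiple-Dirichlet-series technology over $\Q$, but executing it over an arbitrary number field $F$ with prescribed local ramification at a finite set of places is delicate. The fourth-moment variant required for the non-self-dual case seems to demand genuinely new input --- essentially a theory of quadruple Dirichlet series for $\GL_2 \times \GL_2$ twists --- which is likely why the authors state this as a conjecture rather than a theorem.
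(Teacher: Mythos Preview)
The statement you are addressing is labeled \emph{Conjecture} in the paper, and the paper offers no proof of it whatsoever; it is used only as a standing hypothesis in the proofs of Theorems~1.4 and~10.4. So there is no ``paper's proof'' to compare against, and your write-up should not be presented as a proof but at most as a heuristic discussion of why the conjecture is plausible and what would be needed to establish it.

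To your credit, you essentially acknowledge this in your final paragraph. But the earlier paragraphs are phrased as though the argument goes through (``it suffices to show,'' ``the leading residue should be proportional to,'' ``forces infinitely many $\eta'$''), which is misleading. A few specific points where the sketch is looser than it appears:
\begin{itemize}
\item The residue computation is not right as stated. For double Dirichlet series of this type the polar structure at $(s_1,s_2)=(\tfrac12,\tfrac12)$ is governed by a convolution of the two automorphic forms against a metaplectic object, and the main term is not simply $L(1,\Pi_1\times\tilde\Pi_2)$; there are additional archimedean and ramified local factors whose nonvanishing must be checked for the prescribed local data, and this is exactly where the ``prescribed behavior at finitely many places'' condition bites.
\item The positivity step needs more care. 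Nonnegativity of $L(\tfrac12,\Pi_i\otimes\eta')$ in the self-dual case holds unconditionally (Guo, Jacquet--Chen), but your sum $M(X)$ runs over \emph{all} $\eta'$ with the prescribed local components, including those for which the root number is $-1$ and the central value vanishes. Those terms contribute zero, which is harmless for positivity but means the main term must genuinely come from the $\eta'$ with root number $+1$, and you have not separated these.
\item The non-self-dual case via a fourth moment is, as you say, not available; but then the proposal does not actually cover the statement as written, since the conjecture is posed for arbitrary cuspidal $\Pi_1,\Pi_2$.
\end{itemize}
In short: this is a reasonable outline of the expected strategy, and your diagnosis of the obstruction is accurate, but it is not a proof and the paper does not claim one either. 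The honest framing is that the conjecture remains open and your sketch explains why.
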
   

Assuming the conjecture, we give a positive answer to question 1 in this 
case, once again assuming that a local component is discrete series.
In fact, this  paper 
emphasizes the role that a discrete series
local component of an automorphic representation 
might make to a global result: a local condition with 
a global effect,  and from the example in \cite{anand2} we know that the global result 
fails without some local conditions.

\begin{theorem}\label{3} Let $D$ be a quaternion  algebra over a number field $F$, with $E$ a quadratic subfield of $D$.
Let $\Pi = \otimes_v \Pi_v$ be a cuspidal representation of $\SL_1(D)({\mathbb A}_F)$ 
with at least one square integrable component at a place $v_0$ of $F$ which we assume is of odd residue characteristic if $E$ is inert 
and $D$ split at $v_0$.  If each $\Pi_v$ is distinguished with respect to 
$E_v^1$, then there is a cuspidal representation in the $L$-packet of $\Pi$ 
which is distinguished with respect to ${\mathbb A}_E^1$.
\end{theorem}

We have also achieved a positive answer to question 2 assuming Conjecture \ref{conj}, 
but only in the case when the global $L$-packet associated to the automorphic 
representation $\Pi$ is finite. In the more general case, we need a finer
version of Conjecture \ref{conj}, for which we refer the reader to section 10.

We end the introduction by noting the role played by analytic number theory
(simultaneous non-vanishing of central $L$-values in this case) in questions on 
automorphic representations; whether one implies the other, or the other 
way around, only  time will tell.

\vspace{4mm}

\noindent{\bf Acknowledgements:} We were inspired to consider this work by  
a question of  Vinayak Vatsal about the $\SL(2)$ analogue of 
Waldspurger's theorem, in which he also suggested 
that since the $L$-function that appears in Waldspurger's theorem 
does not make sense
for $\SL(2)$, there should be no $L$-function condition for  
the non-vanishing of toric
period integrals for $\SL(2)$! 
We have shown here  that although this would be a consequence
of a `standard conjecture' in analytic number theory, 
we have not managed to prove an
unconditional theorem except in the case of split torus. We thank 
Vatsal for the initial impetus to this work.

\section{Period integral for $\GL_2$ versus $\SL_2$}

Suppose that $\widetilde{\pi}$ is a cuspidal automorphic representation of $\GL_2({\mathbb A}_E)$ where $E$ 
is a quadratic extension of a number field $F$. In this section,
we write down an integral formula 
relating the period integral of automorphic
functions in $\widetilde{\pi}$ along $\SL_2(\A_F)$ versus similar period integral on $\GL_2(\A_F)$; this was treated 
in Section 3 of \cite{anand2}. It allows
one to prove that  distinction  by $\SL_2(\A_F)$ 
of an automorphic representation of $\GL_2({\mathbb A}_E)$
with trivial 
central character restricted to $\A_F^\times$  is the same as being 
$\omega$-distinguished for a quadratic character $\omega: \A^\times_F/F^\times \rightarrow \C^\times$, cf. Proposition 3.3 of
\cite{anand2}.  We note as has  been observed in section 3 of \cite{anand2} that an automorphic representation of $\GL_2({\mathbb A}_E)$
with non-trivial period integral on $\SL_2(\A_F)$ has a twist whose central character restricted to $\A_F^\times$  is trivial.

The following is Proposition 3.2 of \cite{anand2}, and is a simple consequence of elementary Fourier analysis.

\begin{prop}\label{global1}
Let $E$ be a quadratic extension of a number field $F$. Let $\phi$ be a cusp 
form on $\GL_2({\Bbb A}_E)$ with central character which is trivial when restricted to $\A_F^\times$. Then
\[\int_{\SL_2(F) \backslash \SL_2(\A_F)}\phi(g)dg = 
\sum_\omega \int_{\GL_2(F) \A_F^\times \backslash \GL_2(\A_F)} \phi(g)\omega(\det g)dg\]
where the sum on the right hand side of the equality sign is over all 
characters $\omega: F^\times \backslash \A_F^\times \rightarrow \Z/2$.
\end{prop}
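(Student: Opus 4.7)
The plan is to carry out Fourier analysis on the compact abelian group $X := F^\times(\A_F^\times)^2 \backslash \A_F^\times$, whose Pontryagin dual is precisely the set of quadratic Hecke characters $\omega : F^\times \backslash \A_F^\times \to \Z/2$ indexing the sum on the right.

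First, I would fix the section $s : \A_F^\times \to \GL_2(\A_F)$, $s(t) := \mathrm{diag}(t,1)$, of the determinant, which realizes the unique decompositions $\GL_2(F) = \SL_2(F) \cdot s(F^\times)$ and $\GL_2(\A_F) = \SL_2(\A_F) \cdot s(\A_F^\times)$, and introduce the slice integral
\[\Psi(t) := \int_{\SL_2(F)\backslash \SL_2(\A_F)} \phi(h \cdot s(t)) \, dh, \qquad t \in \A_F^\times,\]
so that $\Psi(1)$ is the left-hand side of the identity. The first task is to show that $\Psi$ descends to $X$. For $F^\times$-invariance I would use the substitution $h \mapsto s(\alpha)^{-1} h s(\alpha)$, which preserves both $\SL_2(F)$ and Haar measure on $\SL_2(\A_F)$ because $s(\alpha) \in \GL_2(F)$, combined with the left $\GL_2(F)$-invariance of $\phi$, to obtain $\Psi(\alpha t) = \Psi(t)$. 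For $(\A_F^\times)^2$-invariance I would decompose $s(a^2) = aI \cdot \mathrm{diag}(a, a^{-1})$: the central factor $aI$ is absorbed by $\phi$ thanks to the hypothesis $\chi|_{\A_F^\times} = 1$, while $\mathrm{diag}(a, a^{-1}) \in \SL_2(\A_F)$ is absorbed by right-invariance of Haar measure on $\SL_2(F)\backslash \SL_2(\A_F)$.

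Next I would identify the Fourier coefficient $\hat\Psi(\omega)$ with the integral on the right. The integrand $\phi(g)\omega(\det g)$ is left-invariant under $\GL_2(F)$ (since $\omega|_{F^\times} = 1$) and under $Z(\A_F)$ (since $\omega^2 = 1$ and $\chi|_{\A_F^\times} = 1$), so it descends to $\GL_2(F) \A_F^\times \backslash \GL_2(\A_F)$. Applying Fubini to $g = h \cdot s(t)$, with trivial Jacobian since conjugation by $s(t)$ preserves the Haar measure on $\SL_2(\A_F)$, will give
\[\int_{\GL_2(F) \A_F^\times \backslash \GL_2(\A_F)} \phi(g)\omega(\det g)\, dg \;=\; \int_X \Psi(t)\omega(t)\, dt \;=\; \hat\Psi(\omega),\]
using $\omega = \omega^{-1}$ at the last step.

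Pontryagin inversion on $X$ then yields $\Psi(1) = \sum_\omega \hat\Psi(\omega)$, which is the stated identity. The one point that will need care is the convergence of this a priori infinite Fourier series, since $X$ is compact but profinite with infinite character group. For a $K$-finite cusp form $\phi$, however, $\Psi$ factors through a finite quotient of $X$ determined by the ramification of $\phi$ and its twists, so only finitely many $\omega$ actually contribute and the right-hand side is literally a finite sum; this is the cleanest way to sidestep convergence questions. The remaining bookkeeping -- matching up Haar measures through the Fubini decomposition of $\GL_2(\A_F)$ as $\SL_2(\A_F) \cdot s(\A_F^\times)$, and quotienting correctly on both sides -- is routine once the invariance properties of $\Psi$ are in place.
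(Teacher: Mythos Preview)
Your proposal is correct and is precisely the ``elementary Fourier analysis'' the paper alludes to; the paper does not spell out the argument but refers to Proposition~3.2 of \cite{anand2}, and your slicing by $s(t)=\mathrm{diag}(t,1)$, descent to $X=F^\times(\A_F^\times)^2\backslash\A_F^\times$, and Fourier inversion is exactly how that argument goes. The only cosmetic point is that your finiteness claim for the sum follows more directly than you indicate: smoothness of $\phi$ (right invariance under a compact open subgroup of $\GL_2(\A_E)$) already makes $\Psi$ locally constant on the compact group $X$, hence factoring through a finite quotient, without needing to invoke ramification of twists.
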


The following  proposition relates period integrals over 
${\mathbb A}_E^1$
of  automorphic
forms of $\GL_2({\mathbb A}_F)$ with period integrals over ${\mathbb A}_E^\times$. 
We omit the simple proof  based on elementary Fourier analysis.

\begin{prop}\label{global1}
Let $E$ be a quadratic extension of a number field $F$. Let $\phi$ be a cusp 
form on $\GL_2({\Bbb A}_F)$ with trivial central character. Then
\[\int_{E^1\backslash {\Bbb A}_E^1}\phi(g)dg = 
\sum_\eta \int_{E^{\times}{\mathbb A}_F^{\times}\backslash {\Bbb A}_E^{\times}}\phi(g)\eta(g)dg\]
where the sum on the right hand side of the equality sign is over all 
characters $\eta$ of the compact abelian group $E^{\times}{\mathbb A}_F^{\times} \A_E^1\backslash {\Bbb A}_E^{\times} = 
E^{\times}{\mathbb A}_F^{\times}{\mathbb A}_E^{\times 2}\backslash {\Bbb A}^{\times}_E$.
\end{prop}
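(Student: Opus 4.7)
My proof plan is to mimic the elementary Fourier analysis sketched for the preceding proposition (Proposition 3.2 of \cite{anand2}), working this time on the compact abelian quotient
\[
G := E^\times \A_F^\times \A_E^1 \backslash \A_E^\times.
\]
First, I would reconcile the two descriptions of $G$ appearing in the statement by checking that $\A_F^\times \A_E^1 = \A_F^\times \A_E^{\times 2}$. The inclusion $\supseteq$ is immediate from the identity $y^2 = N(y)\cdot (y/\bar y)$ for $y \in \A_E^\times$, while $\subseteq$ rests on adelic Hilbert 90 (the vanishing of $H^1(\Gal(E/F),\A_E^\times)$): any $x \in \A_E^1$ can be written as $y/\bar y$, whence $y/\bar y = y^2/N(y) \in \A_E^{\times 2}\A_F^\times$.

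Next, I observe that, because $\phi$ has trivial central character and is $\GL_2(F)$-invariant, its restriction to $\A_E^\times \subset \GL_2(\A_F)$ is left invariant under both $E^\times$ and $\A_F^\times$, and hence descends to a function on $\bar G := E^\times \A_F^\times \backslash \A_E^\times$. The characters $\eta$ in the statement are precisely the characters of the quotient $G = \bar G/H$, where $H$ is the image of $\A_E^1$ in $\bar G$. Fubini along the fibration $\bar G \to G$ then gives
\[
\int_{\bar G}\phi(g)\eta(g)\, dg \;=\; \int_G \eta(\bar g)\left(\int_H \phi(\bar g h)\, dh\right) d\bar g.
\]
Summing this identity over all characters $\eta$ of $G$ and invoking character orthogonality (the inner sum $\sum_\eta \eta$ concentrates at the identity of $G$), the right hand side collapses to $\int_H \phi(h)\, dh$, which, by left $E^1$-invariance of $\phi$ together with the standard Haar normalizations, is to be identified with $\int_{E^1 \backslash \A_E^1}\phi(h)\, dh$.

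The main obstacle I expect is the book-keeping of Haar measures: one has to fix compatible normalizations on $\bar G$, $G$, $H$, and the subgroup $\A_E^1 \cap E^\times \A_F^\times$ (which in general properly contains $E^1$) so that the character-inversion identity appears without extraneous multiplicative constants. This is the same technical point already handled in \cite{anand2} for the preceding proposition, and once those conventions are adopted the identity follows as outlined.
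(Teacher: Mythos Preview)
Your proposal is correct and is precisely the ``elementary Fourier analysis'' the paper alludes to (the paper omits the proof entirely, saying only that it is analogous to Proposition~3.2 of \cite{anand2}). Your identification $\A_F^\times\A_E^1=\A_F^\times\A_E^{\times 2}$ via Hilbert~90, the descent of $\phi$ to $\bar G=E^\times\A_F^\times\backslash\A_E^\times$, and the Fourier inversion on the compact quotient $G=\bar G/H$ are exactly what is needed; your caveat about Haar normalizations and the discrepancy between $E^1$ and $\A_E^1\cap E^\times\A_F^\times$ is the only genuine bookkeeping, and you have flagged it correctly.
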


As a consequence,  we have the following.

\begin{prop}\label{global2}
If $\widetilde{\pi}$ is a cusp form on $\GL_2({\Bbb A}_F)$ with trivial central character 
which is  distinguished by 
${\Bbb A}_E^1$, then there is a Gr\"ossencharacter $\eta$ of $E^{\times}\backslash
{\Bbb A}^{\times}_E$ such that $\widetilde{\pi}$ is $\eta$-distinguished 
for ${\rm GL}_1({\Bbb A}_E)$.
Conversely if $\widetilde{\pi}$ is $\eta$-distinguished for 
some Gr\"ossencharacter $\eta$
of $E^{\times}\backslash {\Bbb A}^{\times}_E$, then $\widetilde{\pi}$ is 
${\Bbb A}_E^1$-distinguished. Hence there is a member of the $L$-packet 
of automorphic representations of 
$\SL_2({\Bbb A}_F)$ determined by $\widetilde{\pi}$ which is globally
${\Bbb A}_E^1$-distinguished.
\end{prop}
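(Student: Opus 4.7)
The plan is to deduce both implications from the integral identity of Proposition 2.2, and then pass from $\GL_2(\A_F)$-distinction to the $\SL_2(\A_F)$-$L$-packet by a simple restriction argument.

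The forward direction is immediate. If $\phi \in \widetilde{\pi}$ has non-vanishing $\A_E^1$-period, the right-hand side of Proposition 2.2 is a finite sum of $\eta$-twisted $\A_E^\times$-periods, so at least one summand must be non-zero, exhibiting a Gr\"ossencharacter $\eta$ for which $\widetilde{\pi}$ is $\eta$-distinguished.

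For the converse, the key point is that $\A_E^\times$ embeds into $\GL_2(\A_F)$, so right translation by $\A_E^\times$ preserves the space of automorphic forms in $\widetilde{\pi}$. Starting from $\phi \in \widetilde{\pi}$ with non-zero $\eta$-twisted period, I would form the finite projection
\[
\phi_\eta(g) \;:=\; \sum_t \eta(t)\, \phi(gt),
\]
where $t$ runs over representatives of the finite abelian group $A := E^\times \A_F^\times \A_E^1 \backslash \A_E^\times$. A change of variables $g \mapsto gt^{-1}$ yields, for any character $\eta'$ of $A$,
\[
\int_{E^\times \A_F^\times \backslash \A_E^\times} \phi_\eta(g)\,\eta'(g)\, dg \;=\; \Bigl(\sum_t \eta(t)\,\eta'(t)^{-1}\Bigr) \int_{E^\times \A_F^\times \backslash \A_E^\times} \phi(g)\,\eta'(g)\, dg,
\]
and orthogonality of characters of $A$ kills every summand except $\eta' = \eta$. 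Feeding $\phi_\eta \in \widetilde{\pi}$ into Proposition 2.2 then collapses the right-hand side to a single non-zero term, yielding the required non-vanishing $\A_E^1$-period.

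For the final assertion, I would decompose the space of automorphic forms in $\widetilde{\pi}$, viewed as an $\SL_2(\A_F)$-representation, into its irreducible automorphic constituents $\pi_i$; each $\pi_i$ is an automorphic representation of $\SL_2(\A_F)$ in the global $L$-packet determined by $\widetilde{\pi}$. A form $\phi' \in \widetilde{\pi}$ with non-zero $\A_E^1$-period decomposes as $\phi' = \sum_i \phi'_i$ with $\phi'_i \in \pi_i$, and linearity of the period integral forces at least one $\phi'_i$ to have non-vanishing period, so the corresponding $\pi_i$ is the desired distinguished member of the $L$-packet. There is no genuine obstacle here — the only subtlety is verifying that the finite projection $\phi_\eta$ remains in $\widetilde{\pi}$, which is automatic because $\A_E^\times \subset \GL_2(\A_F)$.
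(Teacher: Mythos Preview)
Your argument is correct and matches the paper's approach: the paper simply writes ``As a consequence, we have the following'' after Proposition~2.2 and gives no further proof, so your explicit reasoning is exactly the kind of detail the authors are suppressing. The forward direction is immediate as you say, and your projection $\phi_\eta$ together with orthogonality on the finite group $A = E^\times \A_F^\times \A_E^1\backslash \A_E^\times$ is the natural way to rule out cancellation among the finitely many terms on the right-hand side of Proposition~2.2; the passage to the $L$-packet via restriction to $\SL_2(\A_F)$ is also fine since cuspidal (unitary) representations restrict semisimply. One small point worth flagging: your orthogonality argument tacitly assumes that $\eta$ itself is a character of $A$, i.e.\ trivial on $\A_E^1$ (equivalently, factors through the norm); this is the intended reading, consistent with the characters appearing in Proposition~2.2 and with how the proposition is invoked later in the paper, but the statement as written does not say so explicitly.
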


\section{Distinction as a functorial lift}

In this section we recast the well-known criterion about distinction of $\GL_2(E)$ representations to $\SL_2$ according to which 
a representation of $\GL_2(E)$ is distinguished or $\omega_{E/F}$-distinguished by $\GL_2(F)$ if and only if 
$\omega_\pi|_{_{F^\times}}=1$ and $\pi^\sigma \cong \pi ^\vee$.

\begin{theorem} Let $E/F$ be a quadratic extension of non-archimedean local fields. Then, an irreducible admissible representation $\pi$ of 
 $\GL_2(E)$ is distinguished by $\SL_2(F)$ if and only if it belongs to the twisted basechange map, i.e., 
a character twist of $\pi$ is a basechange from $\GL_2(F)$.  Exactly the same conclusion about global distinction 
of automorphic representations of $\GL_2(\A_K)$ with respect to $\SL_2(\A_k)$ when $K/k$ is a quadratic extension of
number fields.
\end{theorem}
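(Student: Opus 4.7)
The plan is to convert $\SL_2(F)$-distinction into twisted $\GL_2(F)$-distinction, and then apply the Flicker--Hakim criterion recalled at the opening of the section to recognize the latter as a (twisted) basechange.

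First, since $\SL_2(F)$ is normal in $\GL_2(F)$ with abelian quotient $\GL_2(F)/\SL_2(F)\cong F^{\times}$ (via the determinant), the space of $\SL_2(F)$-invariant linear forms on $\pi$ is a smooth $F^{\times}$-module and therefore decomposes into character eigenspaces. Consequently, $\pi$ is $\SL_2(F)$-distinguished if and only if there exists a character $\chi$ of $F^{\times}$ such that $\pi$ admits a non-zero $(\GL_2(F),\chi\circ\det)$-equivariant linear form; picking any extension $\tilde\chi$ of $\chi$ from $F^{\times}$ to $E^{\times}$, this last condition translates directly into $\pi\otimes\tilde\chi^{-1}$ being $\GL_2(F)$-distinguished.

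Next I would invoke the classical criterion: for an irreducible admissible representation $\pi_0$ of $\GL_2(E)$, $\pi_0$ is $\GL_2(F)$-distinguished or $\omega_{E/F}$-distinguished if and only if $\omega_{\pi_0}|_{F^{\times}}=1$ and $\pi_0^{\sigma}\cong\pi_0^{\vee}$. Given these two conditions, Hilbert~90 for characters produces a character $\nu$ of $E^{\times}$ with $\nu^{\sigma}/\nu=\omega_{\pi_0}$, and a short computation using the identity $\pi_0\cong\pi_0^{\vee}\otimes\omega_{\pi_0}$ then yields $(\pi_0\otimes\nu)^{\sigma}\cong\pi_0\otimes\nu$, so $\pi_0\otimes\nu$ is $\sigma$-invariant and hence a basechange from $\GL_2(F)$. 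Applying this with $\pi_0=\pi\otimes\tilde\chi^{-1}$ (absorbing the possible $\omega_{E/F}$-alternative by modifying $\tilde\chi$ by any fixed extension of $\omega_{E/F}$ from $F^{\times}$ to $E^{\times}$) shows that a character twist of $\pi$ is a basechange. The converse direction is automatic: if $\pi\otimes\eta^{-1}=\mathrm{BC}(\tau)$, then Flicker's period integral (or the Asai integral attached to $\tau$) furnishes a non-zero $\GL_2(F)$-invariant functional on $\pi\otimes\eta^{-1}$, which pulls back to a $(\GL_2(F),\eta|_{F^{\times}}\circ\det)$-equivariant, and therefore $\SL_2(F)$-invariant, functional on $\pi$.

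The global assertion proceeds by the same scheme: Proposition~2.1 of the previous section replaces the local Fourier decomposition in step one; the Harder--Langlands--Rapoport characterization of global $\GL_2(\A_K)$-distinction via a pole of the Asai $L$-function at $s=1$ plays the role of the Flicker--Hakim criterion; and Arthur--Clozel cyclic basechange supplies the global $\tau$. The only genuine obstacle, in either setting, is careful bookkeeping of the character twists through the dichotomy between $\GL_2(F)$- and $\omega_{E/F}$-distinction built into the classical criterion; this is harmless because $\omega_{E/F}$ admits at least one extension to $E^{\times}$, and any such extension can be silently absorbed into the twisting character $\eta$.
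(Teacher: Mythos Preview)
Your forward direction is correct and is essentially identical to the paper's argument: reduce $\SL_2(F)$-distinction to $\eta$-distinction for some character $\eta$ of $F^\times$, apply the Flicker--Hakim criterion to get $\omega_{\pi_0}|_{F^\times}=1$ and $\pi_0^\sigma\cong\pi_0^\vee$, and then use Hilbert~90 to produce a $\sigma$-invariant twist, hence a basechange. (Your parenthetical about ``absorbing the $\omega_{E/F}$-alternative'' is unnecessary here, since you are using only the easy implication ``distinguished $\Rightarrow$ $\pi_0^\sigma\cong\pi_0^\vee$ and $\omega_{\pi_0}|_{F^\times}=1$'', where no dichotomy arises; but this is harmless.)

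Your converse direction, however, has a genuine gap. You assert that if $\pi\otimes\eta^{-1}=\mathrm{BC}(\tau)$ then there is a nonzero $\GL_2(F)$-invariant functional on $\mathrm{BC}(\tau)$. This is false in general: the central character of $\mathrm{BC}(\tau)$ is $\omega_\tau\circ\mathrm{Nm}$, whose restriction to $F^\times$ is $\omega_\tau^2$, and as soon as $\omega_\tau^2\neq 1$ no $\GL_2(F)$-invariant functional can exist. Neither ``Flicker's period integral'' nor the Asai integral bypasses this obstruction. What is actually needed --- and what the paper does --- is one further twist: choose an extension $\grave\omega$ of $\omega_\tau$ to $E^\times$ and set $\grave\pi=\mathrm{BC}(\tau)\otimes\grave\omega^{-1}$. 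A short computation (using $\grave\omega\cdot\grave\omega^\sigma=\omega_\tau\circ\mathrm{Nm}$) shows $\grave\pi^\sigma\cong\grave\pi^\vee$ and $\omega_{\grave\pi}|_{F^\times}=1$, so the Flicker criterion applies and $\grave\pi$ is either distinguished or $\omega_{E/F}$-distinguished. This yields $\eta$-distinction of $\mathrm{BC}(\tau)$ for \emph{some} character $\eta$ of $F^\times$, hence $\SL_2(F)$-distinction of $\pi$. The same correction is needed in your global sketch.
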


\begin{proof} We will write the argument below assuming $E/F$ is a quadratic extension of local fields, but the same argument
works verbatim for number fields.

Let TBC  denote the base change map from irreducible admissible representations of $\GL_2(F)$, considered up to twists by characters, to irreducible admissible representations of $\GL_2(E)$, considered up to twists by characters. Thus, any representation in the image of TBC is of the form BC$(\pi^\prime)\otimes \chi$ for a representation $\pi^\prime$ of $\GL_2(F)$ and a character $\chi$ of $E^\times$. 

We claim that the representation $\pi$ of  $\GL_2(E)$ is distinguished by $\GL_2(F)$ with respect to a character $\eta$ of $F^\times$ if and only if $\pi$ 
 is in the image of the twisted base change map. 
Since we are looking at representations modulo character twists, we can assume that $\eta=1$, thus $\pi$ itself is 
distinguished, and therefore by standard results, cf. \cite{flicker2}, it follows that:
\[\omega_\pi|_{_{F^\times}}=1 ~~ \& ~~ \pi^\vee \cong \pi^\sigma.\]
If we write $\omega_\pi=\mu^{-1}\mu^\sigma$ for a character $\mu$ of $E^\times$, then $\pi^\vee \cong \pi^\sigma$ implies  
 that $\pi\otimes \mu$ is Galois stable and hence $\pi$ is in the image of TBC.

Conversely, if $\pi$ is of the form BC$(\pi^\prime) \otimes \chi$, then we prove that $\pi$ is $\SL_2(F)$-distinguished, for which we may as 
well assume that $\pi = {\rm BC}(\pi^\prime)$. 

Let $\omega^\prime$ be the central character of $\pi^\prime$, and let $\grave{}{\omega}$ 
be an extension of $\omega^\prime$ to $E^\times$. Then $\omega_\pi = \grave{}{\omega} \cdot 
\grave{} {\omega}^{\sigma}$, from which it can be checked that 
the representation $\grave{}\pi= \pi \otimes \grave{}{\omega}^{-1} $ has the property
$ \grave{}\pi^\vee \cong \grave{}\pi^{ \sigma} $ and that the  central character of 
$~\grave{} \pi$ restricted to $F^\times$ is trivial. 
This shows that $\grave{} \pi$ is either distinguished or $\omega_{_{E/F}}$ distinguished 
by $\GL_2(F)$ by Theorem 7 of \cite{flicker2}, hence $\pi$ is 
$\eta$-distinguished for
some character $\eta$ of $F^\times$.
\end{proof}

This theorem allows to interpret distinction as a lifting of maps,  
$$
\xymatrix{
& \PGL_2(\C) \times W_k,
\ar@{_(->}[d]\\
W'_k \ar@{-->}[ur]\ar[r] & (\PGL_2(\C) \times \PGL_2(\C)) \rtimes {\rm Gal}{(K/k)},
}
$$
where $k$ can be a local or global field, and in the latter case, $W'_k$ needs to be replaced by the conjectural Langlands group whose irreducible 
$n$-dimensional complex representations classify  cuspidal automorphic representations of $\GL_n(\A_k)$.

In this language, the lifting question in the {\it untwisted} diagram, 
$$
\xymatrix{
& \PGL_2(\C) \times W_k,
\ar@{_(->}[d]\\
W'_k \ar@{-->}[ur]\ar[r] & (\PGL_2(\C) \times \PGL_2(\C)) \times {\rm Gal}{(K/k)},
}
$$
locally asks if two representations of $\GL_2(k)$ are character twists of each other, and globally if they are
twists of each other by a Gr\"ossencharacter of $\A_k$; thus in this case, a theorem of Dinakar Ramakrishnan \cite{dinakar}
guarantees that local lifts in the above diagram imply a global lift, whereas a theorem, or rather a construction of 
Blasius, proves that existence of local lifts does not guarantee global lift when $\PGL_2(\C)$  is replaced by $\PGL_n(\C)$.

\section{Distinction of some member in an $L$-packet for the pair    $(\SL_2(E),\SL_2(F))$}

In this section, we prove Theorem 1.1, which we recall here for the convenience of the reader.

\begin{theorem}\label{1}
Let $\Pi$ be a cuspidal representation of $\SL_2({\mathbb A}_E)$. If $\Pi$ appears in the restriction of a CM representation of $\GL_2({\mathbb A}_E)$, 
assume that there is at least one square integrable component at a
place of $E$ which is inert over the corresponding place $v_0$ of $F$. In the CM case, 
assume that either $\Pi$ is CM by 
three distinct quadratic extensions of $E$, or if it is CM by a unique quadratic extension of $E$, 
then at the place 
$v_0$, the local component is also CM by a unique quadratic extension of $E_{v_0}$ 
(or more generally, it is CM only by quadratic extensions which are Galois 
over $F_{v_0}$).
Suppose each $\Pi_v$ is distinguished with respect to $\SL_2(F_v)$. Then 
there is a cuspidal representation in the $L$-packet of $\Pi$ which is distinguished with respect to $\SL_2({\mathbb A}_F)$.   
\end{theorem}

\begin{proof} Let $\Pi = \otimes_v \Pi_v$ be a cuspidal representation of 
$\SL_2({\mathbb A}_E)$ with each $\Pi_v$ distinguished by $\SL_2(F_v)$. 
Let $\widetilde{\Pi} = \otimes_v \widetilde{\Pi}_v$ be a cuspidal representation of $\GL_2({\mathbb A}_E)$ 
containing $\Pi$.  We claim that there is a Gr\"ossencharacter $\chi$ of ${\mathbb A}_E^{\times}$ such that 
\[\widetilde{\Pi}^\sigma \cong \widetilde{\Pi}^\vee \otimes \chi.\] 
To this end, observe that since for each $v$, 
the representation $\widetilde{\Pi}_v$ of $\GL_2(E_v)$ 
is given to be $\SL_2(F_v)$-distinguished,
there is a character $\eta_v$ of $F_v^{\times}$ such that $\widetilde{\Pi}_v$ is $\eta_v^{-1}$-distinguished with respect to $\GL_2(F_v)$. If $\widetilde{\eta}_v$ denotes an extension of $\eta_v$ to $E_v^{\times}$, then $\widetilde{\Pi}_v \otimes \widetilde{\eta}_v$ is distinguished with respect to $\GL_2(F_v)$, and this implies that \cite{flicker2,hakim}

\[(\widetilde{\Pi}_v \otimes \widetilde{\eta}_v)^\vee \cong (\widetilde{\Pi}_v \otimes \widetilde{\eta}_v)^\sigma,\]
or
\[\widetilde{\Pi}_v^\sigma \cong \widetilde{\Pi}_v^\vee \otimes \eta_v \circ \N m.\]

By a theorem of Ramakrishnan \cite{dinakar}, if two automorphic representations of $\GL_2({\mathbb A}_E)$ 
are locally twists of each other at all places of a number field $E$, then they are
globally twists of each other by a Gr\"ossencharacter $\chi$ on ${\mathbb A}_E^\times$, proving our claim
that $\widetilde{\Pi}^\sigma \cong \widetilde{\Pi}^\vee \otimes \chi$.

For the proof of Theorem \ref{1}, it suffices to prove that there is a Gr\"ossencharacter $\chi$ on ${\mathbb A}_E^\times$ with $\chi^\sigma = \chi$,
and with $\widetilde{\Pi}^\sigma \cong \widetilde{\Pi}^\vee \otimes \chi$, 
because then one can write $\chi^{-1} = \mu \mu^\sigma$, which means that 
$(\widetilde{\Pi} \otimes \mu
)^\sigma \cong (\widetilde{\Pi} \otimes \mu)^\vee$, 
and hence $\widetilde{\Pi} \otimes \mu$ is either $\GL_2({\mathbb A}_F)$ distinguished, or $\omega_{E/F}$ distinguished by $\GL_2({\mathbb A}_F)$. 
This means that some member in the global 
$L$-packet determined by the automorphic 
representation $\Pi$ of $\SL_2({\mathbb A}_E)$ has nontrivial period
integral on $\SL_2({\mathbb A}_F)$.

We first dispose off the case when $\widetilde{\Pi}$ is non-CM, i.e., it is not automorphically induced from a Gr\"ossencharacter of a quadratic extension, equivalently,  there is no nontrivial self-twist by a Gr\"ossencharacter. In this case,
$\widetilde{\Pi}^\sigma \cong \widetilde{\Pi}^\vee \otimes \chi$ implies 
$\widetilde{\Pi}^\sigma \cong \widetilde{\Pi}^\vee \otimes \chi^\sigma$, and therefore
since $\widetilde{\Pi}$ does not have CM, $\chi^\sigma = \chi$.

We now assume that $\widetilde{\Pi}$ has CM and that there is a place of $F$  inert in $E$, say $v_0$ in $E$, such that $\widetilde{\Pi}_{v_0}$ is square integrable. Suppose the assertion of the theorem is not true. Then 
$\widetilde{\Pi}^\sigma \cong \widetilde{\Pi}^\vee \otimes \chi$ with $\chi \neq \chi^\sigma$.

By the reciprocity isomorphism $W_F^{ab} \cong {\mathbb A}_F^\times/F^\times$, where $W_F$
is the Weil group of $F$, 
 operations on $W_F$ and its subgroups  have their analogues on the automorphic side. In particular, by $\boxtimes$ and $\boxplus$, we denote the operations corresponding to tensor product and direct sum. Our assumption $\widetilde{\Pi}^\sigma \cong \widetilde{\Pi}^\vee \otimes \chi$
 can be rewritten as
\[\widetilde{\Pi} \boxtimes \widetilde{\Pi}^\sigma 
= \chi \boxplus \chi^\sigma \boxplus {\rm BC}(\rho),\]
for a certain two dimensional representation $\rho$ of $W_F$. 

If $\widetilde{\Pi}$ 
has CM by three quadratic extensions, then $\widetilde{\Pi}$ has self-twists by three quadratic characters,
forcing $\widetilde{\Pi} \boxtimes \widetilde{\Pi}^\sigma $ which contains a character to be a sum of four characters permuted amongst themselves by $\sigma$. Therefore,
${\rm BC}(\rho)$ is  a sum of two characters which we assume is of the form $\mu  \boxplus \mu^\sigma$ with $\mu  \not =  \mu^\sigma$, as  the other possibilities create  a $\sigma$-invariant character of $\A_E^\times/E^\times$. We will now look at the above decomposition at the place $v_0$ of $E$:
\[\widetilde{\Pi}_{v_0} 
\boxtimes \widetilde{\Pi}_{v_0}^\sigma 
= \chi_{v_0} \boxplus \chi_{v_0}^\sigma \boxplus {\rm BC}(\rho_{v_0}).\]
Note that $\chi_{v_0} \neq \chi_{v_0}^\sigma$, 
since $\widetilde{\Pi}_{v_0}$ is square integrable 
and thus corresponds to an irreducible representation of $W_{E_{v_0}}$, and therefore 
each character in the decomposition of $\widetilde{\Pi}_{v_0} \boxtimes \widetilde{\Pi}_{v_0}^\sigma $
appears with multiplicity 1 by Schur's lemma, 
forcing $\chi_{v_0} \not = \chi^\sigma_{v_0}$.  Further in the case,
${\rm BC}(\rho) = \mu \boxplus  \mu^\sigma$, $\mu \not = \mu^\sigma$, we again have $\mu_{v_0} \not = 
\mu^\sigma_{v_0}$, which is contradictory to our assumption of having a $\sigma$-invariant
character inside $\widetilde{\Pi}_{v} \boxtimes \widetilde{\Pi}_{v}^\sigma $ at  all places $v$ of 
$E$.

We next deal with the other case listed in Theorem \ref{1}:  when $\widetilde{\Pi}$ has CM by a unique quadratic
extension of $E$, and $\widetilde{\Pi}_{v_0}$ also has CM by a unique quadratic extension 
of $E_{v_0}$, call it $M_0$. Let $\omega_0$ be the 
corresponding 
 quadratic character of $E^\times_{v_0}$ associated to $M_0$. By
the identity $\widetilde{\Pi}_{v_0}^\sigma \cong \widetilde{\Pi}_{v_0}^\vee \otimes \chi_{v_0}$,
we find that the extension $M_0$ of $E_{v_0}$ is $\sigma$-invariant, hence the character $\omega_0$ 
of $E_{v_0}$ is $\sigma$-invariant. The  identities 
$$\widetilde{\Pi}_{v_0}^\sigma \cong \widetilde{\Pi}_{v_0}^\vee 
\otimes \chi_{v_0} \cong 
\widetilde{\Pi}_{v_0}^\vee \otimes \eta_{v_0} \circ \N m
,$$ 
imply that $\chi_{v_0} = \eta_{v_0}\circ \N m,$ or $\chi_{v_0} = \omega_0 \eta_{v_0} \circ \N m.$  Since 
$\omega_0$   is $\sigma$-invariant, in either case $\chi_{v_0}$ is $\sigma$-invariant,
which is a contradiction to Schur's lemma as already observed before.

Clearly, the same proof works when $\widetilde{\Pi}_{v_0}$  has CM by three 
quadratic extension of $E_{v_0}$ which are all Galois over $F_{v_0}$ since all
that mattered for the proof was that the corresponding quadratic characters of
$E^\times_{v_0}$ are $\sigma$-invariant.  
\end{proof}
\section{Tensor Induction,  or Asai Lift}

In the study of automorphic representations of $\GL_2(\A_E)$ which are distinguished by
$\GL_2(\A_F)$, Asai lift plays an important role, and it does so in our work on the
corresponding questions for $\SL_2$.
The specific aim of this section will be to determine the fibers of the Asai lift
$\widetilde{\pi} \rightarrow {\rm As}(\widetilde{\pi})$ 
from automorphic representations of $\GL_2(\A_E)$
to automorphic representations of $\GL_4(\A_F)$. 
This question was discussed by Muthu Krishnamurthy in \cite{muthu}; however, in the 
case 
where it really concerns us, the case of CM representations of $\GL_2$, his 
result was incomplete exactly in the place where it matters to us. We have 
completed his work in this section.\footnote{There is a recent preprint of 
Muthu Krishnamurthy, cf. \cite{muthu2} where he too completes his earlier work.}

We begin this section by carefully 
recalling the notion of {\it tensor induction}, also 
called Asai lift in a particular case (when the subgroup involved is of 
index 2), which is a purely group theoretic notion.

Let $H$ be a subgroup of a group $G$ of finite index $n$, and ${\mathcal G}$ 
an arbitrary group. Define ${\mathcal G}^{G/H}$ 
to be the set of all
set theoretic maps $\phi: G \rightarrow {\mathcal G}$ such that $\phi(gh) = \phi(g)$ for all $g \in G, h \in H$. Clearly ${\mathcal G}^{G/H}$ is a group 
with a natural action of $G$ on the left, so we can form 
the semi-direct product ${\mathcal G}^{G/H} \rtimes G$.

It is easy to prove the following lemma, which is nothing but a form of 
Frobenius reciprocity for induced representations in this context.

\begin{lem} There exists a natural bijection 
$$\Hom(H,{\mathcal G})/ {\sim}  \longleftrightarrow \Hom(G, {\mathcal G}^{G/H} \rtimes G)/{\sim},$$
where we consider only those homomorphisms in $\Hom(G, {\mathcal G}^{G/H} 
\rtimes G),$ whose composition with the natural map from 
${\mathcal G}^{G/H} \rtimes G$ to $G$ is the identity map from $G$ to $G$; the equivalence relation on the left hand side
is conjugation by ${\mathcal G}$, and on the right is conjugation by ${\mathcal G}^{G/H}$. 
\end{lem}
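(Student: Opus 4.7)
The plan is to realize the bijection explicitly by an ``extend and take semidirect product'' construction, which is the group-theoretic avatar of tensor induction.

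Choose, once and for all, a section $s : G/H \to G$ of the quotient map with $s(\bar e) = e$, where $\bar e := H$. For each $g \in G$ and each coset $\bar x \in G/H$, the element $\alpha_g(\bar x) := s(g\bar x)^{-1} \, g \, s(\bar x)$ lies in $H$, and a direct check gives the cocycle identity $\alpha_{g_1 g_2}(\bar x) = \alpha_{g_1}(g_2 \bar x)\, \alpha_{g_2}(\bar x)$. Given $\phi \in \Hom(H, {\mathcal G})$, I define $\Phi(g) := (F_g, g)$ with $F_g(\bar x) := \phi(\alpha_g(g^{-1}\bar x))$, taking the $G$-action on ${\mathcal G}^{G/H}$ to be $(g \cdot f)(\bar x) = f(g^{-1}\bar x)$. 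Applying $\phi$ to the cocycle identity and using the semidirect-product multiplication rule shows at once that $\Phi : G \to {\mathcal G}^{G/H} \rtimes G$ is a homomorphism whose composition with the projection to $G$ is the identity.

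In the opposite direction, given any such $\Phi(g) = (F_g, g)$, observe that $\bar e$ is the unique element of $G/H$ fixed by the left action of $H$. Hence evaluation at $\bar e$ yields a well-defined map $\phi : H \to {\mathcal G}$ by $\phi(h) := F_h(\bar e)$; the identity $F_{h_1 h_2}(\bar x) = F_{h_1}(\bar x)\, F_{h_2}(h_1^{-1}\bar x)$, coming from $\Phi(h_1 h_2) = \Phi(h_1)\Phi(h_2)$, specializes at $\bar x = \bar e$ to $\phi(h_1 h_2) = \phi(h_1)\phi(h_2)$.

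It remains to check that the two constructions are mutually inverse modulo the stated equivalences. Since $s(\bar e) = e$ one has $\alpha_h(\bar e) = h$, so the roundtrip $\phi \mapsto \Phi \mapsto \phi'$ is the identity on $\Hom(H, {\mathcal G})$. A different section can be written as $s'(\bar x) = s(\bar x) c(\bar x)$ for some $c : G/H \to H$, and a short computation produces $u := \phi \circ c \in {\mathcal G}^{G/H}$ with $\Phi' = u \Phi u^{-1}$, so the $\mathcal{G}^{G/H}$-orbit of $\Phi$ depends only on $\phi$. Conversely, conjugating $\Phi$ by $u \in {\mathcal G}^{G/H}$ replaces $\phi(h) = F_h(\bar e)$ by $u(\bar e)\, \phi(h)\, u(\bar e)^{-1}$, and every $\mathcal{G}$-conjugate of $\phi$ arises in this way by taking $u$ constant. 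The main bookkeeping obstacle is this last paragraph, namely matching a change of section with an explicit conjugation by an element of $\mathcal{G}^{G/H}$; this is a direct but careful calculation using the definitions of $F_g$ and of the $G$-action, after which the bijection on equivalence classes is immediate.
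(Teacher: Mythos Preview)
The paper does not actually prove this lemma; it simply asserts that it is ``easy'' and is ``a form of Frobenius reciprocity for induced representations.'' Your write-up is therefore already more detailed than what the authors provide, and your construction via a choice of section and the associated $H$-valued cocycle $\alpha_g$ is exactly the standard way to make this Shapiro-type statement explicit.

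That said, there is one genuine gap. You verify that the roundtrip $\phi \mapsto \Phi \mapsto \phi'$ is the identity, and you verify that both maps descend to equivalence classes. From this you conclude ``the bijection on equivalence classes is immediate.'' But what you have shown only gives $\bar B \circ \bar F = \mathrm{id}$, hence that $\bar F$ is injective and $\bar B$ is surjective. You have \emph{not} shown $\bar F \circ \bar B = \mathrm{id}$: given an arbitrary $\Phi(g) = (F_g,g)$, you must still check that $\Phi$ is $\mathcal{G}^{G/H}$-conjugate to the $\Phi'$ built from $\phi(h)=F_h(\bar e)$ via your section $s$. Your discussion of ``change of section'' does not address this, because an arbitrary $\Phi$ need not arise from \emph{any} section a priori.

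The fix is a short computation in the same spirit as the ones you already did. Writing $g = s(\bar x)\,\alpha_g(g^{-1}\bar x)\,s(g^{-1}\bar x)^{-1}$ and applying the cocycle identity $F_{g_1g_2}(\bar x)=F_{g_1}(\bar x)F_{g_2}(g_1^{-1}\bar x)$ three times gives
\[
F_g(\bar x) \;=\; F_{s(\bar x)}(\bar x)\,\phi\bigl(\alpha_g(g^{-1}\bar x)\bigr)\,F_{s(g^{-1}\bar x)}(g^{-1}\bar x)^{-1},
\]
so with $u(\bar x):=F_{s(\bar x)}(\bar x)$ one has $\Phi = u\,\Phi' u^{-1}$. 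Adding this one line completes the argument.
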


Now given a representation $(\pi,V)$ of ${\mathcal G}$, it gives rise
to a representation $\otimes^{G/H}V$ of ${\mathcal G}^{G/H}$ which clearly 
extends to one of the semi-direct product ${\mathcal G}^{G/H} \rtimes G$.
Taking ${\mathcal G}$ to be $\GL(V)$ with its natural representation on $V$,
the previous lemma allows one to associate to a representation
$(\pi,V)$ of $H$ of dimension $d$, a representation of $G$, to be denoted
by ${\rm As}(V)$, of dimension $d^n$, called the tensor induction, or the Asai lift 
of the representation $(\pi,V)$ of $H$.

For a vector space  $W$ over $\C$ equipped with a quadratic form $q$ on it, 
there is the notion of the orthogonal similitude group $\GO(W)$, defined by 
$$\GO(W)= \{ g \in \GL(W)| q(gw) = \lambda(g) q(w)\,\,\,\,  \forall w \in W\};$$
the map $g \rightarrow \lambda(g) \in \C^\times$ is a character on $\GO(W)$, called the similitude character. If $W$ is of 
even dimension, the special orthogonal similitude group, denoted by $\GSO(W)$,
which is the connected component of identity of $\GO(W)$, is defined by
$$\GSO(W) = \{g \in \GO(W)| \lambda(g)^{\dim W/2} = \det g\}.$$

The following well-known result lies at the basis of our proof. 
It can itself be considered as a local-global principle for 
orthogonal groups, 
eventually responsible for multiplicity one (conjecture) for 
automorphic representations of orthogonal groups, or more generally any 
classical group, cf. \cite{larsen}.

\begin{lem} Let $W$ be a finite dimensional vector space over $\C$ 
together with a quadratic form $q$ on it. Suppose $\pi_1$ and $\pi_2$ 
are two representations of a group $G$ into $\GO(W)$ such that the similitude
characters $\lambda_1$ and $\lambda_2$ of $\pi_1$ and $\pi_2$ are the same. 
Then the representations $\pi_1$ and $\pi_2$ of $G$ with values in $\GO(W)$ 
are equivalent, i.e., conjugate  in $\GO(W)$, if and only if they are
equivalent in $\GL(W)$.
\end{lem}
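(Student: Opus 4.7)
The ``only if'' direction is immediate from $\GO(W)\subset \GL(W)$, so I focus on the converse. Starting from $A\in \GL(W)$ with $A\pi_1(g)A^{-1}=\pi_2(g)$ for all $g\in G$, my plan is to correct $A$ on the right by an element of the centralizer $R^\times$ of $\pi_1(G)$ in $\GL(W)$ so that the corrected matrix lies in $\GO(W)$. The first step is to transport the form through $A$: setting $q'(v,w):=q(Av,Aw)$, a one-line check using $A\pi_1=\pi_2 A$ and $\lambda_1=\lambda_2=\lambda$ shows that $\pi_1$ preserves $q'$ with the same similitude character $\lambda$ as it preserves $q$. Viewing $q,q':W\to W^*$ as isomorphisms, the element $C:=q^{-1}q'$ then lies in $R:=C_{{\rm End}(W)}(\pi_1(G))$ and is self-adjoint under the involution $X\mapsto X^*:=q^{-1}X^\top q$ of ${\rm End}(W)$, which restricts to $R$ because $\pi_1$ preserves $q$ up to a scalar. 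A short manipulation reveals that $Ac\in \GO(W)$ for some $c\in R^\times$ if and only if
\[c^*\,C\,c=\mu\,I\]
in $R$ for some scalar $\mu\in \C^\times$. The lemma is thereby reduced to a purely algebraic question about the algebra with involution $(R,*)$ and the self-adjoint invertible element $C$.

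To solve this question I invoke semisimplicity of $\pi_1$, which is automatic in the envisaged automorphic and Langlands-parameter applications. Then $R$ is a semisimple $\C$-algebra and the $\pi_1$-isotypic decomposition $W=\bigoplus_i V_i\otimes M_i$ identifies $R=\prod_i {\rm End}(M_i)$; the involution $*$ either stabilises a factor (when $V_i\cong V_i^\vee\otimes\lambda$, producing an orthogonal or symplectic involution on ${\rm End}(M_i)$) or interchanges two factors (when $V_i$ is paired with a distinct dual partner $V_j\cong V_i^\vee\otimes\lambda$). Blockwise the equation $c^*Cc=\mu I$ becomes the problem of identifying two non-degenerate bilinear forms of the same algebraic type --- symmetric, alternating, or a generic duality pairing --- on the multiplicity spaces $M_i$. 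Over $\C$ any two such non-degenerate forms are equivalent under $\GL(M_i)$, and independently rescaling each block (using that every element of $\C^\times$ is a square) allows a single similitude scalar $\mu$ to be realised simultaneously across all blocks. Assembling the blockwise solutions produces the desired $c\in R^\times$, and $Ac\in \GO(W)$ then conjugates $\pi_1$ to $\pi_2$.

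The step I expect to require the most care is the block-by-block analysis: reading off correctly from the self-duality properties of the irreducible constituents of $\pi_1$ whether $*$ restricts to an orthogonal, symplectic, or swap involution on each factor of $R$ --- which in turn dictates the algebraic type of the induced form on each $M_i$ --- and arranging a single similitude $\mu$ across all blocks at once. Once this bookkeeping is in place the argument is formal, depending only on the classification of non-degenerate bilinear forms over the algebraically closed field $\C$.
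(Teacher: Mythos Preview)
The paper does not actually prove this lemma; it is stated as a well-known result with a reference to Larsen \cite{larsen}, so there is no in-paper argument to compare your proposal against.

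Your reduction is correct and is the standard route to such statements: transporting $q$ through the $\GL(W)$-intertwiner $A$ produces a second $\pi_1$-invariant (up to similitude $\lambda$) form $q'$, and the problem becomes that of finding a ``square root'' of $C=q^{-1}q'$ inside the centralizer algebra $(R,*)$, up to a scalar. Your block-by-block analysis via the isotypic decomposition is also the right way to finish, and the bookkeeping you flag --- reading off whether $*$ is orthogonal, symplectic, or a swap on each simple factor of $R$, and arranging a common similitude scalar across blocks --- is routine over $\C$.

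The one genuine gap is that you impose semisimplicity of $\pi_1$, which is not part of the stated hypothesis. Without it the centralizer $R$ need not be a product of full matrix algebras, and your isotypic argument does not apply as written. You are right that semisimplicity holds in the applications the paper has in mind (parameters of Weil or Langlands type, and in particular the $4$-dimensional Asai lifts of $2$-dimensional representations that are actually used in Section~5), so your argument is adequate for the paper's purposes. If you want the lemma in the generality in which it is stated, you would need either to treat the non-semisimple case directly --- e.g.\ by a filtration/Jordan-block argument on $(R,*)$ --- or to invoke the full strength of the cited result of Larsen.
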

With these generalities in place, we now come to the special situation
afforded by 2 dimensional representations of a subgroup $N$ of index 2 in a 
group $G$. In this case, we find it more convenient  to use a concrete
realization of $\GO(4,\C)$, which we realize on the space $M(2,\C)$ of $2 \times 2$ matrices 
with $X \rightarrow \det X$ as the quadratic form. Clearly, $(A,B) \in \GL_2(\C) \times \GL_2(\C)$
acting on $M(2,\C)$ as $X \rightarrow A \cdot X \cdot {}^tB$ defines a mapping from $\GL_2(\C) \times \GL_2(\C)$
onto $\GSO(4,\C)$, and the involution  $X \rightarrow {}^tX$ belongs to ${\rm O}(4,\C)$ but not to $\SO(4,\C)$.
Thus, we have an exact sequence,

$$1 \rightarrow \C^\times \rightarrow [\GL_2(\C) \times \GL_2(\C)]\rtimes {\Z}/2
\rightarrow \GO(4,\C)\rightarrow 1,$$
where $\C^\times$  sits inside $\GL_2(\C) \times \GL_2(\C)$ as scalar matrices $(z,z^{-1})$.

As noted earlier, a representation $\pi_1$ of $N$ into $\GL_2(\C)$ gives rise to a 
homomorphism of $G$ into $[\GL_2(\C) \times \GL_2(\C)]\rtimes {\Z}/2$
whose projection to ${\Z}/2$ is nothing but the natural projection from $G$
to $G/N = \Z/2$. It will be convenient at this point to use the language
of cohomology of groups (with non-abelian coefficients). In this language,
we have an exact sequence of $G$-groups:
$$1 \rightarrow \C^\times \rightarrow \GL_2(\C) \times \GL_2(\C)
\rightarrow \GSO(4,\C)\rightarrow 1,$$
where $\C^\times$ is the $G$-module on which $N$ operates trivially, and
the nontrivial element of $G/N$ operates on $\C^\times$ by $z\rightarrow z^{-1}$. 
This exact sequence of $G$-modules gives rise to an exact sequence of 
pointed sets, which because $\C^\times$ is a central subgroup of
$\GL_2(\C) \times \GL_2(\C)$ 
(sitting as scalar matrices $(z,z^{-1})$),
is in fact an exact sequence of sets as can be easily seen (this is Proposition 42 in 
part I, $\S  5$ of Serre's book `Galois Cohomology'):
$$ H^1(G,\C^\times) \rightarrow H^1(G,\GL_2(\C) \times \GL_2(\C))
\rightarrow H^1(G,\GSO(4,\C)).$$
In terms of group cohomology, we have the identification,
$$\frac{\Hom[G,[\GL_2(\C) \times \GL_2(\C)]\rtimes {\Z}/2] }{\sim}
\longleftrightarrow H^1(G, \GL_2(\C) \times \GL_2(\C)),$$
where $\sim$ denotes the equivalence relation on the set of
homomorphisms given by conjugation by $\GL_2(\C) \times \GL_2(\C)$.

It follows that two homomorphisms $\phi_1$ and $\phi_2$ 
of $G$ to $(\GL_2(\C) \times \GL_2(\C))\rtimes {\Z}/2$ which give
rise to the same representation of $G$ with values in $\GO(4, \C)$
differ by an element of $H^1(G,\C^\times)$ which we calculate in 
the following lemma.

\begin{lem} Let $N$ be an index 2 subgroup of a group $G$. Let $\C^\times$ be the $G$ module on which $N$ operates trivially, and the non-trivial
element of $G/N$ acts on $\C^\times$ by $z\rightarrow z^{-1}$. Then $H^1(G,\C^\times)$ can be identified to those characters of $N$ with values
in $\C^\times$ whose transfer to $G$ is trivial.
\end{lem}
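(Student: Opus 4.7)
The plan is to make the cohomology explicit by parametrising $1$-cocycles, and then to compare the conditions that appear with the explicit formula for the transfer map when $[G:N]=2$.

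Fix an element $\sigma \in G \setminus N$, so $G = N \sqcup N\sigma$, and let $\epsilon:G \to \{\pm 1\}$ be the sign character of $G/N$. A $1$-cocycle for the given action is a map $f:G \to \C^\times$ satisfying $f(gh) = f(g)\,f(h)^{\epsilon(g)}$. First I would observe that such an $f$ is determined by the pair $(\chi,c)$, where $\chi := f|_N \in \Hom(N,\C^\times)$ and $c := f(\sigma) \in \C^\times$, via the rule $f(\sigma n) = c\,\chi(n)^{-1}$. Running through the four cases (both arguments in $N$, one in $N\sigma$, etc.) shows that the cocycle condition is equivalent to the two constraints
\[
\chi(\sigma n \sigma^{-1}) = \chi(n)^{-1} \text{ for all } n \in N, \qquad \chi(\sigma^2) = 1,
\]
while $c$ is completely free. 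In particular, $\chi$ is $\sigma$-invariant for the action on $\Hom(N,\C^\times)$ twisted by inversion on $\C^\times$.

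Next I would compute the coboundaries. A coboundary is $f(g) = z^{\epsilon(g)}/z$, so $f|_N \equiv 1$ and $f(\sigma) = z^{-2}$. Because $\C^\times$ is $2$-divisible, $z^{-2}$ ranges over all of $\C^\times$ as $z$ does, so modding out by coboundaries precisely kills the freedom of choosing $c$. It follows that restriction to $N$ gives a bijection
\[
H^1(G,\C^\times) \;\xrightarrow{\;\sim\;}\; \bigl\{\chi \in \Hom(N,\C^\times) : \chi(\sigma n \sigma^{-1}) = \chi(n)^{-1},\ \chi(\sigma^2) = 1\bigr\}.
\]

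Finally, I would match the two conditions on $\chi$ with the triviality of the transfer $V: G \to N^{ab}$. Using the coset representatives $\{1,\sigma\}$, the standard formula $V(g) = \prod_i n_i$ coming from $g\,g_i = g_{\pi(i)}n_i$ yields
\[
V(n) = n\cdot \sigma^{-1} n \sigma \ \text{ for } n \in N, \qquad V(\sigma n) = n \cdot \sigma n \sigma \ \text{ for } n \in N,
\]
and in particular $V(\sigma)=\sigma^2$. Therefore $\chi \circ V \equiv 1$ on $G$ if and only if $\chi(n)\,\chi(\sigma^{-1} n \sigma) = 1$ for all $n \in N$ and $\chi(\sigma^2)=1$, which is exactly the pair of conditions appearing above (after replacing $\sigma$ by $\sigma^{-1}$, which does not change the set of $\sigma$-antiinvariant characters). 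Combining the two displays identifies $H^1(G,\C^\times)$ with the kernel of the map $\Hom(N,\C^\times) \to \Hom(G,\C^\times)$ given by transfer, as claimed.

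The only subtle point is making sure that the ``value at $\sigma$'' really can be killed by a coboundary; this uses divisibility of $\C^\times$, and is the place where the lemma would fail if $\C^\times$ were replaced by a general abelian group. The rest is bookkeeping with the explicit cocycle/transfer formulas for an index $2$ subgroup.
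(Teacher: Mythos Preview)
Your argument is correct. You parametrise $1$-cocycles directly by pairs $(\chi,c)$, verify the cocycle identity case by case, and use $2$-divisibility of $\C^\times$ to kill the parameter $c$; the resulting two constraints on $\chi$ are then matched exactly with the explicit transfer formula for an index $2$ subgroup. All of this checks out.

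The paper takes a genuinely different route. It invokes the five-term inflation--restriction sequence
\[
0 \to H^1(G/N,\C^\times) \to H^1(G,\C^\times) \to H^1(N,\C^\times)^{G/N} \to H^2(G/N,\C^\times),
\]
computes $H^1(\Z/2,\C^\times)=0$ and $H^2(\Z/2,\C^\times)=\Z/2$ for this twisted action, identifies $H^1(N,\C^\times)^{G/N}$ with the characters of $N$ whose transfer is trivial on $N$, and then interprets the transgression map as the pushout of the extension $0\to N\to G\to\Z/2\to 0$ along $\chi$, showing that it vanishes exactly when $\chi(\sigma^2)=1$, i.e.\ when the transfer is trivial on the rest of $G$. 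Your approach is more elementary and entirely self-contained; the paper's approach is more structural, separating the two constraints conceptually (one coming from $G/N$-invariance, the other from the transgression) and making transparent where each piece of the transfer condition enters. Both reach the same conclusion, and your use of divisibility of $\C^\times$ plays the same role as the paper's computation $H^1(\Z/2,\C^\times)=0$.
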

\begin{proof}
Note the exact sequence,
$$ \begin{CD}0 @>>>  H^1(G/N,\C^\times)  @>>> H^1(G,\C^\times)  @>>> H^1(N,\C^\times)^{G/N}  @>>>  H^2(G/N,\C^\times) , \end{CD}$$
in which $G/N = \Z/2$.
From well-known calculations on cohomology of cyclic groups, it is easy to see that $H^1(\Z/2, \C^\times) = 0$, and $H^2(\Z/2, \C^\times) = \Z/2$. (In 
this lemma $\C^\times$ comes equipped with the action of $G/N=\Z/2$ by $z \rightarrow z^{-1}$.)
So, the above exact sequence can be written as:
$$ \begin{CD}0 @>>> H^1(G,\C^\times)  @>>> H^1(N,\C^\times)^{G/N}  @>>>  H^2(G/N,\C^\times) . \end{CD}$$
Since $N$ operates trivially on $\C^\times$, $H^1(N,\C^\times)$ is simply the character group of $N$. The group $G/N$ operates on $H^1(N,\C^\times)$
by sending a character $\phi \in H^1(N,\C^\times)$ to  the character $\phi^g(n) = g^{-1} \phi(gng^{-1})$ of $N$. It follows that 
$H^1(N,\C^\times)^{G/N}$ can be identified to  the group of  characters of $N$ for which $\phi^{-1}(n) = \phi(g_0ng^{-1}_0)$ where $g_0$ is any element of
$G$ not in $N$; these are simply the characters of $N$ which when composed with  the transfer map from 
$G/[G,G]$ to $ N/[N,N]$ are trivial on $N$. To get the conclusion of the lemma, we need to prove that among these characters of
$N$, those which go to 0 under the boundary map : $\begin{CD}H^1(N,\C^\times)^{G/N}  @>>>  H^2(G/N,\C^\times) ,\end{CD}$ are exactly those whose
transfer to $G$ is trivial (and not just restriction to the subgroup $N$ which is of index 2). Observe that the transfer map from $G$ to $N$ 
on elements of $G$ outside $N$ is simply the squaring map $g\rightarrow g^2$. So we need to prove that if a character $\phi$ 
in $H^1(N,\C^\times)^{G/N}$ goes to zero in $H^2(G/N,\C^\times)$, then $\phi(g_0^2)=1$ where $g_0$ is any element of $G$ not in $N$.
For this we need to interpret this boundary map, which is nothing but the push-out diagram
under the homomorphism $\phi: N \rightarrow \C^\times$ of the exact sequence: 
$0 \rightarrow  N \rightarrow G \rightarrow \Z/2 \rightarrow 0 $, thus fits in the following commutative diagram:

 $$
\begin{CD}
0 @>>>  N @>>> G  @>>> \Z/2 @>>> 0\\ 
& & @V  VV @VVV @V  VV \\
0 @>>> \C^\times  @>>> E  @>>> \Z/2 @>>> 0.
\end{CD}
$$

\vspace{4mm}

To say that the push-out diagram is trivial, i.e. the short exact sequence $$\begin{CD}0 @>>> \C^\times  @>>> E  @>>> \Z/2 @>>> 0 \end{CD}$$ splits, 
 is clearly equivalent to say that 
$\phi(g_0)^2=1$, so the proof of the lemma is completed.\end{proof}

In the following proposition, for a character
$\chi$ of $N$, let $r(\chi)$ be the character of 
 $G$ obtained from $\chi$ using the transfer map from
$G/[G,G]$ to $N/[N,N]$. (Note that $r(\chi)$ is the special case
of the tensor induction corresponding to one dimensional representations.) The previous lemma proves the following proposition which is the main result of
this section.

\begin{prop}\label{prop5.4}
Let $N$ be an index 2 subgroup of a group $G$, and let $\pi_1$ and $\pi_2$ 
be 2  two dimensional representations of $N$, with ${\rm As}(\pi_1)$ and ${\rm As}(\pi_2)$ 
of dimension 4 their tensor induction to $G$. Assume that 
$r(\det \pi_1) = r(\det \pi_2)$. Then ${\rm As}(\pi_1) \cong 
{\rm As}(\pi_2)$ if and only if $\pi_1 \cong \pi_2 \otimes \chi$, or  
$\pi_1^\sigma \cong \pi_2 \otimes \chi$, for a character
$\chi$ of $N$ with $r(\chi)=1$.
\end{prop}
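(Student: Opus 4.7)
The plan is to leverage the cohomological framework set up in Lemmas 5.1--5.3, combined with Lemma 5.2. A two-dimensional representation $\pi_i$ of $N$ gives, via Lemma 5.1, a homomorphism $\phi_i : G \to (\GL_2(\C) \times \GL_2(\C)) \rtimes \Z/2$ lifting the quotient $G \to G/N = \Z/2$, with $\phi_i|_N \simeq (\pi_i, \pi_i^\sigma)$. Postcomposing with the surjection onto $\GO(4,\C)$ produces ${\rm As}(\pi_i)$, whose similitude character is $r(\det \pi_i)$.

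First I would observe that the hypothesis $r(\det \pi_1) = r(\det \pi_2)$ ensures the two Asai lifts have matching similitude characters, while ${\rm As}(\pi_1) \cong {\rm As}(\pi_2)$ holds in $\GL_4(\C)$ by assumption; by Lemma 5.2 they are therefore conjugate already in $\GO(4,\C)$. I would then split into two cases according to whether the conjugating element lies in the connected component $\GSO(4,\C)$ or in the non-identity component, represented by the transpose involution $X \mapsto {}^tX$.

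In the $\GSO$-case, $\phi_1$ and $\phi_2$ determine the same class in $H^1(G, \GSO(4,\C))$. The exact sequence of pointed sets
$$H^1(G, \C^\times) \to H^1(G, \GL_2(\C) \times \GL_2(\C)) \to H^1(G, \GSO(4,\C))$$
(which, thanks to $\C^\times$ being central in $\GL_2(\C) \times \GL_2(\C)$, is genuinely exact as sets) then forces $\phi_2$ to differ from $\phi_1$ by a cocycle $c : G \to \C^\times \hookrightarrow \GL_2(\C) \times \GL_2(\C)$, $c(g) = (z(g), z(g)^{-1})$. Restricted to $N$ this cocycle is a character $\chi$ of $N$, and by Lemma 5.3 its class in $H^1(G, \C^\times)$ corresponds to $\chi$ having trivial transfer $r(\chi) = 1$. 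Multiplying $\phi_1|_N$ pointwise by $c|_N$ yields $\phi_2|_N = (\pi_1 \otimes \chi, \pi_1^\sigma \otimes \chi^{-1})$, and comparing with $(\pi_2, \pi_2^\sigma)$ gives $\pi_2 \cong \pi_1 \otimes \chi$. The $\GO \setminus \GSO$-case is reduced to the previous one after first conjugating $\phi_2$ by the transpose, whose pullback along $\GL_2(\C) \times \GL_2(\C) \to \GSO(4,\C)$ swaps the two factors; this produces $\pi_2 \cong \pi_1^\sigma \otimes \chi$ in the same manner.

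The converse is a short formal computation: tensor induction satisfies ${\rm As}(\pi \otimes \chi) \cong {\rm As}(\pi) \otimes r(\chi)$ and ${\rm As}(\pi^\sigma) \cong {\rm As}(\pi)$, so either of the stated relations together with $r(\chi) = 1$ yields ${\rm As}(\pi_1) \cong {\rm As}(\pi_2)$. The main obstacle I anticipate lies in the bookkeeping for the forward direction: tracing how the transpose involution on $\GO(4,\C)$ corresponds under the realization $W = M(2,\C)$ to the factor swap on $\GL_2(\C) \times \GL_2(\C)$, and verifying that the cocycle condition $\chi^\sigma = \chi^{-1}$ (which expresses that $c$ lands in the twisted $G$-module $\C^\times$) together with the vanishing of the obstruction in $H^2(G/N, \C^\times) = \Z/2$ from Lemma 5.3 is precisely the condition $r(\chi) = 1$.
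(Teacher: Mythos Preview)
Your proposal is correct and follows the same approach as the paper, which in fact gives no explicit proof of Proposition~5.4 beyond the sentence ``The previous lemma proves the following proposition.'' You have carefully unpacked what the paper leaves implicit: Lemma~5.2 gives conjugacy only in $\GO(4,\C)$, not in $\GSO(4,\C)$, so the exact sequence $H^1(G,\C^\times)\to H^1(G,\GL_2\times\GL_2)\to H^1(G,\GSO(4,\C))$ applies directly only after your case split, with the non-identity-component case producing the $\pi_1^\sigma$ alternative via the factor swap; your bookkeeping about the cocycle condition $\chi^\sigma=\chi^{-1}$ and the transfer condition $r(\chi)=1$ from Lemma~5.3 is exactly right.
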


\noindent{\bf Remark:} The abstract 
group theoretic proof given above for the fibers 
of the map $\pi \rightarrow {\rm As}(\pi)$, yields an exact description of the 
fibers of the Asai lift  from automorphic forms on 
$GL_2(\A_E)$ to automorphic forms on $\GL_4(\A_F)$  for CM 
representations of $\GL_2(\A_E)$. Luckily, non-CM 
representations were already handled by Krishnamurthy in \cite{muthu}, so this
description of the fibers holds in all cases.

\vspace{4mm}

The proof of the previous  proposition 
also gives a  proof of the following proposition 
which however we will not have occasion to use.

\begin{prop}\label{prop5.5} Let $V_1,V_2,W_1,W_2$ be two dimensional representations 
of a group $G$ such that 
$$V_1 \otimes V_2 \cong W_1 \otimes W_2,$$
and $$\det(V_1) \det(V_2) = \det (W_1) \det (W_2),$$
then there exists a character $\chi$ of $G$ such that
$$V_1 = \chi \otimes W_1, \,\,\, V_2 = \chi^{-1}\otimes W_2,$$
or,
$$V_2 = \chi \otimes W_1, \,\,\, V_1 = \chi^{-1} \otimes W_2.$$
\end{prop}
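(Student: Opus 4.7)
The plan is to adapt the non-abelian cohomology framework of Proposition \ref{prop5.4} to the case where the subgroup ``$N$'' is replaced by the ambient group $G$ itself, so there is no Galois twist and $H^1(G, \C^\times)$ reduces to $\Hom(G, \C^\times)$. The pair $(V_1, V_2)$ defines a homomorphism $\phi_V : G \to \GL_2(\C) \times \GL_2(\C)$, whose composition with the surjection $\GL_2(\C) \times \GL_2(\C) \to \GSO(4, \C)$ realizes $V_1 \otimes V_2$ with similitude character $\det V_1 \cdot \det V_2$. The pair $(W_1, W_2)$ gives an analogous $\phi_W$ realizing $W_1 \otimes W_2$ with similitude character $\det W_1 \cdot \det W_2$, which by hypothesis equals that of $\phi_V$.

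Since $V_1 \otimes V_2 \cong W_1 \otimes W_2$ in $\GL(4, \C)$ and the similitude characters agree, Lemma 5.2 provides a conjugacy between $\phi_V$ and $\phi_W$ taking place inside $\GO(4, \C)$. Two cases arise, according to whether the conjugating element lies in the identity component $\GSO(4, \C)$ or in the non-trivial coset $\GO(4, \C) \setminus \GSO(4, \C)$. In the realization on $M_2(\C)$ used in the paper, the non-trivial coset is represented by the transpose $X \mapsto X^t$, and a direct computation shows that conjugation by this transpose swaps the two factors of $\GL_2(\C) \times \GL_2(\C)$, sending $(A, B)$ to $(B, A)$. Hence, after replacing $(W_1, W_2)$ by $(W_2, W_1)$ if necessary, we reduce to the case in which $\phi_V$ and $\phi_W$ are conjugate inside $\GSO(4, \C)$; this dichotomy is precisely what produces the two alternatives in the conclusion.

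Having reduced to $\GSO(4, \C)$-equivalence, we invoke the central extension
$$1 \to \C^\times \to \GL_2(\C) \times \GL_2(\C) \to \GSO(4, \C) \to 1,$$
in which $\C^\times$ embeds as the pairs $(z, z^{-1})$, and which yields the associated sequence
$$H^1(G, \C^\times) \to H^1(G, \GL_2(\C) \times \GL_2(\C)) \to H^1(G, \GSO(4, \C))$$
of pointed sets. Because $\C^\times$ is central, this is in fact an exact sequence of sets (Serre, \emph{Galois Cohomology}, Part I, \S 5, Proposition 42), exactly as invoked in the proof of Proposition \ref{prop5.4}. Since $G$ now acts trivially on $\C^\times$, we have $H^1(G, \C^\times) = \Hom(G, \C^\times)$, the character group of $G$. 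Because $\phi_V$ and $\phi_W$ map to the same class in $H^1(G, \GSO(4, \C))$, they differ by a character $\chi \in \Hom(G, \C^\times)$, whose action on $(W_1, W_2)$ is by the pair $(\chi, \chi^{-1})$. Translating back, this yields $V_1 \cong \chi \otimes W_1$ and $V_2 \cong \chi^{-1} \otimes W_2$, which together with the swapped case gives the two alternatives in the statement.

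The most delicate point, and the one I would want to verify with care, is the identification in the $M_2(\C)$ model of the transpose involution with the factor-swap on $\GL_2(\C) \times \GL_2(\C)$, as this is what guarantees that the $\GO$-versus-$\GSO$ dichotomy corresponds exactly to the two alternatives in the conclusion rather than to some independent obstruction. Everything else is a direct transcription of the cohomological argument of Proposition \ref{prop5.4} with the trivial cyclic action in place of the non-trivial one, and the determinant hypothesis $\det V_1 \cdot \det V_2 = \det W_1 \cdot \det W_2$ is exactly what makes Lemma 5.2 applicable at the outset to pass from $\GL(4)$-equivalence to $\GO(4)$-equivalence.
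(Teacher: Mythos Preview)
Your proof is correct and follows exactly the approach the paper intends: the text simply asserts that ``the proof of the previous proposition also gives a proof'' of Proposition~\ref{prop5.5}, and you have spelled out precisely that, running the same cohomological argument with the trivial $G$-action on $\C^\times$ (so $H^1(G,\C^\times)=\Hom(G,\C^\times)$) in place of Lemma~5.3, and handling the $\GO(4)$ versus $\GSO(4)$ dichotomy via the transpose involution to produce the swap $(W_1,W_2)\leftrightarrow(W_2,W_1)$. Your verification that transpose conjugation sends $(A,B)$ to $(B,A)$ in the $M_2(\C)$ model is correct, so the point you flagged as most delicate is fine.
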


\noindent{\bf Remark:} A weaker version of this proposition was proved in \cite{kumar}
in which $V_1$ and $V_2$ were assumed to be non-CM representations, which 
went into the proof of \cite{muthu}.

\vspace{4mm}

\noindent{\bf Question:} Since $(U_1 \otimes U_2) \otimes U_3 \cong U_1 \otimes (U_2 \otimes U_3)$,
there is no simple way to generalize the previous proposition for larger dimensional representations,
except possibly when, in the notation of the proposition, $\dim V_1$ and $\dim V_2$ are prime. Similarly, since ${\rm As}(U_1\otimes U_2^\sigma) 
\cong {\rm As}(U_1 \otimes U_2)$, there is no simple generalization of the proposition
 about fibers of the
Asai lift of two dimensional representations 
except possibly when dealing with representations of prime dimension. We do not know if in these special
cases in which representations involved are of prime dimension, fibers of Asai lift or of tensor product are as described in propositions \ref{prop5.4} and \ref{prop5.5}.

\section{Local-global principle for the pair $(\SL_2(E),\SL_2(F))$}

In this section, we  work inside an $L$-packet 
to prove  the local-global principle for automorphic 
representations  of $\SL_2(\A_E)$ with respect to $\SL_2(\A_F)$, i.e. 
Theorem 1.2 recalled here for the convenience of the reader. 

\begin{theorem}\label{2}
Let $\Pi$ be a cuspidal representation of $\SL_2({\mathbb A}_E)$ which is 
globally distinguished by $\SL_2({\mathbb A}_F)$. 
 Let $\Pi'=\otimes_v \Pi'_v$ be an automorphic 
representation of $\SL_2({\mathbb A}_E)$ in the same $L$-packet as $\Pi$ 
such that $\Pi'_v$ is locally 
distinguished by $\SL_2(F_v)$ at all the places of $F$.  Then $\Pi'$
is globally distinguished. 
\end{theorem}

Before we begin the proof of this theorem, we make a review of the theory of $L$-packets, both locally
as well as globally for $\SL_2$, which is due to Labesse-Langlands \cite{labesse}, and 
also review some of our own work, cf. \cite{anand1,anand2} 
about distinguished representations, relevant for this study.

We deal with the pair 
$(\SL_2(\A_E),\SL_2(\A_F))$ in this section, making an essential use of the 
theory of Whittaker
models,
and then in a later section observe that some of our work
carries over to the more general situation of  the group of 
norm one elements of a quaternion algebra.

Note that the group $\A_E^\times$ sitting inside $\GL_2(\A_E)$ as 
$$\left (\begin{array}{cc} x & 0 \\ 0 & 1\end{array} \right ),$$
operates on $\SL_2(\A_E)$ via conjugation action, and therefore on the 
set of isomorphism classes of representations of $\SL_2(\A_E)$. 
The orbit 
of $\Pi= \otimes \Pi_v$, an irreducible representation of $\SL_2(\A_E)$,  
under the action of $\A_E^\times$ is precisely the  global 
$L$-packet of representations of $\SL_2(\A_E)$ containing $\Pi$. Let
$G_\Pi \subset \A_E^\times $,  $G_\Pi= \prod G_v$ be the stabilizer of the isomorphism class of the 
representation $\Pi 
=\otimes \Pi_v$, where $G_v$ is the stabilizer inside $E_v^\times$ 
of the isomorphism class of the representation $\Pi_v$ of $\SL_2(E_v)$. 
It can be seen that $G_v$ 
contains ${\mathcal O}_{v}^\times$ for almost all primes $v$ of $E$, where
${\mathcal O}_v$ is the ring of integers of $E_v$, 
and so $G_\Pi$ is an open (and hence closed) 
subgroup of  $\A_E^\times$. 

Clearly, the action of $E^\times$ on $\SL_2(\A_E)$ 
takes  automorphic representations of $\SL_2(\A_E)$ to automorphic representations of $\SL_2(\A_E)$. 
Since every automorphic representation of $\SL_2(\A_E)$ 
must have a Whittaker model for a nontrivial 
character of $\A_E/E$, and any two  nontrivial characters
of $\A_E/E$ are conjugate by $E^\times$, it follows 
from the uniqueness of Whittaker models (for $\GL_2(\A_E)$!) that $E^\times$ acts transitively on the set of automorphic 
representations of $\SL_2(\A_E)$ which are in the same $L$-packet as $\Pi$. 

There is another way of interpreting $G_\Pi = \prod G_v$. For this, 
let $\widetilde{\Pi}$ be an automorphic representation of $\GL_2(\A_E)$ containing $\Pi$. 
Then, for a character
$\omega: \A_E^\times \rightarrow \C^\times$, $\widetilde{\Pi} \otimes \omega \cong \widetilde{\Pi}$ 
if and only if $\omega$ is trivial on $G_\Pi$. This implies  that 
$\A_E^\times/(E^\times G_\Pi)$ is a finite group whose character group
is nothing but the finite group of Gr\"ossencharacters $\omega$ of 
$\A_E^\times/E^\times$ such that $\widetilde{\Pi} \otimes \omega \cong \widetilde{\Pi}$.

From the previous observations, we note that a  representation 
of $\SL_2(\A_E)$ which belongs to the $L$-packet determined by $\Pi$
determines an element of the finite group $\A_E^\times/(E^\times G_\Pi)$ (which 
is known to be either $\{1\}, \Z/2,$ or $\Z/2 \oplus \Z/2$),
which is trivial if and only if the corresponding representation 
is automorphic. This result  due to Labesse and Langlands \cite{labesse}
remains true for division algebras, but this simple proof does not work.

We next review the work in \cite{anand1,anand2} relevant for the local-global study
of the pair $(\SL_2(\A_E),\SL_2(\A_F))$. 

It follows from Theorem 4.2 of \cite{anand2} that if $\Pi$ is globally distinguished 
by $\SL_2(\A_F)$, then $\Pi$ has a Whittaker model with respect to a character
$\psi: \A_E/(E\A_F) \rightarrow \C^\times$.
Conversely, by the same theorem of \cite{anand2} if 
$\Pi$ has Whittaker model with respect to $\psi : \A_E/(E\A_F) \rightarrow \C^\times$, 
and some
member in the $L$-packet determined by $\Pi$  is globally distinguished, then
such a $\Pi$ is itself globally distinguished. 
A similar local result also holds: in a local $L$-packet of $\SL_2(E_v)$ 
where at least one representation is distinguished by $\SL_2(F_v)$, the $\SL_2(F_v)$-distinguished representations are 
precisely those which have a Whittaker model with respect to a nontrivial character of $E_v/F_v$ (this follows from Lemma 3.1 and 
Lemma 3.2 of \cite{anand1}), hence since $F_v^\times$ 
acts transitively on non-trivial characters of $E_v/F_v$, $F_v^\times$ acts transitively 
on distinguished members of an $L$-packet of representations of $\SL_2(E_v)$. 

Define groups, 
\begin{eqnarray*}
H_0&=& \A_E^\times, \\
H_1 & = & \A_F^\times G_\Pi, \\
H_2 & =& E^\times G_{\Pi}, \\
H_3 & = &  F^\times G_{\Pi}.
\end{eqnarray*}

From  these theorems due to Labesse-Langlands \cite{labesse}, and the theorems due to the authors in
\cite{anand1}, \cite{anand2} we deduce the following:

\begin{enumerate}
\item The set $H_0 \cdot \Pi$ is 
the set of $L$-packet of representations of $\SL_2(\A_E)$ determined by $\Pi$.

\item The set $H_1 \cdot \Pi$ is the set of locally distinguished representations
in the $L$-packet of $\SL_2(\A_E)$ determined by $\Pi$.

\item The set $H_2 \cdot \Pi$ is the set of automorphic representations
in the $L$-packet of $\SL_2(\A_E)$ determined by $\Pi$.

\item The set $H_3 \cdot \Pi$ is the set of globally distinguished representations
in the $L$-packet of $\SL_2(\A_E)$ determined by $\Pi$.
\end{enumerate}

Clearly $H_1 \cap H_2$ contains $H_3$, and  $ (H_1 \cap H_2)/ H_3$ 
measures the obstruction to locally distinguished automorphic representations 
to be globally distinguished; equivalently, a locally distinguished
automorphic representation $\Pi$ of $\SL_2(\A_E)$ determines an element 
$h_\Pi$ of $H_1 \cap H_2$ such that $\Pi$ is globally distinguished if and 
only if $h_\Pi \in H_3$.
We will in fact prove that $(H_1\cap H_2)/H_3$ is trivial 
by proving that its character group is trivial. 

Let $X(A)$ denote the character group of a locally compact abelian group $A$.

Noting that 
$(H_1\cap H_2)/H_3$ 
is nothing but the kernel of the map,
$$H_1/H_3 \rightarrow H_0/H_2,$$
the character group of $(H_1\cap H_2)/H_3$ is the cokernel of
 the natural map $$X(H_0/H_2) \rightarrow X(H_1/H_3).$$
We note that the mapping of the character groups is 
simply the map taking
a character $\alpha$ of $H_0$ which is trivial on $H_2$ to 
its restriction to $H_1$; note that since $\alpha$ is trivial 
on $H_2$, it is in  particular trivial on $H_3$ which is a subgroup of $H_2$.

\begin{theorem}\label{thm3.1}
The group 
$(H_1\cap H_2)/H_3$ 
which measures the difference between locally 
distinguished automorphic representations of $\SL_2(\A_E)$ and
globally distinguished automorphic representations of $\SL_2(\A_E)$
is trivial. 
\end{theorem}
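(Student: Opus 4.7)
The plan is to reduce the triviality of $(H_1 \cap H_2)/H_3$ to a surjectivity statement on character groups via Pontryagin duality, and then exploit the Asai lift together with the global distinction of $\Pi$, relying crucially on the fiber description in Proposition \ref{prop5.4}.

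The quotient $(H_1 \cap H_2)/H_3$ is the kernel of the canonical map $H_1/H_3 \to H_0/H_2$, and it is finite since it embeds into the finite group $H_2/H_3 \cong E^\times/F^\times(E^\times \cap G_\Pi)$. Its Pontryagin dual is accordingly the cokernel of the restriction
\[
\rho : X(H_0/H_2) \longrightarrow X(H_1/H_3), \qquad \omega \mapsto \omega\big|_{\A_F^\times}.
\]
By the identifications recorded just above, $X(H_0/H_2)$ is the finite self-twist group $\{\omega : \widetilde\Pi \otimes \omega \cong \widetilde\Pi\}$ of Gr\"ossencharacters of $E$, and $X(H_1/H_3)$ is the group of Gr\"ossencharacters $\eta$ of $F$ trivial on the open subgroup $\A_F^\times \cap G_\Pi$. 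Thus the theorem reduces to showing that every such $\eta$ lies in the image of $\rho$.

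To prove this surjectivity I would invoke the hypothesis that $\Pi$ is globally distinguished, which via Section~2 provides a Gr\"ossencharacter $\eta_0$ of $F$ such that $\widetilde\Pi$ is $\eta_0$-distinguished by $\GL_2(\A_F)$; equivalently, $\eta_0$ appears as an isobaric summand of the automorphic $\GL_4(\A_F)$-representation ${\rm As}(\widetilde\Pi)$. Proposition \ref{prop5.4}, giving the exact fibers of the Asai lift, implies that the set of Gr\"ossencharacters of $F$ appearing in ${\rm As}(\widetilde\Pi)$ is exactly $\eta_0 \cdot \rho(X(H_0/H_2))$, once the $\sigma$-swap alternative in Proposition \ref{prop5.4} is excluded (which one can do by comparing $\widetilde\Pi_v$ and $\widetilde\Pi_v^\sigma$ at a suitably generic place). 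Separately, using the local characterization of distinction from Section~3 together with Ramakrishnan's strong multiplicity one for $\GL_2(\A_E)$, each $\eta_0 \cdot \eta$ with $\eta \in X(H_1/H_3)$ would be realized as a Gr\"ossencharacter summand of ${\rm As}(\widetilde\Pi)$. Combining the two descriptions yields $\rho(X(H_0/H_2)) = X(H_1/H_3)$, the desired surjectivity.

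The principal obstacle is the second half of the preceding paragraph: realizing every $\eta \in X(H_1/H_3)$ as arising from a Gr\"ossencharacter summand of ${\rm As}(\widetilde\Pi)$. This is a local-to-global matching for Whittaker and distinction data, where the sharpness of Proposition \ref{prop5.4} is essential --- any coarser fiber description would leave ambiguity between the self-twist branch and the $\sigma$-swap branch of the Asai fiber, and hence between genuine Gr\"ossencharacter summands and spurious ones.
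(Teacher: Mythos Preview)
Your reduction via Pontryagin duality to the surjectivity of $\rho : X(H_0/H_2) \to X(H_1/H_3)$ is correct and is exactly how the paper begins. The divergence, and the gap, is in how you attempt the surjectivity.

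You try to identify both the source and target of $\rho$ with (a translate of) the set of Gr\"ossencharacter summands of ${\rm As}(\widetilde\Pi)$. There are two problems. First, your proposed exclusion of the $\sigma$-swap branch of Proposition~\ref{prop5.4} ``by comparing $\widetilde\Pi_v$ and $\widetilde\Pi_v^\sigma$ at a suitably generic place'' cannot work: the standing hypothesis is that $\widetilde\Pi$ is globally distinguished, hence $\widetilde\Pi^\sigma \cong \widetilde\Pi^\vee$ at \emph{every} place, so no place separates the two branches. In the paper's argument the $\sigma$-swap case is not excluded at all; it is handled by invoking $\widetilde\Pi^\sigma \cong \widetilde\Pi^\vee$ and $\omega_{\widetilde\Pi}|_{\A_F^\times}=1$, which converts the swap into an ordinary self-twist. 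Second, what you call ``the principal obstacle'' --- showing that every $\eta \in X(H_1/H_3)$ actually arises from a summand of ${\rm As}(\widetilde\Pi)$ --- is left open; Proposition~\ref{prop5.4} concerns fibers of $\pi \mapsto {\rm As}(\pi)$, not the isobaric constituents of a fixed ${\rm As}(\widetilde\Pi)$, so it does not by itself deliver this.

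The paper circumvents both issues by a direct construction. Given $\chi \in X(H_1/H_3)$, first extend $\chi$ to a (not necessarily automorphic) character $\widetilde\chi$ of $\A_E^\times$ trivial on $G_\Pi$; then $\widetilde\Pi \otimes \widetilde\chi \cong \widetilde\Pi$ abstractly, hence ${\rm As}(\widetilde\Pi) \cong {\rm As}(\widetilde\Pi) \otimes \chi$. Next extend $\chi$ to a genuine Gr\"ossencharacter $\widehat\chi$ of $\A_E^\times/E^\times$; then ${\rm As}(\widetilde\Pi \otimes \widehat\chi) = {\rm As}(\widetilde\Pi)\otimes \chi \cong {\rm As}(\widetilde\Pi)$, and now Proposition~\ref{prop5.4} applies to the pair of \emph{automorphic} representations $\widetilde\Pi$ and $\widetilde\Pi \otimes \widehat\chi$. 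In either branch of its conclusion one reads off a Gr\"ossencharacter self-twist of $\widetilde\Pi$ restricting to $\chi$ on $\A_F^\times$ (using $\chi^2=1$, $\omega_{\widetilde\Pi}|_{\A_F^\times}=1$, and $\widetilde\Pi^\sigma \cong \widetilde\Pi^\vee$ in the swap branch). This is the missing idea in your proposal: rather than trying to describe the summands of ${\rm As}(\widetilde\Pi)$, one compares two automorphic representations with equal Asai lift and lets Proposition~\ref{prop5.4} produce the required Gr\"ossencharacter directly.
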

\begin{proof}
We will prove that $(H_1\cap H_2)/H_3$ is trivial by proving that its character
group is trivial. From the analysis above, it suffices to prove
 the surjectivity of the natural map $$X(H_0/H_2) \rightarrow X(H_1/H_3).$$
Equivalently, we need to prove that 
 a character of $\A_F^\times/[F^\times (\A_F^\times \cap G_\Pi)]$, can be 
extended to a Gr\"ossencharacter of $\A_E^\times$ which is a self-twist
of $\widetilde{\Pi}$.

Let $\chi$ be a character of $\A_F^\times/[F^\times (\A_F^\times \cap G_\Pi)]$,
thought of as a character of $\A_F^\times/[\A_F^\times \cap G_\Pi]$.
Since $\A_F^\times/[\A_F^\times \cap G_\Pi]$ 
is a subgroup of the discrete group $\A_E^\times/G_\Pi$, there is a 
character $\widetilde{\chi}$ of $\A_E^\times$ trivial on $G_\Pi$ extending 
$\chi$. (We will eventually try to get one which is a Gr\"ossencharacter.)

Let ${\rm As}(\widetilde{\Pi})$ denote the Asai lift of a representation of $\GL_2(\A_E)$ to
$\GL_4(\A_F)$. By local considerations, it is clear that 
$${\rm As}(\widetilde{\Pi} \otimes \widetilde{\chi}) = {\rm As}(\widetilde{\Pi}) \otimes \chi.$$
Since $\widetilde{\chi}$ is trivial on $G_\Pi$, $\widetilde{\Pi} \otimes \widetilde{\chi} \cong 
\widetilde{\Pi}$, and hence 
$${\rm As}(\widetilde{\Pi} \otimes \widetilde{\chi})={\rm As}(\widetilde{\Pi}) =  {\rm As}(\widetilde{\Pi}) \otimes \chi.$$

Now let $\widehat{\chi}$ be a character of 
$\A_E^\times/E^\times$ extending the character $\chi$ of $\A_F^\times/F^\times$. 
We have,
\begin{eqnarray*} {\rm As}(\widetilde{\Pi}\otimes \widehat{\chi}) & \cong & {\rm As}(\widetilde{\Pi}) \otimes \chi \\
& \cong & {\rm As}(\widetilde{\Pi}).
\end{eqnarray*}

This implies that the Asai lifts of the two automorphic representations 
$\widetilde{\Pi}$ and $\widetilde{\Pi} \otimes \widehat{\chi}$ 
of $\GL_2(\A_E)$ to
$\GL_4(\A_F)$ are the same. Therefore we can use Proposition 5.4 to conclude a relationship between 
$\widetilde{\Pi}$ and $\widetilde{\Pi} \otimes \widehat{\chi}$.
Before we can apply this proposition, we need to check that the two representations 
$\widetilde{\Pi}$ and $\widetilde{\Pi} \otimes \widehat{\chi}$ have the same central characters (or the
determinants of the corresponding representations of $W_E$) 
restricted to $\A_F^\times$. But this follows as $\A_E^{\times 2} \subset G_\Pi$, and hence $\chi^2=1$.

By Proposition \ref{prop5.4}, there are two cases. 

{\bf Case 1:} 
There is a character $\chi_1$ of $\A_E^\times/E^\times$ trivial on $\A_F^\times/F^\times$ such that 
$$\widetilde{\Pi} \otimes \widehat{\chi} \cong \widetilde{\Pi} \otimes \chi_1.$$
Therefore, $\widetilde{\Pi} \cong \widetilde{\Pi} \otimes (\chi^{-1}_1\widehat{\chi})$. Since 
$\chi_1$ is trivial on $\A_F^\times/F^\times$, the character
$ \chi^{-1}_1\widehat{\chi}$ is an extension of $\chi$ to a Gr\"ossencharacter
on $\A_E^\times/E^\times$ which is a self-twist of $\widetilde{\Pi}$, proving the desired 
statement in this case. 

{\bf Case 2:} 
There is a character $\chi_1$ of $\A_E^\times/E^\times$ trivial on $\A_F^\times/F^\times$ such that 
\begin{eqnarray*} \widetilde{\Pi}\otimes \widehat{\chi} & \cong & \widetilde{\Pi}^\sigma \otimes \chi_1 \\
& \cong & \widetilde{\Pi}^\vee \otimes \chi_1 \\
& \cong & \widetilde{\Pi} \otimes (\chi_1\omega_{\widetilde{\Pi}}^{-1}),
\end{eqnarray*}
which again proves the desired statement since $\omega_{\tilde{\Pi}}$ restricted to $\A_F^\times$ is trivial.
 \end{proof}

\vspace{2mm}
\noindent{\bf Remark:} Although Asai lift naturally comes up in questions about 
distinguished representations for the pair $(\GL_2(\A_E),\GL_2(\A_F))$, its use in the
previous theorem is for an entirely different purpose: to prove that a certain
character of $\A_E^\times$ can be assumed to be a Gr\"ossencharacter when its restriction
to $\A_F^\times/F^\times$ is known to be a Gr\"ossencharacter. In this, the crucial property
of the Asai lift used is the fact that ${\rm As}(\Pi \otimes \chi) = {\rm As}(\Pi) \otimes 
\chi|_{\A_F^\times}$, so even if $\chi$ is not a Gr\"ossencharacter, since its
restriction to $\A_F^\times$ is, the Asai lift is an automorphic representation. 
This is then
combined with the knowledge about fibers of the Asai lift  to conclude 
that $\chi$, or a variant of it, is automorphic. Later when we deal with toric period integrals, 
we will use very similar arguments, using basechange map, for similar effect, which 
though does appear in toric period questions, is put to an unrelated use!

\section{Examples}
It may be useful to list all the possibilities for the groups which appear
in the previous section enumerated, which we do here.

According to the notation introduced in \cite{anand1}, \cite{anand2}, and 
the proof of the previous theorem, we have,
$$\begin{array}{rclcl}
X(H_1/H_3) & \subset & X_{\widetilde{\Pi}} & = & \left \{\chi \in \widehat{\A_F^\times/F^\times}| \widetilde{\Pi} 
{\rm ~~is~~} \chi-{\rm distinguished } \right \} \\
X(H_0/H_1H_2) & = & Y_{\widetilde{\Pi}} & = & \left \{\chi \in \widehat{\A_E^\times/E^\times}| \widetilde{\Pi}
\otimes \chi \cong \widetilde{\Pi}, \chi|_{\A_F^\times}=1 \right \} \\
X(H_0/H_2) & = & Z_{\widetilde{\Pi}} & = & \left \{\chi \in \widehat{\A_E^\times/E^\times}| \widetilde{\Pi}
\otimes \chi \cong \widetilde{\Pi} \right \}. 
\end{array}$$
Further, there is an isomorphism of groups $X_{\widetilde{\Pi}} 
\cong Y_{\widetilde{\Pi}}$. 

We  now enumerate all the possibilities for the groups 
$X_{\widetilde{\Pi}},Y_{\widetilde{\Pi}},Z_{\widetilde{\Pi}}$, 
and refer the reader to the proof of Theorem 6.9 in \cite{anand2}.

\begin{enumerate}
\item $\widetilde{\Pi}$ is not CM. In this case, $X_{\widetilde{\Pi}}=Y_{\widetilde{\Pi}} = Z_{\widetilde{\Pi}}= \{1\}$.

\item $\widetilde{\Pi}$ is CM by exactly one quadratic extension of $E$. In this case,
$X_{\widetilde{\Pi}}=Y_{\widetilde{\Pi}} = Z_{\widetilde{\Pi}}= \Z/2$, and therefore,
$$\frac{X_{\widetilde{\Pi}}}{Z_{\widetilde{\Pi}}/Y_{\widetilde{\Pi}}} = \Z/2.$$

\item $\widetilde{\Pi}$ is CM by three quadratic extensions of $E$, with exactly one
 Galois over $F$.
In this case,
$X_{\widetilde{\Pi}}=Y_{\widetilde{\Pi}}= \Z/2$, and $Z_{\widetilde{\Pi}} = \Z/2\oplus \Z/2$ and therefore,
$$\frac{X_{\widetilde{\Pi}}}{Z_{\widetilde{\Pi}}/Y_{\widetilde{\Pi}}} = \{e\}.$$

\item $\widetilde{\Pi}$ is CM by three quadratic extensions of $E$, all Galois over $F$.
In this case,
$X_{\widetilde{\Pi}}=Y_{\widetilde{\Pi}} = Z_{\widetilde{\Pi}}= \Z/2\oplus \Z/2$, and therefore,
$$\frac{X_{\widetilde{\Pi}}}{Z_{\widetilde{\Pi}}/Y_{\widetilde{\Pi}}} = \Z/2 \oplus \Z/2.$$

\end{enumerate}

\section{A more general situation}
In the context of distinction for the pair $(\GL_2(E),\GL_2(F))$, the most 
general pair of this kind that one could 
consider is $(\GL_1(D)(E), \GL_1(D)(F))$ where $D$ is a quaternion algebra over a 
number field $F$, and $E$ is a quadratic extension of $F$. In fact it was 
in the 
study of distinction property for this pair that relative trace formula 
was inaugurated by H. Jacquet in collaboration with K. Lai in \cite{lai} who dealt 
with only those quaternion algebras $D$ over $F$ for which 
$D\otimes_FE \cong  \M_2(E)$; the more general situation was considered in the
paper \cite{flicker3}. These papers prove that a cuspidal representation
$\Pi$ of $\GL_1(D)(\A_E)$ is globally distinguished by $\GL_1(D)(\A_F)$ 
if and only if $\Pi^{JL}$, the Jacquet-Langlands lift of $\Pi$ to
$\GL_2(\A_E)$,  is globally distinguished by $\GL_2(\A_F)$, together 
with the necessary local conditions at places of $F$ where $D$ is ramified, $E$ in inert, 
and $\Pi$ is a principal series representation. Thus 
distinction for these pairs also is dictated by the existence of 
a pole at $s=1$ of the Asai L-function.

Our work in the previous two sections  for the pair $(\SL_2(E),\SL_2(F))$ 
 was a consequence of this characterization
of distinction for $\GL_2(E)$ representations in terms of Asai $L$-function, and
an input on distinction for the pair $(\SL_2(E),\SL_2(F))$ in terms of
Whittaker model with respect to a character of $E$ trivial on $F$ which was
proved in  \cite{anand1} in the local case, and \cite{anand2} in the global case. 

In this section we consider the distinction property for the pair $(\SL_1(D)(\A_E), \SL_1(D)(\A_F))$ where $D$ is a quaternion algebra over a 
number field $F$, and $E$ is a quadratic extension of $F$. At places of $F$ where $E$ is inert  and $D$ is ramified, we will be dealing with
distinction properties for the pair $(\SL_2(E_v), \SL_1(D_v))$;  an added subtlety here is that the embedding  of $\SL_1(D_v)$ 
in $\SL_2(E_v)$ is unique only up to conjugation by $\GL_2(E_v)$, and there seems no preferred embedding of
$\SL_1(D_v)$ in $\SL_2(E_v)$. Thus one must keep in mind that the question about classifying representations of $\SL_2(E_v)$ distinguished by
$\SL_1(D_v)$ is meaningless unless there is a way of fixing an embedding of $\SL_1(D_v)$ inside $\SL_2(E_v)$. 

Recall that for the pair, $(\SL_2(\A_E),\SL_2(\A_F))$ our proof of the local-global
property depended on defining the groups, 
\begin{eqnarray*}
H_0 &=& \A_E^\times, \\
H_1 & = & \A_F^\times G_\pi, \\
H_2 & =& E^\times G_{\pi}, \\
H_3 & = &  F^\times G_{\pi},
\end{eqnarray*}
and noting the following:

\begin{enumerate}
\item The set $H_0 \cdot \pi$ is 
the set of $L$-packet of representations of $\SL_2(\A_E)$ determined by $\pi$.

\item The set $H_1 \cdot \pi$ is the set of locally distinguished representations
in the $L$-packet of $\SL_2(\A_E)$ determined by $\pi$.

\item The set $H_2 \cdot \pi$ is the set of automorphic representations
in the $L$-packet of $\SL_2(\A_E)$ determined by $\pi$.

\item The set $H_3 \cdot \pi$ is the set of globally distinguished representations
in the $L$-packet of $\SL_2(\A_E)$ determined by $\pi$.
\end{enumerate}

Then we proved, via considerations with the Asai lift, specially determination of the fibers
of the Asai lift, that $(H_1 \cap H_2)/H_3 = 1$,
for which we did not need the interpretation of 
$H_3 \cdot \pi$ as the set of globally distinguished representations
in the $L$-packet of $\SL_2(\A_E)$ determined by $\pi$; we only needed to know that members of $H_3 \cdot \pi$ are globally distinguished.

The groups $H_0,H_1,H_2,H_3$ were defined in the context of $(\SL_2({\mathbb A}_E),\SL_2({\mathbb A}_F))$ using the embedding of 
$E^\times$, or of $\A_E^\times$,  inside $\GL_2(\A_E)$ as the group of diagonal matrices:
$$\left (\begin{array}{cc} x & 0 \\ 0 & 1\end{array} \right ).$$
When dealing with $\SL_1(D)$, this diagonal subgroup does not make sense, but we can instead replace these by the image of
$D^\times(E)$, resp. $D^\times(E_v)$, resp. $D^\times(\A_E)$  in $E^\times$, resp. $E_v^\times$, resp $\A_E^\times$, via the reduced norm mapping. The group 
$G_\pi$ itself may be defined as the image of the reduced norm mapping of the
stabiliser in $D^\times(\A_E)$ of a representation $\pi$ of $\SL_1(D)(\A_E)$.  
Thus $H_0$  is  the image of $D^\times(\A_E)$
in $\A_E^\times$ under the reduced norm mapping, or rather this multiplied by $G_\pi$ inside $\A_E^\times$; similarly, 
$H_1 =\N m (D^\times(\A_F)) \cdot G_\pi \subset \A_E^\times$, and
$H_2 =\N m (D^\times(E)) \cdot G_\pi \subset \A_E^\times$.

In order to analyse the local-global question for $(\SL_1(D)(\A_E),\SL_1(D)(\A_F))$ inside an $L$-packet containing 
 a globally distinguished representation, we can adopt more or less the same strategy. But we see from Lemma \ref{lem7.1} below that $H_1 \cdot \pi$ does not capture all the locally distinguished representations, and thus we cannot proceed along exactly the same lines. However, we note that
$(H_1 \cap H_2)/H_3 = 1$, proves that locally distinguished representations of $(\SL_1(D)(\A_E),\SL_1(D)(\A_F))$ appearing in the restriction of a globally distinguished representation of 
\[D^+({\mathbb A}_E) = \{g \in D({\mathbb A}_E) | \det g \in {\mathbb A}_F^\times{\mathbb A}_E^{\times2}\}\]
are globally distinguished.

Let $\pi^+$ be an automorphic representation of $D^+({\mathbb A}_E)$ which is globally distinguished with respect to $\SL_1(D)(\A_F)$. Let $\pi$ be an automorphic representation of  $\SL_1(D)(\A_E)$ which comes in the restriction of $\pi^+$. Observe that:

\begin{enumerate}
\item The set $H_1 \cdot \pi$ are all the irreducible components of the restriction of $\pi^+$ to $\SL_1(D)(\A_E)$.
\item $H_2 \cdot \pi$ are the automorphic representations belonging to the $L$-packet of $\SL_1(D)(\A_E)$ determined by $\pi$.
\item $(H_1 \cap H_2)/H_3 = 1$.
\end{enumerate}


The statement (ii) is part of the work of Labesse-Langlands mentioned earlier too.

The proof of (iii) follows the same lines as given earlier for $\SL_2(\A_E)$ using the fibers of the Asai lift which this time can be 
considered to be lifting of automorphic representations of $D^\times({\A_E})
$ to $\GL_4(\A_F)$ via the intermediary of the Jacquet-Langlands
correspondence  to first $\GL_2(\A_E)$. We note that we also need to
use the standard local-global theorem for norms of quaternion division algebra: an element of $F^\times$ arises as a norm from
$D^\times$ if and only if it does locally at all places of $F$.

We summarize the above discussion in the following theorem.

\begin{theorem}\label{8.1}
Suppose $\pi^+$ is an irreducible cuspidal representation of 
$D^+(\A_E)$ which is globally $\SL_1(D)(\A_F)$ distinguished. Then the part of the $L$-packet of $\SL_2(\A_E)$ determined by
the restriction $\pi^+$ has local-global property for $\SL_1(D)(\A_F)$; more precisely,
automorphic representations of $\SL_1(D)(\A_E)$ contained in $\pi^+$ which are 
abstractly distinguished by $\SL_1(D)(\A_F)$, belong to one orbit under the action
of $D^\times(F)$. 
\end{theorem}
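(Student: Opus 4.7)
The plan is to follow the strategy of Theorem~\ref{thm3.1}, as the discussion immediately preceding the statement already outlines. First I would fix $\pi \subset \pi^+|_{\SL_1(D)(\A_E)}$ that is globally $\SL_1(D)(\A_F)$-distinguished (which exists by hypothesis), and let $G_\pi \subset \A_E^\times$ denote the image under the reduced norm of the stabiliser in $D^\times(\A_E)$ of the isomorphism class of $\pi$. Setting
\[
H_0 = \N m(D^\times(\A_E))\cdot G_\pi,\quad H_1 = \N m(D^\times(\A_F))\cdot G_\pi,\quad H_2 = \N m(D^\times(E))\cdot G_\pi,\quad H_3 = \N m(D^\times(F))\cdot G_\pi,
\]
the theorem will follow formally from the three assertions (i)--(iii) listed just before the statement: $H_1\cdot\pi$ is the set of $\SL_1(D)(\A_E)$-components of $\pi^+$, $H_2\cdot\pi$ is the set of automorphic members of the $L$-packet of $\pi$, and $(H_1\cap H_2)/H_3 = 1$. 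Indeed, granting these, any $\pi'\subset\pi^+$ which is locally $\SL_1(D)(F_v)$-distinguished at every place is automatically automorphic (as a component of the automorphic $\pi^+$) and therefore lies in $H_1\cdot\pi\cap H_2\cdot\pi = (H_1\cap H_2)\cdot\pi = H_3\cdot\pi$, which is a single $D^\times(F)$-orbit.

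Next I would dispatch (i) and (ii). For (i), the $\SL_1(D)(\A_E)$-components of $\pi^+$ constitute the $D^+(\A_E)$-conjugation orbit of $\pi$ in the set of isomorphism classes, and this orbit maps under the reduced norm exactly to $H_1\cdot\pi$ once one uses $\A_E^{\times 2}\subset G_\pi$ (twisting by a squared character is absorbed into $\SL_1(D)(\A_E)$). Assertion (ii) is the $\SL_1(D)$-analogue of the Labesse--Langlands description of automorphic members of an $L$-packet, obtained by Jacquet--Langlands transfer from the $\SL_2$ statement reviewed in Section~6.

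The substance is (iii), which I would prove verbatim as in Theorem~\ref{thm3.1}, substituting the Jacquet--Langlands transfer $\widetilde{\pi}^{JL}$ of the $\GL_1(D)(\A_E)$-representation containing $\pi^+$ for $\widetilde{\Pi}$ throughout. Dually, one shows surjectivity of the restriction map $X(H_0/H_2)\to X(H_1/H_3)$: any character $\chi$ of $H_1/H_3$ is, by the standard local-global principle for reduced norms from $D$ (an element of $F^\times$ is a reduced norm from $D^\times(F)$ iff it is so locally at every place), identified with a character of $\A_F^\times/F^\times$ trivial on $\A_F^\times\cap G_\pi$. Extending $\chi$ arbitrarily to a character $\widehat\chi$ of $\A_E^\times/E^\times$ (trivial on $G_\pi$), the key compatibility
\[
{\rm As}(\widetilde{\pi}^{JL}\otimes\widehat\chi) \;=\; {\rm As}(\widetilde{\pi}^{JL})\otimes\chi \;=\; {\rm As}(\widetilde{\pi}^{JL})
\]
together with Proposition~\ref{prop5.4} forces $\widetilde{\pi}^{JL}\otimes\widehat\chi$ to differ from either $\widetilde{\pi}^{JL}$ or $(\widetilde{\pi}^{JL})^\sigma\cong(\widetilde{\pi}^{JL})^\vee\otimes\omega$ by a character of $\A_E^\times/E^\times$ trivial on $\A_F^\times/F^\times$; in either case one extracts a Gr\"ossencharacter on $\A_E^\times/E^\times$ extending $\chi$ that is a self-twist of $\widetilde{\pi}^{JL}$, furnishing the required preimage in $X(H_0/H_2)$.

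The main obstacle I anticipate is local bookkeeping at those places of $F$ where $D$ ramifies and $E$ is inert: at such places there is no canonical embedding of $\SL_1(D_v)$ inside $\SL_2(E_v)$, so one must verify that the Jacquet--Langlands transfer, the Asai lift, and the local distinction hypothesis are invoked compatibly, and in particular that the ``correction characters'' produced by Proposition~\ref{prop5.4} can indeed be chosen Gr\"ossencharacters despite the non-uniqueness of the embedding. Once this local check is in place, the argument is a faithful transcription of Theorem~\ref{thm3.1}.
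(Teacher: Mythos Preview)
Your proposal is essentially the paper's own argument: set up the groups $H_0,H_1,H_2,H_3$ via reduced norms, quote Labesse--Langlands for (ii), and rerun the Asai-lift computation of Theorem~\ref{thm3.1} through Jacquet--Langlands for (iii), invoking the Hasse principle for quaternionic norms to identify $H_1/H_3$ with the relevant group of Gr\"ossencharacters. This is exactly what the paper does.

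Two small corrections. First, your sentence ``any $\pi'\subset\pi^+$ \ldots\ is automatically automorphic (as a component of the automorphic $\pi^+$)'' is not justified and is in fact not how the paper argues: the automorphic members of the $L$-packet are $H_2\cdot\pi$, and the components of $\pi^+$ are $H_1\cdot\pi$; there is no reason for $H_1\subset H_2$ in general. This slip is harmless here, since the theorem is stated only for \emph{automorphic} components of $\pi^+$, which by definition lie in $(H_1\cap H_2)\cdot\pi$, and the rest of your deduction ($=(H_1\cap H_2)\cdot\pi=H_3\cdot\pi$) goes through unchanged. Second, the ``main obstacle'' you anticipate---the non-canonical embedding of $\SL_1(D_v)$ in $\SL_2(E_v)$ at places where $D$ ramifies and $E$ is inert---does not enter this proof at all. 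That subtlety is what the paper addresses \emph{after} Theorem~\ref{8.1} (via Lemma~\ref{lem7.1}) to explain why $H_1\cdot\pi$ fails to capture all locally distinguished representations, leading to the negative Proposition~8.3; for Theorem~\ref{8.1} itself the argument is purely in terms of the $H_i$ and the Asai lift, and no compatibility check of embeddings is needed.
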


Now we would like to understand the local-global question for a cuspidal representation $\pi$ of $\SL_1(D)(\A_E)$. One important fact which went into our analysis of local-global distinction
for the pair $(\SL_2(\A_E),\SL_2(\A_F))$ was that if a representation
of $\GL_2^+(E_v)$ is distinguished by $\GL_2(F_v)$ 
then it must have a Whittaker
model for a character of $E_v$ which is trivial on $F_v$.  This has the corollary
that if two representations $\pi_1$ and $\pi_2$ of $\GL_2^+(E_v)$ belonging to the 
same $L$-packet are respectively $\omega_1$ and $\omega_2$ distinguished by $\GL_2(F_v)$ 
for two characters $\omega_1,\omega_2:F_v^\times \rightarrow \C^\times$, then $\pi_1 = \pi_2$
(although $\omega_1$ may not be the same as $\omega_2$). This is what allowed
us to prove that representations of $\SL_2(E_v)$  distinguished by $\SL_2(F_v)$ belonging to
one $L$-packet are in a single orbit for the action of $\GL_2(F_v)$. 
This property fails for the pair $(\SL_2(E_v),\SL_1(D_v))$ because of the following lemma.

\begin{lem}\label{lem7.1}
Let $K$ be a quadratic ramified extension of a non-archimedean local field $k$ of 
odd residue characteristic, and $D$ a quaternion division algebra over $k$.
Let $\mu$ be an unramified
character of $K^\times$ of order 4 with $\mu^2=\omega$. Then the  
principal series representation  $\pi={\rm Ps}(\mu,\mu\omega)$ of $\GL_2(K)$
decomposes as a sum of two irreducible representations $\pi^+$ and $\pi^-$
when restricted to $\GL_2^+(K)$ with $\pi^+$  spherical,
i.e., the one which contains a vector fixed under $\GL_2(\O_K)$. Fix an embedding of $D^\times$ in $\GL_2^+(K)$ such that  
$D^\times \subset  K^\times \cdot\GL_2(\O_K)$,  
then the trivial 
representation of $D^\times$ appears in $\pi^+$, and the 
character $\omega_K$ of order 2 of $k^\times$, considered as a character of $D^\times$ through the reduced norm mapping,
 appears in $\pi^-$.
\end{lem}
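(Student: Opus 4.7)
First I establish that $\pi = \mathrm{Ps}(\mu,\mu\omega)$ is an irreducible unramified principal series with trivial central character (since $\mu \cdot \mu\omega = \mu^2\omega = \omega^2 = 1$, and $\mu/(\mu\omega) = \omega^{-1}$ is not $|\cdot|^{\pm 1}$). A direct check shows $\pi\otimes\omega \cong \pi$, because twisting by $\omega$ swaps the inducing characters. The hypotheses that $K/k$ is ramified with odd residue characteristic imply that $K^\times/k^\times K^{\times 2}$ has order two; since $\omega$ is trivial on $k^\times$ (as $\omega(\pi_k) = \omega(\pi_K^2) = 1$) and on $K^{\times 2}$, it descends to the non-trivial character of $\GL_2(K)/\GL_2^+(K)$. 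Hence $\pi|_{\GL_2^+(K)}$ splits as $\pi^+\oplus\pi^-$, with $\pi^+$ the summand containing the spherical vector $v_0$.

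For the first claim, the trivial central character of $\pi$ gives $\pi(z)v_0 = v_0$ for every $z\in K^\times$; combined with the $\GL_2(\mathcal{O}_K)$-invariance of $v_0$ and the factorisation $D^\times \subset K^\times \cdot \GL_2(\mathcal{O}_K)$, this immediately yields $\pi(d)v_0 = v_0$ for all $d\in D^\times$. So the trivial character of $D^\times$ appears in $\pi^+$.

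For the second claim I would apply Mackey theory to $\pi|_{D^\times}$. Using that $K$ splits $D$, write $D = K \oplus Kj$; right multiplication of $D^\times$ on $D$ realises the embedding $D^\times \hookrightarrow \GL_2(K)$. Since $D$ is a division algebra, $D^\times$ acts transitively on $D\setminus\{0\}$, hence on $\mathbb{P}^1(K) = B\backslash\GL_2(K)$, so $B\backslash\GL_2(K)/D^\times$ is a single double coset with stabiliser $B \cap D^\times = K^\times$, embedded as $\alpha \mapsto \mathrm{diag}(\alpha,\bar\alpha)$. A direct calculation using $\mu^2 = \omega$ shows that the inducing character restricts trivially to this $K^\times$:
\[
\chi_B(\mathrm{diag}(\alpha,\bar\alpha)) = \mu(\alpha\bar\alpha)\,\omega(\bar\alpha) = \omega(\alpha)\,\omega(\alpha) = 1,
\]
using that $\mu|_{k^\times}$ is the unramified quadratic character sending $\pi_k = \pi_K^2$ to $\mu(\pi_K)^2 = -1$, so $\mu(\mathrm{Nm}_{K/k}\alpha) = (-1)^{v_K(\alpha)} = \omega(\alpha)$. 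Hence $\pi|_{D^\times} \cong \mathrm{Ind}_{K^\times}^{D^\times}(1)$, and by Frobenius reciprocity both the trivial character and $\tilde\omega_K := \omega_K \circ \mathrm{Nrd}$ appear in $\pi|_{D^\times}$ with multiplicity one (both restrict trivially to $K^\times$, the latter because $\omega_K$ vanishes on $\mathrm{Nm}_{K/k}(K^\times)$).

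The principal obstacle is to show that the $\tilde\omega_K$-eigenvector lies in $\pi^-$ rather than $\pi^+$. I would construct the eigenvector explicitly by $F(bd) = \chi_B(b)\tilde\omega_K(d)$ for $b\in B$, $d\in D^\times$, and compute the action on $F$ of the parity operator $T:\pi\to\pi$ implementing $\pi\otimes\omega\cong\pi$ and normalised so that $Tv_0 = v_0$ (hence $T=+1$ on $\pi^+$ and $T=-1$ on $\pi^-$). Realising $T$ as the standard intertwining operator composed with $(\omega\circ\det)^{-1}$, the quantity $(TF)(e)/F(e)$ reduces to a local Tate-type integral proportional to
\[
\omega_K(-1)\int_K |t - \mathrm{Nm}_{K/k}(x)|^{-1/2}\,\omega'(t - \mathrm{Nm}_{K/k}(x))\,dx,
\]
where $t = j^2 \in k^\times$ is a non-norm from $K$ and $\omega' = \omega_K \cdot \mu|_{k^\times}$ is the third quadratic character of $k^\times$. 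The heart of the argument is then to evaluate this integral, say by fibering over the norm map $\mathrm{Nm}: K^\times \to \mathrm{Nm}(K^\times) \subset k^\times$ and decomposing into Tate zeta integrals of unramified and ramified characters, and to verify that after dividing by the Gindikin--Karpelevich normalisation constant $L(0,\omega)/L(1,\omega) = (1+q_K^{-1})/2$ the resulting sign equals $-1$. A conceptual shortcut via a normaliser element of $D^\times$ outside $\GL_2^+(K)$ is unavailable, since $N_{\GL_2(K)}(D^\times) = D^\times \cdot K^\times$ already lies inside $\GL_2^+(K)$.
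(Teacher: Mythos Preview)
Your setup and the easy half of the argument are correct and agree with the paper: the trivial central character, the self-twist by $\omega$, the splitting $\pi^+\oplus\pi^-$, the trivial character of $D^\times$ landing in $\pi^+$ via the spherical vector, and the Mackey identification $\pi|_{D^\times}\cong\mathrm{Ind}_{K^\times}^{D^\times}(1)$ all match the paper's treatment.

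The gap is exactly where you flag it. You do not carry out the intertwining-operator computation that would pin the $\tilde\omega_K$-eigenvector to $\pi^-$; you only sketch a strategy (write down the eigenfunction, apply the normalised intertwiner, reduce to a Tate-type integral over $K$, extract the sign after dividing by the Gindikin--Karpelevich constant) without executing any of it. The integral you write is not evaluated, the normalisation is not checked, and the claimed sign $-1$ is asserted rather than derived. As written, the proof stops precisely at the step that carries all the content of the lemma.

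The paper bypasses this computation entirely with a structural argument over the residue field. It realises the embedding $D^\times\subset K^\times\cdot\GL_2(\mathcal{O}_K)$ concretely by letting $\mathcal{O}_D$ act on itself as a free rank-two right $\mathcal{O}_K$-module, and shows that under the reduction $\GL_2(\mathcal{O}_K)\to\GL_2(\mathbb{F}_q)$ the image of $D^\times$ lands in the normaliser of the non-split torus $\mathbb{F}_{q^2}^\times$. Since $\pi^-$ is the $\mathrm{diag}(\varpi_K,1)$-conjugate of $\pi^+$, it contains a $\Gamma_0(\varpi_K)$-fixed vector and hence the Steinberg representation of $\GL_2(\mathbb{F}_q)$. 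Steinberg contains every non-trivial character of $\mathbb{F}_{q^2}^\times/\mathbb{F}_q^\times$, in particular the unique one of order two (this is where $q$ odd enters), and that character is stable under the normaliser, so it lifts to a character of $D^\times/k^\times$ inside $\pi^-$; the only candidate is $\omega_K$. No intertwining integrals are needed.
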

\begin{proof} It is easy to see that $D^\times$ operates transitively on ${\mathbb P}^1(K)$ such that the stabilizer of the point $\infty$ in ${\mathbb P}^1(K)$ is isomorphic to $K^\times$, 
hence by Mackey theory
we easily deduce that there are exactly two one dimensional representations of $D^\times$ contained in $\pi$, one the trivial character,
and the other which is the character $\omega_K$ of $k^\times$ considered as a character of $D^\times$. We need to see
which of these characters of $D^\times$ appear in which of the representations $\pi^+,\pi^-$.

Our first task will be  to construct an embedding of $D^\times$ in 
$  K^\times \cdot\GL_2(\O_K)$ (for $K$ quadratic ramified extension of $k$). For this we fix some notation. 

 Let $\varpi_K$ be a uniformizing element in $K$, and $\O_k, \O_K, \O_D$ 
be respectively the maximal compact subrings of $k, K, D$. Fix an embedding of $\iota: K \hookrightarrow D$.
We will consider $\O_D$ as a free rank 2 module over $\O_K$ from the right, and as an $\O_D$-module from the left. 
This gives an embedding $\O_D \hookrightarrow {\rm End}_{\O_K}(\O_D)$. 
Since $\O_D$ is invariant under conjugation
by $D^\times$, and hence by $K^\times$, left multiplication by $K^\times$ on $\O_D$ can be considered to be 
inner-conjugation by $K^\times$ up to an action of $K^\times$ on the right:
$$x \cdot \O_D = x\O_dx^{-1} \cdot x,$$
therefore $K^\times \subset D^\times$ is contained in $K^\times \cdot {\rm End}_{\O_K}(\O_D)$, 
and since $D^\times = K^\times\O_D^\times$, $D^\times$ is contained in $K^\times \cdot {\rm End}_{\O_K}(\O_D)$.

Observe that $\O_D$ comes equipped with a natural filtration consisting of two sided ideals:
$\O_D \supset \varpi_K\O_D \supset \varpi_K^2\O_D \supset \cdots$ such that the successive quotients are modules
for $\O_D/\varpi_K\O_D \cong {\F_{q^2}}$,  if $\F_q$ is the residue field of $k$. We thus have natural maps,
$$\O_D^\times \hookrightarrow {\rm Aut}_{\O_K}(\O_D) \longrightarrow {\rm Aut}_{\O_K}(\O_D/\varpi_K\O_D) = {\rm Aut}_{\F_q}(\F_{q^2}).$$

Under the composite map from $\O_D^\times$ to ${\rm Aut}_{\F_q}(\F_{q^2}) = \GL_2(\F_q)$, the image of $\O_D^\times$ is clearly 
$\F_{q^2}^\times$ acting on $\F_{q^2}$, giving rise to an embedding ${\F}_{q^2}^\times \hookrightarrow \GL_2(\F_q)$. Further, since
multiplication by $x \in K^\times$ on $\O_D$ on the left is up to a central element conjugation by $x$ on $\O_D$, the action
of $K^\times$ on $\O_D/\pi_K\O_D$ is an automorphism of algebras, i.e., an element of the Galois group of $\F_{q^2}$ over $\F_q$.
Thus the image of $D^\times$ is contained in the normalizer of $\F_{q^2}^\times$ inside $\GL_2(\F_q)$.

Since $ D^\times\subset K^\times \cdot \GL_2(\O_K)$,  given that  $\pi$ has trivial central character  and $\pi^+$ has a fixed vector under $\GL_2(\O_k)$, 
the trivial 
representation of $D^\times$ appears in $\pi^+$.  The representation 
$\pi^-$ is obtained from $\pi^+$ by conjugating by the matrix,
$$\left (\begin{array}{cc} \varpi_K & 0 \\ 0 & 1\end{array} \right ),$$
hence it is clear that $\pi^-$ has a subrepresentation on which $$\Gamma_0(\varpi) = \left \{ \left (\begin{array}{cc} a & b \\ c & d\end{array} \right ) \in \GL_2(\O_K) \bigg | \, \,\varpi_K | c    \right \}$$
acts trivially. This means that $\pi^-$ must contain the Steinberg representation
of $\GL_2(\F_q)$, where $\F_q$ is the residue field of $K$, as the Steinberg is the only 
non-trivial irreducible representation of $\PGL_2(\F_q)$ with a vector fixed under the group
of upper triangular matrices. Since the Steinberg representation contains all non-trivial characters of
${\F_{q^2}^\times}/{\F_q ^\times}$, 
the unique non-trivial character of ${\F_{q^2}^\times}/{\F_q ^\times}$ of order 2 appears in $\pi^-$. (This is where we use that $q$ is odd to ensure that
${\F_{q^2}^\times}/{\F_q ^\times}$ has a character of order 2.)
 Since the unique non-trivial character of 
${\F_{q^2}^\times}/{\F_q ^\times}$ of order 2 is left invariant by the normalizer of $\F_{q^2}^\times$ inside $\GL_2(\F_q)$, we conclude that there 
is a character of order 2 of 
$D^\times/k^\times$ appearing in $\pi^-$, which cannot be anything else but $\omega_K$. \end{proof}

Because of this lemma, the local-global property fails,  which we record in the following proposition.

\begin{prop} Let $E$ be a quadratic extension of a number field $F$, and $D$ a quaternion division algebra over $F$. Then there exists an
automorphic representation of $D^+(\A_E)$ which is abstractly distinguished by $\SL_1(D)(\A_F)$, but not globally distinguished in terms
of having nonzero period integral on this subgroup.
\end{prop}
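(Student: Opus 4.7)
The plan is to use Lemma 7.1 to construct the counterexample explicitly. Fix a finite place $v_0$ of $F$ that is ramified in $E$, at which $D$ is ramified, and of odd residue characteristic; set $k=F_{v_0}$ and $K=E_{v_0}$. Lemma 7.1 then furnishes a principal series $\pi=\mathrm{Ps}(\mu,\mu\omega)$ of $\GL_2(K)$ whose restriction to $\GL_2^+(K)$ splits as $\pi^+\oplus\pi^-$, with the trivial character of $D_{v_0}^\times$ appearing in $\pi^+$ and the order-two character $\omega_K$ (pulled back via reduced norm) appearing in $\pi^-$. Both of these characters of $D_{v_0}^\times$ restrict trivially to $\SL_1(D)(F_{v_0})$, so both $\pi^+$ and $\pi^-$ carry non-zero $\SL_1(D)(F_{v_0})$-invariant functionals.

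Next I would produce a non-CM cuspidal automorphic representation $\tilde\Pi$ of $D^\times(\A_E)$ with $\tilde\Pi_{v_0}\cong\pi$ that is globally $D^\times(\A_F)$-distinguished (equivalently, $L(s,\mathrm{As}(\tilde\Pi))$ has a pole at $s=1$) but is \emph{not} $\omega$-distinguished for any non-trivial quadratic Hecke character $\omega$ of $F$. Existence follows from the characterization of distinction via poles of (twisted) Asai $L$-functions, together with standard density results for cuspidal representations with prescribed local component at finitely many places: being non-CM forces at most one character $\omega$ to make $\tilde\Pi$ $\omega$-distinguished, and one can generically arrange that character to be trivial. Now restrict $\tilde\Pi$ to $D^+(\A_E)$; this yields a finite direct sum of automorphic representations, and I let $\Pi^-$ be the summand whose local component at $v_0$ is $\pi^-$ and which agrees with $\tilde\Pi_v$ at all other places. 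At every place $\Pi^-$ is then locally $\SL_1(D)(F_v)$-distinguished: at $v=v_0$ by the previous paragraph, and at $v\neq v_0$ because the local component is $\tilde\Pi_v$, which is $D^\times(F_v)$-distinguished and hence $\SL_1(D)(F_v)$-distinguished.

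The heart of the argument is to show $\Pi^-$ is not globally $\SL_1(D)(\A_F)$-distinguished. The quaternionic analogue of Proposition 2.1, obtained by exactly the same elementary Fourier analysis, expresses the $\SL_1(D)(\A_F)$-period of a vector $\phi\in\tilde\Pi$ as a sum over quadratic Hecke characters $\omega$ of $\A_F^\times/F^\times$ of twisted $D^\times(\A_F)$-periods $\int\phi(g)\,\omega(\mathrm{Nrd}\,g)\,dg$. By our choice of $\tilde\Pi$, only the term $\omega=1$ survives, so the $\SL_1(D)(\A_F)$-period coincides with the untwisted $D^\times(\A_F)$-period. The latter factors as a product of local $D^\times(F_v)$-invariant linear functionals on $\tilde\Pi_v$; the factor at $v_0$ is, by Lemma 7.1, the functional attached to the trivial character of $D_{v_0}^\times$, which is supported on $\pi^+$ and vanishes identically on $\pi^-$. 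Consequently the global $\SL_1(D)(\A_F)$-period vanishes on every vector of $\Pi^-$, giving the desired counterexample. The main technical obstacle is the existence step for $\tilde\Pi$: one must simultaneously control a prescribed local component at $v_0$, the pole of the Asai $L$-function at $s=1$, and the absence of poles of all non-trivial quadratic twists of that $L$-function, requiring input both from trace-formula constructions and from the analytic theory of Asai $L$-functions.
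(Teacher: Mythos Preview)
Your approach differs from the paper's, which gives a short counting argument rather than an explicit single-place construction. The paper chooses a non-CM cuspidal $\tilde\pi$ on $D^\times(\A_E)$ whose local components are unramified principal series of the Lemma~8.2 type at \emph{several} ramified places of $E/F$, so that $\tilde\pi|_{D^+(\A_E)}$ has more than four irreducible summands that are abstractly $\SL_1(D)(\A_F)$-distinguished (at each such place both $\pi^+$ and $\pi^-$ carry an $\SL_1(D)$-invariant functional). If every one of these summands were globally distinguished, then by the quaternionic analogue of Proposition~2.1 each would force $\tilde\pi$ to be $\omega$-distinguished on $D^\times(\A_F)$ for some quadratic Gr\"ossencharacter $\omega$; by local multiplicity one each such $\omega$ singles out at most one summand, so $\tilde\pi$ would be $\omega$-distinguished for more than four distinct $\omega$, contradicting the fact that the Asai lift to $\GL_4(\A_F)$ has at most four one-dimensional constituents. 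No global distinction hypothesis on $\tilde\pi$, and no identification of which summand fails, is required.

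Your route is more explicit but has a genuine gap at the existence step you yourself flag: producing a cuspidal $\tilde\Pi$ that is simultaneously non-CM, has the prescribed principal series at $v_0$, and is globally $D^\times(\A_F)$-distinguished by the \emph{trivial} character is not a standard density statement --- the last condition is a global constraint (a pole of the Asai $L$-function, and moreover the correct one among the two possibilities trivial versus $\omega_{E/F}$), and your principal-series component at $v_0$ is locally distinguished by both characters, so it cannot by itself force the choice. There is also a second gap you do not flag: defining $\Pi^-$ as the summand that ``agrees with $\tilde\Pi_v$ at all other places'' presupposes that $\tilde\Pi_v|_{D^+(E_v)}$ is irreducible for every $v\neq v_0$, which nothing in your construction ensures; if it splits at some further place you must separately verify that whichever component you pick there is still $\SL_1(D)(F_v)$-distinguished. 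The paper's pigeonhole argument sidesteps both issues by needing only a routine prescription of local components at finitely many places, at the price of not exhibiting the failing representation explicitly.
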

\begin{proof} Let 
$\tilde{\pi}$ be a non-CM cuspidal representation of $D^\times(\A_E)$ with unramified principal series local components at many places where $E/F$ is ramified so that we are in the context of Lemma 8.2, and  the  restriction of $\tilde{\pi}$ to $D^+({\mathbb A}_E)$ has more than four direct summands which are 
abstractly distinguished by $\SL_1(D)(\A_F)$. Since a non-CM $L$-packet is stable, all these direct summands are automorphic as well. 
If all these representations where globally distinguished with respect to $\SL_1(D)({\mathbb A}_F)$, then in particular, they
will be globally $\omega$-distinguished with respect to $D^\times({\mathbb A}_F)$ for certain quadratic characters $\omega: \A_F^\times/F^\times
\rightarrow \Z/2$.  It would then follow that $\tilde{\pi}$ is distinguished by  $D^\times({\mathbb A}_F)$ for more than 
four Gr\"ossencharacters, which is not possible as global distinction is characterized in terms of the Asai lift of $\tilde{\pi}$ to 
$\GL_4(\A_F)$ to contain a Gr\"ossencharacter as a direct summand, and so $\tilde{\pi}$ can be $\omega$-distinguished for at most 
four Gr\"ossencharacters $\omega: \A_F^\times/F^\times \rightarrow \C^\times$. 
\end{proof}

On the positive side, we have the following result.

\begin{prop} Let $E$ be a quadratic extension of a number field $F$, and $D$ a quaternion division algebra over $F$. Let
$\Pi$ be an automorphic representation of $D^+(\A_E)$ which is abstractly distinguished by $D^\times(\A_F)$ by a Gr\"ossencharacter $\omega:
\A_F^\times/F^\times \rightarrow \C^\times$, then if $\Pi$ has a discrete series local component, it is  globally distinguished in terms
of having nonzero period integral on this subgroup with respect to the character $\omega$.
\end{prop}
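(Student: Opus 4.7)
The plan is to reduce to the established global theorem for the ambient pair $(D^\times(\A_E), D^\times(\A_F))$, namely the quaternionic Harder--Langlands--Rapoport theorem of Jacquet--Lai \cite{lai} and Flicker \cite{flicker3}, and then to pin down the correct twisting character using the discrete series local component.

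First, I would extend $\Pi$ to a cuspidal automorphic representation $\widetilde{\Pi}$ of $D^\times(\A_E)$ containing $\Pi$ as a summand of its restriction to $D^+(\A_E)$. Since $D^\times(\A_F) \subset D^+(\A_E)$, the hypothesis that each $\Pi_v$ carries a nontrivial $(D^\times(F_v), \omega_v)$-equivariant linear form gives in particular that each $\widetilde{\Pi}_v$ is $\omega_v$-distinguished by $D^\times(F_v)$. By the cited theorem (which characterizes global distinction in terms of the Asai $L$-function of the Jacquet--Langlands transfer $\widetilde{\Pi}^{\mathrm{JL}}$ having a pole at $s=1$ after twisting), local $\omega$-distinction at every place forces $\widetilde{\Pi}$ to be globally distinguished with respect to either $\omega$ or $\omega \cdot \omega_{E/F}$, where $\omega_{E/F}$ is the quadratic idele class character associated to $E/F$.

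The discrete series hypothesis at $v_0$ is used to eliminate the second possibility. The local dichotomy already invoked in the $(\GL_2(E),\GL_2(F))$ setting (recalled in the introduction of the paper) carries over verbatim: a discrete series representation of $D^\times(E_{v_0})$ that is $\omega_{v_0}$-distinguished by $D^\times(F_{v_0})$ cannot simultaneously be $(\omega\cdot \omega_{E/F})_{v_0}$-distinguished, since such double distinction is ruled out by the epsilon factor computation for discrete series. Hence $\widetilde{\Pi}$ cannot be globally $(\omega \cdot \omega_{E/F})$-distinguished (which would require local distinction by that character at $v_0$), and so $\widetilde{\Pi}$ is globally $\omega$-distinguished.

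The final step, which I expect to be the main obstacle, is descending from global $\omega$-distinction of $\widetilde{\Pi}$ to that of the specific summand $\Pi$ of $\widetilde{\Pi}|_{D^+(\A_E)}$. The $D^\times(\A_F)$-period $\mathcal{P}_\omega$ on $\widetilde{\Pi}$ is nonzero but could a priori vanish identically on $\Pi$. I would handle this by a Fourier-theoretic decomposition of $\mathcal{P}_\omega$ along the finite abelian quotient corresponding to the summands of $\widetilde{\Pi}|_{D^+(\A_E)}$ (in the spirit of the elementary arguments in Section 2), and then use the abstract $\omega$-distinction of $\Pi$ at every place, together with the $D^\times(F)$-action permuting the summands (which preserves the global period since $D^\times(F) \subset D^\times(\A_F)$), to conclude that $\Pi$ itself supports a nonzero $\omega$-period integral on $D^\times(\A_F)$.
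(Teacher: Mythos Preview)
Your reduction to the global theorem for $(D^\times(\A_E),D^\times(\A_F))$ and your use of the discrete series component to eliminate the $\omega\cdot\omega_{E/F}$-distinguished alternative are exactly what the paper does. The divergence is in the last step, and there your proposal has a genuine gap.

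You propose to descend the nonzero period from $\widetilde{\Pi}$ to the specific summand $\Pi$ via a Fourier-type decomposition together with the action of $D^\times(F)$ permuting the summands of $\widetilde{\Pi}|_{D^+(\A_E)}$. But $D^\times(F)$ does \emph{not} permute those summands: the reduced norm sends $D^\times(F)$ into $F^\times \subset \A_F^\times$, so $D^\times(F) \subset D^+(\A_E)$, and its action on $\widetilde{\Pi}|_{D^+(\A_E)}$ is by inner automorphisms, preserving each summand. Likewise, the ``Fourier-theoretic decomposition of $\mathcal{P}_\omega$ along the finite abelian quotient'' is not analogous to the elementary identities of Section~2; those identities relate periods over a \emph{smaller} group to a sum of twisted periods over a \emph{larger} group, which is the opposite direction from what you need here. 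As written, your argument does not show that the period is nonzero on $\Pi$ rather than on some other summand.

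The paper's argument for this step is both simpler and different in nature: it invokes \emph{local multiplicity one} for $(D^\times(F_v),\omega_v)$-equivariant linear forms on irreducible representations of $D^\times(E_v)$. Since $\Pi_v$ carries such a form by hypothesis, multiplicity one on $\widetilde{\Pi}_v$ forces every other summand of $\widetilde{\Pi}_v|_{D^+(E_v)}$ to carry none; hence the (essentially unique) local form, and therefore the global period, is supported precisely on $\Pi=\otimes_v \Pi_v$. You already isolated the right local input (abstract $\omega$-distinction of $\Pi$ at every place); what is missing is the multiplicity-one statement that converts this into the desired conclusion.
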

\begin{proof} Let $\widetilde{\Pi}$ be an automorphic representation of $D^\times(\A_E)$ containing $\Pi$.
 We recall that by a theorem of Jacquet-Lai, cf. [JL85], if a cuspidal automorphic representation of $\GL_2(\A_E)$ 
has nonzero period integral on $D^\times(\A_F)$, then so is also the case for the period integral on $\GL_2(\A_F)$. There is analogous
local theorem proved by J. Hakim [Hak91], and by the second author in [Pr92]. Further, there is a more general global theorem
proved by Flicker and Hakim in [FH94] in which one uses the Jacquet-Langlands correspondence to go from automorphic representations
of $D^\times(\A_E)$ to automorphic representations of $\GL_2(\A_E)$. 

Given the  local results in the previous paragraph, it follows that $\widetilde{\Pi}^{JL}$ is abstractly $\omega$-distinguished 
with respect to 
$\GL_2(\A_F)$, and as recalled in the introduction, this means that $\widetilde{\Pi}^{JL}$ is  $\omega$-distinguished with respect to $\GL_2(\A_F)$ (this is where we use $\Pi$ having a discrete series local component, else the conclusion is 
either $\omega$-distinguished or $\omega\cdot \omega_{E/F}$-distinguished), and hence, by the global results of the previous
paragraph, $\widetilde{\Pi}$ is  $\omega$-distinguished 
with respect to $D^\times(\A_F)$. 

Now by the multiplicity one theorem about the space of $D^\times(F_v)$-invariant forms on an irreducible representation of $D^\times(E_v)$, 
it follows that $\Pi$ itself is $\omega$-distinguished, completing the proof of the proposition.
\end{proof}

Thus Proposition 8.4 says that the problem in the failure of the local-global principle in Proposition 8.3 is one of 
patching local characters of $F_v^\times$ into a Gr\"ossencharacter on $\A_F^\times$. We can capture this more precisely as follows.

To an automorphic representation $\Pi = \otimes \Pi_v$ of $D^\times(\A_E)$, define local groups ${\mathcal S}_v$ consisting of characters
$\omega_v$ of $F_v^\times$ such that $\Pi_v$ is $\omega_v$-distinguished with respect to the subgroup $D^\times(F_v)$. We know that
${\mathcal S}_v$ is a finite set consisting of at the most 4 elements, and that for most places $v$ of $F$, there is an
unramified character in ${\mathcal S}_v$, and there are at the most two unramified characters in ${\mathcal S}_v$ which if there are two 
are twists of each other by $\omega_{E_v/F_v}$. So the previous proposition can also be stated as saying that an
automorphic representation $\Pi$ of $D^\times(\A_E)$ is globally distinguished by $\SL_1(D)(\A_F)$ if and only if  there is a 
Gr\"ossencharacter $\omega = \prod \omega_v: \A_F^\times/F^\times \rightarrow \C^\times$ such that $\omega_v$ belongs to ${\mathcal S}_v$
for all places $v$ of $F$.

\section{Distinction of some member in an $L$-packet for the toric period}
In this section we prove theorem 1.4 which we recall for the convenience of the reader.

\begin{theorem}\label{3} Let $D$ be a quaternion  algebra over a number field $F$, with $E$ a quadratic subfield of $D$.
Let $\Pi = \otimes_v \Pi_v$ be a cuspidal representation of $\SL_1(D)({\mathbb A}_F)$ 
with at least one square integrable component at a place $v_0$ of $F$ which we assume is of odd residue characteristic if $E$ is inert 
and $D$ split at $v_0$.  If each $\Pi_v$ is distinguished with respect to 
$E_v^1$, then there is a cuspidal representation in the $L$-packet of $\Pi$ 
which is distinguished with respect to ${\mathbb A}_E^1$.
\end{theorem}

\begin{proof} Let 
$\widetilde{\Pi}=\otimes \widetilde{\Pi}_v$ be a cuspidal representation of $D^\times({\mathbb A}_F)$ 
containing $\Pi = \otimes \Pi_v$. We assume that each $\Pi_v$ is distinguished with respect to $E_v^1$, 
the group of norm one elements of $E_v^{\times}$. 
We assume, without loss of generality, that the central character of $\widetilde{\Pi}$ is trivial. 
Since $\Pi_v$ and hence  $\widetilde{\Pi}_v$ 
is distinguished with respect to $E_v^1$, $\widetilde{\Pi}_v$ is 
 $\alpha_v$-distinguished 
with respect to $E_v^{\times}$, 
where $\alpha_v$ is a character of $\frac{E_v^{\times}}{F_v^{\times}E_v^{1}}$, hence there 
is a quadratic character $\beta_v$ of $F_v^{\times}$ such that $\widetilde{\Pi}_v$ is $\beta_v \circ \N m$
distinguished with respect to $E_v^{\times}$. Since being distinguished is no condition at places $v$ of $F$ where $\widetilde{\Pi}_v$ 
is a principal series representation ---so at all but finitely many places of $F$--- we can assume that $\widetilde{\Pi}_v$ is $\beta_v \circ \N m
$-distinguished
at all places $v$ of $F$ for a Gr\"ossencharacter $\beta$ of ${\mathbb A}_F^{\times}$ with $\beta^2=1$.

The proof of Theorem \ref{3} will depend on two technical lemmas, 
one local and the other global. The local lemma allows one to twist a 
representation $\widetilde{\Pi}_0$ of $\GL_2(F_0)$ by a 
quadratic character $\chi_0$ 
to change epsilon factor $\epsilon(\widetilde{\Pi}_0)$ to 
$\epsilon(\widetilde{\Pi}_0 \otimes \chi_0)$ 
so that the global epsilon factor $\epsilon(\widetilde{\Pi} \otimes \chi)$ can be 
assumed to be 1 if the original $\epsilon(\widetilde{\Pi})$ was $-1$; this then allows one to appeal to Conjecture \ref{conj}  about simultaneous nonvanishing of two $L$-values. An
added subtlety that we must deal with is that in changing the sign of 
$\epsilon(\widetilde{\Pi}_0)$ to $\epsilon(\widetilde{\Pi}_0 \otimes \chi_0)$, such quadratic characters
must appear in the representation $\widetilde{\Pi}_0$ at that place, so by the theorem of Saito-Tunnell, some other epsilons must be controlled. This is where 
existence of a discrete series component of the automorphic representation is 
used.

\begin{lem}\label{lemma8.1} Let $\widetilde{\Pi} = \otimes \widetilde{\Pi}_v$ be an automorphic representation of 
$D^\times({\mathbb A}_F)$ with trivial central character. Let $E$ be a quadratic extension of $F$ contained in $D$ 
with $\omega_E = \prod \omega_v$ the associated Gr\"ossencharacter of ${\mathbb A}_F^{\times}$. Assume that 
$\widetilde{\Pi}$ has at least 
one square integrable component, say at $v_0$ which we assume is of odd residue characteristic if $E$ is inert at $v_0$, and $D$ is split at $v_0$. 
Assume that $\widetilde{\Pi}$ is abstractly distinguished by the character $\beta \circ \N m$ of 
${\mathbb A}_E^\times$ for a quadratic Gr\"ossencharacter 
$\beta$ of ${\mathbb A}^\times_F$. 
Then there is a Gr\"ossencharacter $\eta$ of ${\mathbb A}_F^{\times}$ with $\eta^2=1$, 
such that 
\begin{eqnarray}\label{id1}
\epsilon(\widetilde{\Pi}\otimes \eta)=1=\epsilon(\widetilde{\Pi}\otimes \omega_E \eta),
\end{eqnarray}
and furthermore, 
\begin{eqnarray}\label{id2}
\epsilon(\widetilde{\Pi}_v\otimes \eta_v) \epsilon(\widetilde{\Pi}_v\otimes \omega_v\eta_v)=
\epsilon(\widetilde{\Pi}_v \otimes \beta_v) \epsilon(\widetilde{\Pi}_v\otimes \omega_v\beta_v)
\end{eqnarray}
for all $v$. 
Moreover, $\eta$ can be made to agree with $\beta$ at finitely many prescribed places other than $v_0$.
\end{lem}

The proof of this lemma will depend on a  local lemma that we come to presently, but before we do that, we fix some notation.
Some of this  notation as well as the proofs that follow are due to one of the referees to this paper.

Let $\tilde{\pi}_0$ be a discrete series representation of $\PGL_2(F_0)$.
For $\nu = \pm 1$, let 
$$X^{\nu} = \left \{ \chi: F^\times \rightarrow \Z/2 \,\,| \,\,\epsilon(\tilde{\pi}_0 
\otimes \chi )= \nu \chi(-1)\epsilon(\tilde{\pi}_0) \right \}.$$

Since $\tilde{\pi}_0$ is a discrete series representation, it is known that both $X^+$ and $X^-$ are non-empty \cite{waldspurger1}.

For $\nu,\nu' = \pm 1$, let 
$$X^{\nu,\nu'} = \left \{ \chi \in X^\nu \,\, \big | \,\, \omega_0 \chi \in X^{\nu'} \right \}.$$
Clearly, the map $\chi \rightarrow \omega_0 \chi$ is a bijection of $X^{\nu,\nu'}$ onto $X^{\nu',\nu}$. Therefore if 
$X^{+-} \not = \emptyset$, 
then also $X^{-+} \not = \emptyset$. In this case, if $\gamma_0 \in X^{+-}$, and $\delta_0 \in X^{-+}$, then the pair
$(\gamma_0,\delta_0)$ satisfies the two conditions in the statement of the lemma. If  $X^{+-} = \emptyset$, 
then also $X^{-+}= \emptyset$, in which case, condition (i) implies condition (ii). Note also that by the theorem of Saito-Tunnel, 
a character $\alpha$ belonging to $X^{\nu,\nu'}$, appears in the restriction of $\tilde{\pi}_0$ to $E_0^\times$ if $\nu \nu'=1$,
and belongs to the restriction of the representation $\tilde{\pi}_0^{JL}$ of $D^\times$ to $E_0^\times$ if $\nu \nu'=-1$.

\begin{lem}\label{lemma8.2} In the notation introduced above, if $X^{+-}$ is non-empty, so is $X^{-+}$, and conversely. 
In odd residue characteristic, if $X^{++}$ is non-empty, so is $X^{--}$, and conversely. 
\end{lem}

\begin{proof} As observed above, $\eta \rightarrow \omega_0 \eta$ gives a bijection of 
$X^{+-}$ with $X^{-+}$, so if $X^{+-}$ is non-empty, so is $X^{-+}$, and conversely. 

The second part of the lemma is subtler, and we can offer a proof only in odd residue characteristic where it is known that
the number of quadratic characters of $F_0^\times$ is 4, and further any irreducible admissible representation $\widetilde{\pi}$ 
of $\GL_2(F_0)$ has a self-twist by a non-trivial character $\mu$ of order 2. This implies that the sets
$X^{\nu,\nu'}$ are stabilized by multiplication by $\mu$, and hence their cardinalities are even integers. Given that
$X^{++}, X^{+-},X^{--},X^{-+}$ 
are disjoint sets of total cardinality 4 with cardinalities of $X^{+-}$ and $X^{-+}$ equal, we  easily deduce 
 that it is not possible for
$X^{++}$ to be non-empty but $X^{--}$ to be empty, and conversely; here we  also use the fact that  
$X^+, X^-$ are both known to be non-empty. 
\end{proof}

\noindent{\bf Remark:} We are unable to prove the second part of the lemma above in residue characteristic 2 one way or the other which seems like an
interesting {\it Exercice dyadiques}.

\begin{lem}\label{lemma8.3}
Let $E_0$ be a separable quadratic algebra over a local field $F_0$, with $\omega_0$ as the
corresponding character of $F_0^{\times}$ with $\omega_0^2=1$. Let $\widetilde{\pi}_0$ be an irreducible
admissible discrete series representation of $\PGL_2(F_0)$; if $E_0$ is a quadratic field extension of $F_0$, assume that 
it is of odd residue characteristic. 
Let $\beta_0$ be a quadratic character of $F_0^\times$. Then there exists  a quadratic
character $\gamma_0$ of $F_0^{\times}$ with,
 \[\frac{\epsilon(\widetilde{\pi}_0 \otimes \gamma_0)}{\gamma_0(-1)}   =  -\frac{\epsilon(\widetilde{\pi}_0 \otimes \beta_0)}{\beta_0(-1)},
\]  {and,} 

 $$\epsilon({\widetilde{\pi}}_0\otimes \gamma_0) \epsilon({\widetilde{\pi}}_0\otimes \omega_0\gamma_0)  =  
\epsilon({\widetilde{\pi}}_0 \otimes \beta_0) \epsilon({\widetilde{\pi}}_0\otimes \omega_0\beta_0).$$

\end{lem}

\begin{proof} If $\beta_0$ belongs to $X^{+-}$, then choose $\gamma_0$ from the non-empty set $X^{-+}$, and conversely. If $\beta_0$ belongs to $X^{--}$, 
then choose $\gamma_0$ from the non-empty set $X^{++}$, and conversely. (The non-emptiness of the sets involved is the conclusion of the previous lemma,
which needs an appeal to odd residue characteristic.)
\end{proof} 
\vspace{2mm}

\noindent{\bf Proof of Lemma \ref{lemma8.1}:} Since the representation $\widetilde{\Pi}_v$ of $D_v^\times$ 
is $\beta_v \circ \N m$ distinguished for the subgroup $E_v^\times$, by the Saito-Tunnell theorem, we have,
$$\epsilon(\widetilde{\Pi}_v \otimes \beta_v) \epsilon(\widetilde{\Pi}_v\otimes \omega_v\beta_v) = \omega_v(-1) \omega_{D_v}(-1),$$
where $\omega_{D_v}(-1) = -1$ if $D_v$ is ramified, and  $\omega_{D_v}(-1) = 1$ if $D_v \cong M_2(F_v)$. It follows that,
$$\epsilon(\widetilde{\Pi} \otimes \beta)   \epsilon(\widetilde{\Pi} \otimes \beta \omega)   
= \prod_v \omega_{D_v}(-1) = 1,$$
the last equality following from the fact that   the number of ramified primes 
of $D$ are even in number.

Therefore, if $\epsilon(\widetilde{\Pi} \otimes \beta)=1$,  then so is $\epsilon(\widetilde{\Pi} \otimes \beta \omega)$,
and $\eta=\beta$  has
all the desired properties to apply Conjecture \ref{conj} .

If $\epsilon(\widetilde{\Pi} \otimes \beta)=-1$, 
we will use the fact that $\widetilde{\Pi}$ has a square integrable component at $v_0$ to modify $\beta$ to
construct $\eta$ such that $\epsilon(\widetilde{\Pi} \otimes \eta)=1 = \epsilon(\widetilde{\Pi} \otimes \eta \omega) $, 
with
$$\epsilon(\widetilde{\Pi}_v\otimes \eta_v) \epsilon(\widetilde{\Pi}_v\otimes \omega_v\eta_v)=
\epsilon(\widetilde{\Pi}_v \otimes \beta_v) \epsilon(\widetilde{\Pi}_v\otimes \omega_v\beta_v)$$
for all $v$.

Let $\gamma$ be a quadratic Gr\"ossencharacter of ${\mathbb A}_F^{\times}$ with $\gamma_{v_0} = \gamma_0$ as in Lemma \ref{lemma8.3},
and which at the other  places $v$ of $F$ where either $D$ or $\widetilde{\Pi}$ is ramified 
is $\beta_v$ (and no constraints outside the ramified primes of 
$\widetilde{\Pi}$). 
By a well-known calculation
about the epsilon factor of a principal series representations of 
$\PGL_2(F_v)$, it follows that 
$$\frac{\epsilon({\widetilde{\Pi}_v}\otimes \chi)}{\chi(-1)}=\epsilon({\widetilde{\Pi}}_v),$$
for $\Pi_v$ a principal series representation of $\PGL_2(F_v)$, and  $\chi$ any character of $F_v^\times$ with $\chi^2=1$. Therefore we have,
$$ \epsilon(\widetilde{\Pi} \otimes \gamma) = \prod_v 
\frac{\epsilon({\widetilde{\Pi}_v}\otimes \gamma_v)}{\gamma_v(-1)}= 
-\frac{\epsilon({\widetilde{\Pi}_0}\otimes \beta_0)}{\beta_0(-1)}
\prod_{v \not = v_0} \frac{\epsilon({\widetilde{\Pi}_v}\otimes \beta_v)}{\beta_v(-1)} = -\epsilon(\widetilde{\Pi} \otimes \beta),$$
proving Lemma \ref{lemma8.1}.

\vspace{4mm}
\noindent{\bf (Proof of Theorem \ref{3}):}
Appealing  to Conjecture \ref{conj},  we get a  Gr\"ossencharacter, say $\eta^\prime$, of ${\mathbb A}_F^{\times}$ with $(\eta')^2=1$, such that 
\begin{enumerate}
\item
$L(\frac{1}{2},\widetilde{\Pi}\otimes \eta^\prime) 
\neq 0 \neq L(\frac{1}{2},\widetilde{\Pi}\otimes \omega \eta^\prime) $
\item
$\eta^\prime$ agrees with $\eta$ at all the  places $S$ of $F$ containing the infinite places of $F$, and the places of $F$ where 
$\widetilde{\Pi}$ or $D$ is ramified.
\end{enumerate}
Given this,
equation (\ref{id2}) of Lemma \ref{lemma8.1} 
continues to be satisfied with $\eta^\prime$ instead of $\eta$ at all places $v$ of $F$ since outside of $S$, there is no condition as can be easily checked. 
Thus, $\widetilde{\Pi}\otimes \eta^\prime$ is distinguished with respect to $E_v^{\times}$ at all the places $v$. Since $L(\frac{1}{2},\widetilde{\Pi}\otimes \eta^\prime) \neq 0 \neq L(\frac{1}{2},\widetilde{\Pi}\otimes 
\omega \eta^\prime) $,  the nonvanishing of the toric period 
on $\widetilde{\Pi}\otimes \eta^\prime$ follows by the work of Waldspurger. This is enough to conclude that 
there is a member in the $L$-packet of $\Pi$ on which 
the ${\mathbb A}_E^1$-period integral is nonvanishing by Proposition \ref{global2}. This finishes the proof of Theorem \ref{3}. \end{proof}

\noindent{\bf Remark:}
The previous arguments work as well for 
 the split toric period of a cuspidal representation 
of $\SL_2({\mathbb A}_F)$. In fact, in this case, since $\omega_{E/F}=1$,
there are not two $L$-values to control, but a single one, 
whose nonvanishing is the main theorem of the paper of 
Friedberg and Hoffstein \cite{hoffstein}, so we do not need to
resort to Conjecture \ref{conj}  in the split toric case, and we get an unconditional theorem.
Further, in this case local-global 
principle holds true for individual automorphic representations, 
since automorphic representations (in an $L$-packet)
are all $F^{\times}$ conjugate of each 
other and therefore if  period integral is nonzero on one automorphic representation,
it is nonzero on any other automorphic member of the $L$-packet. 

\section{Local-global principle for toric period}
Let $E$ be a quadratic extension of a number field $F$. Fix an embedding of $E^\times$
in $\GL_2(F)$, and hence an embedding of $E^1$ into $\SL_2(F)$.  
Let $\Pi = \otimes \Pi_v$ be an automorphic representation of $\SL_2(\A_F)$.
The group $\A_F^\times$ sitting inside $\GL_2(\A_F)$ as 
$$\left (\begin{array}{cc} x & 0 \\ 0 & 1\end{array} \right ),$$
operates on $\SL_2(\A_F)$ via conjugation action, and therefore on the 
set of isomorphism classes of representations of $\SL_2(\A_F)$. The orbit 
of $\Pi$ under the action of $\A_F^\times$ is precisely the set of global 
$L$-packet of representations of $\SL_2(\A_F)$ containing $\Pi$. Let
$G_\Pi \subset \A_F^\times $,  $G_\Pi= \prod G_v$,
 be the stabilizer of the representation $\Pi 
=\otimes \Pi_v$, where $G_v$ is the stabilizer inside $F_v^\times$ 
of the representation $\Pi_v$.

The action of $F^\times$ on $\SL_2(\A_F)$ 
is transitive on the set of 
automorphic representations of $\SL_2(\A_F)$ contained in the global $L$-packet determined by $\Pi$.
Clearly if $\Pi$ has a nonzero period integral
on the given embedding of $E^1(\A_F)$ in $\SL_2(\A_F)$, then so will all its conjugates under $\N(E^\times)$. 
If we can prove that these are the only automorphic representations of $\SL_2(\A_F)$ which have
local periods with respect to $E^1(F_v)$ for all places $v$ of $F$, we will have proved the
local-global principle for toric periods. However, the proof in the toric case 
will not be so simple, and will depend on using another related group 
$\GL^+_2(\A_F)$, defined as follows:
$$\GL^+_2(\A_F) = \{g \in \GL_2(\A_F) | \det g \in \N(\A_E^\times) \}.$$

We will prove that 
the part of the $L$-packet of $\SL_2(\A_F)$ determined by
the restriction of an irreducible automorphic representation $\Pi^+$ of 
$\GL^+_2(\A_F)$
has local-global property if $\Pi^+$ is globally distinguished by a quadratic
character of $\A_F^\times$.

\begin{theorem}\label{9.1}
Suppose $\Pi^+$ is an irreducible cuspidal representation of 
$\GL^+_2(\A_F)$ which is globally $\A_E^\times$ distinguished by a quadratic
character $\omega$ of $\A_F^\times/F^\times$, i.e., by the character $\omega \circ \N$ of
$\A_E^\times/E^\times$. Then the part of the $L$-packet of $\SL_2(\A_F)$ determined by
the restriction of an irreducible automorphic representation $\Pi^+$ of 
$\GL^+_2(\A_F)$ has local-global property for $\A^1_E/E^1 \subset \SL_2(F) \backslash \SL_2(\A_F)$.
\end{theorem}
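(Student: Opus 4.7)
The plan is to adapt the strategy used in the proof of Theorem \ref{thm3.1}, substituting the base change map from $\GL_2(\A_F)$ to $\GL_2(\A_E)$ for the Asai lift, precisely as foreshadowed in the remark at the end of Section 6. Let $\widetilde{\Pi}$ be a cuspidal automorphic representation of $\GL_2(\A_F)$ whose restriction to $\GL_2^+(\A_F)$ contains $\Pi^+$, and let $G_\Pi \subset \A_F^\times$ denote the stabiliser of the isomorphism class of $\Pi$ under the action of $\A_F^\times$ on $\SL_2(\A_F)$ by conjugation via $\mathrm{diag}(x,1)$. Mimicking Section 7 I would set
\begin{eqnarray*}
H_0 &=& \A_F^\times, \\
H_1 &=& \N(\A_E^\times)\cdot G_\Pi, \\
H_2 &=& F^\times\cdot G_\Pi, \\
H_3 &=& \N(E^\times)\cdot F^\times\cdot G_\Pi,
\end{eqnarray*}
so that $H_0\cdot\Pi$ is the $L$-packet of $\Pi$, $H_2\cdot\Pi$ is the set of automorphic members (via Labesse--Langlands together with uniqueness of Whittaker models), and---using Proposition \ref{global2} and its local analogue together with Saito--Tunnell---$H_1\cdot\Pi$ and $H_3\cdot\Pi$ capture respectively the locally $E_v^1$-distinguished and the globally $\A_E^1$-distinguished members inside the restriction of $\Pi^+$.

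With these identifications in place, Theorem \ref{9.1} reduces to proving that the finite quotient $(H_1\cap H_2)/H_3$ is trivial, equivalently that the restriction map of character groups $X(H_0/H_2) \rightarrow X(H_1/H_3)$ is surjective. Given a character $\chi$ of $\A_F^\times$ that is trivial on $H_3$, the task is therefore to upgrade $\chi$ to a Gr\"ossencharacter of $\A_F^\times/F^\times$ which is a self-twist of $\widetilde{\Pi}$. The crucial manoeuvre is now the base-change analogue of the Asai argument used in the proof of Theorem \ref{thm3.1}: choose a Gr\"ossencharacter $\widehat{\chi}$ of $\A_F^\times/F^\times$ extending the character $\chi$ of $\A_F^\times/F^\times(\A_F^\times \cap G_\Pi)$, and observe that since $\chi$ is trivial on $G_\Pi$, the character $\widehat{\chi}\circ \N m$ is trivial on the image of $G_\Pi$ in $\A_E^\times$, so
\begin{eqnarray*}
\mathrm{BC}(\widetilde{\Pi}\otimes \widehat{\chi}) \;\cong\; \mathrm{BC}(\widetilde{\Pi})\otimes (\widehat{\chi}\circ \N m) \;\cong\; \mathrm{BC}(\widetilde{\Pi}).
\end{eqnarray*}
The classical description of the fibers of cuspidal base change along $E/F$---two cuspidal representations of $\GL_2(\A_F)$ have isomorphic base change to $\GL_2(\A_E)$ if and only if they differ by a twist by $\omega_{E/F}$---then forces $\widetilde{\Pi}\otimes \widehat{\chi} \cong \widetilde{\Pi}$ or $\widetilde{\Pi}\otimes \widehat{\chi} \cong \widetilde{\Pi}\otimes \omega_{E/F}$. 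In either case, multiplying $\widehat{\chi}$ by $\omega_{E/F}$ if necessary produces a Gr\"ossencharacter self-twist of $\widetilde{\Pi}$, i.e.\ an element of $X(H_0/H_2)$, whose restriction to $H_1$ agrees with $\chi$, completing the surjectivity.

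The main obstacle I anticipate is the precise verification that $H_1\cdot\Pi$ is really the set of all $\SL_2$-components of $\Pi^+|_{\SL_2(\A_F)}$ which are locally $E_v^1$-distinguished at every place, and that $H_3\cdot\Pi$ is exactly the globally $\A_E^1$-distinguished members: this requires combining the local form of Proposition \ref{global2} with the Hasse principle for norms from $E^\times$ to patch local $\N(E_v^\times)$-classes into a global $\N(E^\times)$-class. A secondary delicacy is the dihedral case in which $\widetilde{\Pi}$ itself is automorphically induced from a Gr\"ossencharacter of $\A_E^\times$, so that $\mathrm{BC}(\widetilde{\Pi})$ is no longer cuspidal; there the fiber analysis must be carried out directly on the Gr\"ossencharacter data, analogously to the CM analysis performed in the proof of Theorem \ref{1}.
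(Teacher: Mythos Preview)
Your overall strategy---set up the four subgroups $H_0,H_1,H_2,H_3$, reduce to the surjectivity of $X(H_0/H_2)\to X(H_1/H_3)$, and use the fibers of base change in place of the Asai lift---is exactly the paper's approach. However, there is a genuine gap in the execution, traceable to your definition of $H_3$.

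You set $H_3=\N(E^\times)\cdot F^\times\cdot G_\Pi$. Since $\N(E^\times)\subset F^\times$, this collapses to $F^\times G_\Pi=H_2$, and with that choice the problem becomes vacuous: any $\chi$ trivial on your $H_3$ is already a Gr\"ossencharacter trivial on $G_\Pi$, i.e.\ already lies in $X(H_0/H_2)$. The correct group (and the one the paper uses) is $H_3=\N(E^\times)\cdot G_\Pi$, and with it the character $\chi$ you start from is trivial on $\N(E^\times)$ and on $G_\Pi$ but has \emph{no reason} to be trivial on $F^\times$. This is precisely what must be arranged.

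The circularity then surfaces in your ``crucial manoeuvre'': you propose to choose a Gr\"ossencharacter $\widehat{\chi}$ of $\A_F^\times/F^\times$ extending $\chi$. But producing such a $\widehat{\chi}$ (agreeing with $\chi$ on $H_1$ and trivial on $F^\times$) is exactly the surjectivity you are trying to prove. The step the paper supplies---and which your proposal omits---is the following descent trick: because $\chi$ is trivial on $\N(E^\times)$, the composite $\chi\circ\N$ is automatically a Gr\"ossencharacter on $\A_E^\times/E^\times$, even though $\chi$ itself is not one on $\A_F^\times/F^\times$. Being of the form $\chi\circ\N$, it is Galois-invariant and hence descends to a genuine Gr\"ossencharacter $\mu$ of $\A_F^\times/F^\times$ with $\mu\circ\N=\chi\circ\N$. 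Only now does the base-change fiber argument apply: from
\[
\mathrm{BC}(\widetilde{\Pi})\;\cong\;\mathrm{BC}(\widetilde{\Pi})\otimes(\chi\circ\N)\;=\;\mathrm{BC}(\widetilde{\Pi})\otimes(\mu\circ\N)\;\cong\;\mathrm{BC}(\widetilde{\Pi}\otimes\mu)
\]
one concludes $\widetilde{\Pi}\otimes\mu\cong\widetilde{\Pi}$ or $\widetilde{\Pi}\otimes\mu\omega_{E/F}\cong\widetilde{\Pi}$, and in either case the resulting Gr\"ossencharacter self-twist restricts to $\chi$ on $H_1$ (since $\mu$ and $\chi$ agree on $\N(\A_E^\times)$ by $\mu\circ\N=\chi\circ\N$, and both are trivial on $G_\Pi$). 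Your concern about the dihedral case is not the obstacle here: the description of the fibers of cuspidal base change holds whether or not $\mathrm{BC}(\widetilde{\Pi})$ remains cuspidal. The real missing ingredient is the passage from $\chi$ to the Gr\"ossencharacter $\mu$ via $\chi\circ\N$.
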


\begin{proof} Define groups analogous to the ones defined  
in the previous section:
\begin{eqnarray*}
H_0 &=& \A_F^\times, \\
H_1 & = & \N(\A_E^\times) G_\Pi, \\
H_2 & =& F^\times G_{\Pi}, \\
H_3 & = &  {\N(E^\times)} G_{\Pi}.
\end{eqnarray*} Let $\Pi$ be an automorphic representation of $\SL_2(\A_F)$ 
contained in $\Pi^+ = \otimes_v \Pi_v^+$ which is globally distinguished 
by $\A_E^1$; its 
existence follows  from Proposition \ref{global1}. Clearly, automorphic representations of 
$\SL_2(\A_F)$ of the form $H_3 \cdot \Pi$ are globally distinguished by 
$\A_E^1$, whereas representations of the form $H_1 \cdot \Pi$ are 
all the irreducible components of $\Pi^+$ restricted to $\SL_2(\A_F)$, and
among these, representations of the form $H_2 \cdot \Pi$ are automorphic.
Thus the following result  proves the theorem. \end{proof}

\begin{theorem}\label{11.2}
The group 
$(H_1\cap H_2)/H_3$  is trivial. 
\end{theorem}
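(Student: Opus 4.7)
The plan is to adapt the proof of Theorem \ref{thm3.1}, with quadratic base change ${\rm BC}$ from $\GL_2(\A_F)$ to $\GL_2(\A_E)$ in the role played there by the Asai lift, and the two-element fibers of base change in the role played by Proposition \ref{prop5.4}. As in Section 7, I will pass to character groups: the subgroup $(H_1 \cap H_2)/H_3$ is the kernel of the natural map $H_1/H_3 \to H_0/H_2$, so by Pontryagin duality (all groups in sight are closed subgroups of $\A_F^\times$), it suffices to prove that the restriction $X(H_0/H_2) \to X(H_1/H_3)$ is surjective. A character of $H_1/H_3$ is a character $\chi$ of $\N(\A_E^\times)$ trivial on $\N(E^\times)\cdot (G_\Pi \cap \N(\A_E^\times))$, while a character of $H_0/H_2$ is a Gr\"ossencharacter of $\A_F^\times/F^\times$ that is simultaneously a self-twist of some (equivalently, any) cuspidal $\widetilde{\Pi}$ of $\GL_2(\A_F)$ containing $\Pi^+$.

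Given such a $\chi$, I would first extend it to a character $\widetilde{\chi}$ of $\A_F^\times$ trivial on $G_\Pi$, which is possible because $G_\Pi$ is open in $\A_F^\times$ and the quotient $\A_F^\times/G_\Pi$ is discrete. Then $\widetilde{\Pi}\otimes \widetilde{\chi} \cong \widetilde{\Pi}$. Setting $\widehat{\chi} = \widetilde{\chi}\circ \N$, the vanishing of $\widetilde{\chi}$ on $\N(E^\times)$ makes $\widehat{\chi}$ a Gr\"ossencharacter of $\A_E^\times/E^\times$, and the compatibility of base change with twisting yields ${\rm BC}(\widetilde{\Pi})\otimes \widehat{\chi} = {\rm BC}(\widetilde{\Pi}\otimes \widetilde{\chi}) \cong {\rm BC}(\widetilde{\Pi})$. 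Next, the Hasse norm theorem for the quadratic extension $E/F$ gives $F^\times \cap \N(\A_E^\times) = \N(E^\times)$, so $\chi$ descends to a character of $F^\times \N(\A_E^\times)$ trivial on $F^\times$, which extends to a Gr\"ossencharacter $\eta$ of $\A_F^\times/F^\times$ in exactly two ways, differing by $\omega_{E/F}$. Since $\omega_{E/F}\circ \N \equiv 1$, both such $\eta$ satisfy $\eta\circ \N = \widehat{\chi}$.

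Thus ${\rm BC}(\widetilde{\Pi}\otimes \eta) \cong {\rm BC}(\widetilde{\Pi})$ for either choice of $\eta$. The known description of the fibers of quadratic base change for $\GL_2$ --- of size at most $2$, with $\widetilde{\Pi}$ and $\widetilde{\Pi}\otimes \omega_{E/F}$ giving the same lift --- forces $\widetilde{\Pi}\otimes \eta \cong \widetilde{\Pi}$ or $\widetilde{\Pi}\otimes \eta \cong \widetilde{\Pi}\otimes \omega_{E/F}$. In the first case $\eta|_{G_\Pi}=1$; in the second $(\eta\omega_{E/F})|_{G_\Pi}=1$. Either way, one of the two Gr\"ossencharacter extensions of $\chi$ is a self-twist of $\widetilde{\Pi}$ and hence lies in $X(H_0/H_2)$, providing the desired preimage of $\chi$.

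The key point to verify --- the analogue of the two-case output of Proposition \ref{prop5.4} --- is the happy coincidence that the $\Z/2$ ambiguity in extending $\chi$ to a Gr\"ossencharacter of $\A_F^\times/F^\times$ and the $\Z/2$ fiber ambiguity of base change are both governed by $\omega_{E/F}$, so that the right extension is always available. This is precisely the structural feature that, mutatis mutandis, made the proof of Theorem \ref{thm3.1} work; the only mild new input here is the Hasse norm theorem, used to certify that $\chi$ admits a Gr\"ossencharacter extension at all.
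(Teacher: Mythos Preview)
Your proof is correct and follows essentially the same route as the paper's. Both arguments reduce to the surjectivity of $X(H_0/H_2)\to X(H_1/H_3)$, extend $\chi$ to a (not necessarily automorphic) self-twist character of $\A_F^\times$, observe that $\chi\circ \N$ is a genuine Gr\"ossencharacter of $\A_E^\times/E^\times$, replace it by $\mu\circ\N$ (your $\eta\circ\N$) for a Gr\"ossencharacter $\mu$ of $\A_F^\times/F^\times$, and then invoke the two-element fibers of quadratic base change to see that one of $\mu,\ \mu\omega_{E/F}$ is the desired Gr\"ossencharacter self-twist extending $\chi$. The only cosmetic difference is that you make the construction of $\eta$ explicit via the Hasse norm theorem ($F^\times\cap \N(\A_E^\times)=\N(E^\times)$), whereas the paper phrases the same step as ``descending the Galois-invariant Gr\"ossencharacter $\chi\circ\N$ to $\A_F^\times/F^\times$''; these are the same argument in different clothing.
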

\begin{proof}
We will prove that $(H_1\cap H_2)/H_3$ is trivial by proving that its character
group is trivial. 
Noting that 
$(H_1\cap H_2)/H_3$ 
is nothing but the kernel of the map,
$$H_1/H_3 \rightarrow H_0/H_2,$$
the character group of $(H_1\cap H_2)/H_3$ is the cokernel of
 the natural map $$X(H_0/H_2) \rightarrow X(H_1/H_3).$$
Therefore to prove the theorem, it suffices to prove
 the surjectivity of the natural map $$X(H_0/H_2) \rightarrow X(H_1/H_3).$$
Equivalently, we need to prove that 
 a character of $(\N(\A_E^\times)G_\Pi)/[\N(E^\times) G_\Pi]$, can be 
extended to a Gr\"ossencharacter of $\A_F^\times$ which is a self-twist
of $\tilde{\Pi}$.

Since $G_\Pi$ and hence $\N(E^\times) G_\Pi$ is an open subgroup of $\A_F^\times$, 
$\A_F^\times/[\N(E^\times) G_\Pi]$ is a discrete group, hence  a character $\chi$ of $[\N(\A_E^\times)G_\Pi]/[\N(E^\times) G_\Pi]$
can be thought of as a character of $\A_F^\times/[\N(E^\times) G_\Pi]$, so that
$\widetilde{\Pi} \cong \widetilde{\Pi} \otimes \chi$.
Our aim is to  eventually get one which is a Gr\"ossencharacter.

Let ${\rm BC}(\widetilde{\Pi})$ denote the base change lift of the  representation $\widetilde{\Pi}$ 
of $\GL_2(\A_F)$ to $\GL_2(\A_E)$. By local considerations, it is clear that 
\begin{eqnarray}\label{(3)}  
{\rm BC}(\widetilde{\Pi}) \cong  {\rm BC}(\widetilde{\Pi} \otimes {\chi}) \cong 
 {\rm BC}(\widetilde{\Pi}) \otimes \chi\circ \N.  
\end{eqnarray}

Note that although we do not know that $\chi$ is a  Gr\"ossencharacter 
on $\A_F^\times,$ but since it is trivial on $\N(E^\times)$, the character
$\chi \circ \N$ 
of $\A_E^\times$ is a  
Gr\"ossencharacter on $\A_E^\times/E^\times$. 
Further, the Gr\"ossencharacter 
$\chi \circ \N$ on $\A_E^\times/E^\times$ is 
naturally Galois-invariant. Therefore, the Gr\"ossencharacter 
$\chi \circ \N$ 
on $\A_E^\times/E^\times$ can be descended to a 
Gr\"ossencharacter, say $\mu$ on $\A_F^\times/F^\times$, i.e.,
$$\chi \circ \N = \mu \circ \N.$$ 
So the equation $(3)$ can be rewritten as,
\begin{eqnarray}\label{(4)}
  {\rm BC}(\widetilde{\Pi}) \cong  {\rm BC}(\widetilde{\Pi} \otimes {\chi}) 
\cong 
 {\rm BC}(\widetilde{\Pi}) \otimes \chi\circ \N  \cong {\rm BC}(\widetilde{\Pi}) \otimes \mu\circ \N \cong 
{\rm BC}(\widetilde{\Pi} \otimes {\mu}).  
\end{eqnarray}
This gives,
$$  {\rm BC}(\widetilde{\Pi}) \cong {\rm BC}(\widetilde{\Pi} \otimes {\mu}). $$

Just like the previous case dealing with Asai lift, appealing 
now to --- this time a much better known theorem --- about fibers of the base change map,
we find that either,
\begin{enumerate}
\item $\widetilde{\Pi}  \cong  \widetilde{\Pi} \otimes \mu$, or 

\item $\widetilde{\Pi} \otimes \omega_{E/F} \cong  \widetilde{\Pi} \otimes \mu.$
\end{enumerate}

In case (i), the character $\mu$ is trivial on $G_\Pi$ (by the very definition of $G_\Pi$),
and since $\chi \circ \N = \mu \circ \N,$ we find that $\chi$ and $\mu$ are the same on the subgroup
$\N(\A_E^\times)$ of $\A_F^\times$, therefore the character $\mu$  on $\A_F^\times/F^\times$
is the desired extension of the character
$\chi$ initially defined on $(\N(\A_E^\times)G_\Pi)/[\N(E^\times) G_\Pi]$.

In case (ii), the character $\mu\omega_{E/F}$ is trivial on $G_\Pi$,
and since $\chi \circ \N = \mu \circ \N,$ we find that $\chi$ and $\mu\omega_{E/F}$ 
are the same on the subgroup
$\N(\A_E^\times)$ of $\A_F^\times$, therefore the character $\mu\omega_{E/F}$ on 
$\A_F^\times/F^\times$
is the desired extension of the character
$\chi$ initially defined on $(\N(\A_E^\times)G_\Pi)/[\N(E^\times) G_\Pi]$. \end{proof}

\noindent{\bf Remark:} Theorem \ref{9.1} holds true in the 
analogous division algebra case, and the proof is the same after we have noted that the group $H_0 = \A_F^\times$ which is used as a subgroup
of $\GL_2(\A_F)$ can also be treated as a quotient group via the determinant map, and then it once again operates
on $\SL_2(\A_F)$ via conjugation, well-defined up to inner-automorphisms, so also on its representations; this then allows
one to define $H_0$ for $D^\times(\A_F)$ as the image of the reduced norm mapping, and $H_1,H_2,H_3$ as subgroups of this norm
mapping. The appeal to basechange from $\GL_2(\A_F)$ to $\GL_2(\A_E)$ can be done using Jacquet-Langlands correspondence
from automorphic representations of $D^\times(A_F)$ to automorphic representations of $\GL_2(\A_F)$ and then to $\GL_2(\A_E)$.

\vspace{2mm}

The strategy in the present paper to come to grips with those automorphic representations 
of $\SL_2(\A_F)$ in a given global $L$-packet which have nonzero period integral 
for a given embedding of $E^1(\A_F)$ inside $\SL_2(\A_F)$ is to prove that such global
packets which have no local obstructions for non-vanishing are conjugate to each other by an
element of $\N E^\times \subset F^\times$ instead of just being conjugate by $F^\times$ which is the case as they belong to 
the same $L$-packet. The following lemma suggests that this strategy will
not succeed in the presence of certain principal series components, which one may call {\it supersingular primes},
being analogues of supersingular primes for elliptic curves.

\begin{lem}Let $K$ be a quadratic unramified extension of a local 
field $k$ of odd residue characteristic. Let $\mu$ be an unramified
character of $k^\times$ of order 4 with $\mu^2=\omega_{K/k}$. Then the  
principal series representation  $\pi={\rm Ps}(\mu,\mu\omega_{K/k})$ of $\GL_2(k)$
decomposes as a sum of two irreducible representations $\pi^+$ and $\pi^-$
when restricted to $\GL_2^+(k)$ in which $\pi^+$ is the one which is spherical,
i.e., contains a vector fixed under $\GL_2(\O_k)$. Fix an embedding of $K^\times$ in $\GL^+_2(k)$ such that 
$K^\times \subset k^\times \cdot \GL_2(\O_k)$, 
then the trivial 
representation of $K^\times$ appears in $\pi^+$, and the 
ramified character of order 2 of $K^\times/k^\times$ appears in $\pi^-$.
\end{lem}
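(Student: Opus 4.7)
The plan is to adapt the argument of Lemma~8.2 to the unramified setting, replacing its Iwahori/Steinberg analysis of the reduction modulo $\varpi_K$ with a reduction-modulo-$\varpi$ analysis of the depth-zero part of $\pi$.  First I would check that the central character of $\pi=\mathrm{Ps}(\mu,\mu\omega_{K/k})$ is $\mu^{2}\omega_{K/k}=\omega_{K/k}^{2}=1$, and that the Weyl swap gives $\pi\otimes(\omega_{K/k}\circ\det)\cong\pi$, so that $\pi|_{\GL_{2}^{+}(k)}$ genuinely splits as $\pi^{+}\oplus\pi^{-}$ via the $\pm 1$ eigenspaces of a self-twist intertwiner $T$.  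Since $K/k$ is unramified, $\O_{K}$ is free of rank~$2$ over $\O_{k}$ and $K^{\times}=\varpi^{\mathbb{Z}}\O_{K}^{\times}$, so multiplication on $\O_{K}$ embeds $K^{\times}$ into $k^{\times}\cdot\GL_{2}(\O_{k})$; because the spherical vector $v^{+}\in\pi^{+}$ is $\GL_{2}(\O_{k})$-fixed and the central character is trivial, the trivial character of $K^{\times}$ appears in $\pi^{+}$ through $v^{+}$, handling the first half of the claim.

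Next, set $K_{0}=\GL_{2}(\O_{k})$ and $K_{1}=1+\varpi M_{2}(\O_{k})$.  Because both $\mu$ and $\mu\omega_{K/k}$ are unramified, the depth-zero subspace $\pi^{K_{1}}$ is, via Iwasawa decomposition, the inflation to $K_{0}/K_{1}=\GL_{2}(\F_{q})$ of the principal series $\mathrm{Ind}_{B(\F_{q})}^{\GL_{2}(\F_{q})}\mathbf{1}\cong \mathbf{1}\oplus\mathrm{St}$, where $\mathrm{St}$ is the Steinberg representation.  The intertwiner $T$ commutes with $\GL_{2}^{+}(k)\supset K_{0}$, so the splitting $\pi=\pi^{+}\oplus\pi^{-}$ restricts to a splitting of $\pi^{K_{1}}$ into $K_{0}$-subrepresentations, which must refine the decomposition $\mathbf{1}\oplus\mathrm{St}$; the trivial summand contains $v^{+}$ and hence sits in $\pi^{+}$.

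To force $\mathrm{St}\subset \pi^{-}$, I would count Iwahori-fixed vectors.  For a generic irreducible unramified principal series, $\pi^{I_{0}}$ is two-dimensional, and its two natural basis vectors --- the spherical vector $v^{+}$ for $K_{0}$, and $v^{-}=\pi(g_{0})v^{+}$, which is the spherical vector for $g_{0}K_{0}g_{0}^{-1}$, where $g_{0}=\mathrm{diag}(\varpi,1)\in\GL_{2}(k)\setminus\GL_{2}^{+}(k)$ --- are linearly independent.  Since $T\pi(g_{0})v^{+}=\omega_{K/k}(\varpi)\pi(g_{0})Tv^{+}=-\pi(g_{0})v^{+}$, the vector $v^{-}$ lies in $\pi^{-}$, so $(\pi^{\pm})^{I_{0}}$ is one-dimensional on each side.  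If $\mathrm{St}$ also sat in $\pi^{+}$, then $(\pi^{+})^{I_{0}}$ would be at least two-dimensional (since $\mathrm{St}^{B(\F_{q})}$ is one-dimensional in addition to the line from $\mathbf{1}$), contradicting the count; thus $\mathrm{St}\subset\pi^{-}$.

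Finally, the image of $\O_{K}^{\times}\subset K^{\times}$ in $\GL_{2}(\F_{q})$ under our chosen embedding is exactly the non-split torus $\F_{q^{2}}^{\times}$, and the standard character-table computation gives $\mathrm{St}|_{\F_{q^{2}}^{\times}}=\bigoplus_{\theta\neq 1}\theta$, the sum of all non-trivial characters of $\F_{q^{2}}^{\times}/\F_{q}^{\times}$, each with multiplicity one.  Since $q$ is odd, $\F_{q^{2}}^{\times}/\F_{q}^{\times}$ is cyclic of even order $q+1$ and has a unique character of order~$2$; this character pulls back to the unique quadratic character of $K^{\times}/k^{\times}$ (uniqueness because odd residue characteristic kills quadratic characters of the pro-$p$ group $(1+\varpi\O_{K})/(1+\varpi\O_{k})$) and is ramified since it is non-trivial on $\O_{K}^{\times}$.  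It appears in $\mathrm{St}\subset (\pi^{-})^{K_{1}}$, hence in $\pi^{-}$, which is what was to be shown.  The main obstacle is deciding which irreducible $\GL_{2}(\F_{q})$-constituent of $\pi^{K_{1}}$ lies in $\pi^{+}$ and which in $\pi^{-}$; this is resolved by the Iwahori dimension count above, which crucially exploits the concrete realization of $\pi^{-}$ as the $g_{0}$-translate of $\pi^{+}$ inside the ambient representation $\pi$ of $\GL_{2}(k)$.
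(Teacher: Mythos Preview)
Your argument is correct and follows the same route as the paper: the trivial $K^\times$-character sits in $\pi^+$ via the spherical vector, the Steinberg of $\GL_2(\F_q)$ lands in $(\pi^-)^{K_1}$, and its restriction to $\F_{q^2}^\times$ picks up the ramified quadratic character of $K^\times/k^\times$. The paper is slightly terser at the middle step --- rather than your Iwahori dimension count, it simply observes that $\pi(g_0)v^+\in\pi^-$ is Iwahori-fixed and that Steinberg is the only non-trivial irreducible of $\PGL_2(\F_q)$ with a Borel-fixed vector --- but the substance is the same.

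One small slip worth fixing: with $g_0=\mathrm{diag}(\varpi,1)$, the vector $\pi(g_0)v^+$ is fixed by $g_0K_0g_0^{-1}$, and $K_0\cap g_0K_0g_0^{-1}$ is the \emph{opposite} Iwahori $\overline{I_0}$ (upper-right entry divisible by $\varpi$), not the standard $I_0=\Gamma_0(\varpi)$. Either replace $g_0$ by $g_0^{-1}$ (so that $I_0\subset g_0^{-1}K_0g_0$), or run your count with $\overline{I_0}$ and $\overline{B}(\F_q)$; either way the argument goes through unchanged, since Steinberg has a fixed vector under any Borel.
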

\begin{proof} Let $\varpi$ be a uniformizing element in $k$, and $\O_k, \O_K$ 
be respectively the maximal compact subrings of $k$ and $K$. 
Since $ K^\times\subset k^\times \cdot \GL_2(\O_k)$,  $\pi^+$ trivial central character,  and $\pi^+$ has a vector fixed under $\GL_2(\O_k)$, 
the trivial 
representation of $K^\times$ appears in $\pi^+$. The representation 
$\pi^-$ is obtained from $\pi^+$ by conjugating by the matrix,
$$\left (\begin{array}{cc} \varpi & 0 \\ 0 & 1\end{array} \right ),$$
hence it is clear that $\pi^-$ has a subrepresentation on which $$\Gamma_0(\varpi) = \left \{ \left (\begin{array}{cc} a & b \\ c & d\end{array} \right ) \in \GL_2(\O_k) \bigg | \, \,\varpi | c    \right \}$$
acts trivially. This means that $\pi^-$ must contain the Steinberg representation
of $\GL_2(\F_q)$ where $\F_q$ is the residue field of $k$ as the Steinberg is the only 
non-trivial irreducible representation of $\PGL_2(\F_q)$ with a fixed vector under the group
of upper triangular matrices. Since the Steinberg representation contains all non-trivial characters of
${\F_{q^2}^\times}/{\F_q ^\times}$, the
conclusion about the ramified character of order 2 of 
$K^\times/k^\times$ appearing in $\pi^-$ follows. \end{proof}

It should be noted that for an automorphic representation $\widetilde{\Pi}$ of 
$\GL_2(\A_F)$ of trivial central character, at a place $v_0$ of odd 
residue characteristic where  $\widetilde{\Pi}_0$ is unramified, the representation
$\widetilde{\Pi}_0$ 
decomposes when restricted to $\GL_2^+(F_0)$ if and only if 
$\widetilde{\Pi}_0$ is as in the previous lemma, i.e., the  principal series
representation $\pi={\rm Ps}(\mu,\mu\omega_{K/k})$ with $\mu^2 = \omega_{K/k}$. 
These are what are called supersingular primes in the classical language, and are interpreted by vanishing of the Fourier-coefficient: $a_v=0$. It is expected
that for non-CM modular forms of weight $\geq 4$, there are only finitely many 
supersingular primes (for arbitrary $F$); for example, a famous
 conjecture of Lehmer
asserts that there are no supersingular primes for the Ramanujan Delta function.
Thus the following theorem is not without content, although its applicability
at the moment is only theoretical; besides, its proof also depends on
Conjecture \ref{conj}  about simultaneous nonvanishing of $L$-values.

\begin{theorem}\label{thm11.4}
Let $\widetilde{\Pi}$ be  an automorphic representation 
of $\GL_2(\A_F)$ with trivial central character, and with 
a discrete series local component at an odd place, say $v_0$, and only 
finitely many supersingular primes.  Then 
any automorphic representation $\Pi^+$ of $\GL_2^+(\A_F)$ contained in $\widetilde{\Pi}$ which is 
locally distinguished by $E^1(\A_F)$
is globally $\lambda$-distinguished for a character $\lambda$ 
of ${\A_E^\times/E^\times\A_F^\times}$ of order 2, and hence by Theorem \ref{9.1}, local-global
principle holds for automorphic representations of $\SL_2(\A_F)$ contained in 
$\widetilde{\Pi}$ for the subgroup $\A^1_E$.
\end{theorem}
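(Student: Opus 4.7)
The plan is to reduce Theorem~\ref{thm11.4} to Theorem~\ref{9.1} by producing, via the toric-period machinery of Waldspurger, a globally $\A_E^\times$-distinguished automorphic summand $\Pi^{+\prime}$ of $\widetilde{\Pi}|_{\GL_2^+(\A_F)}$; the argument is the split-$\GL_2$ analogue of the proof of Theorem~\ref{3}.

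First I would translate the local hypothesis. Each $\Pi^+_v$ being $E_v^1$-distinguished forces $\widetilde{\Pi}_v$ to be $(\beta_v\circ\N m)$-distinguished by $E_v^\times$ for some quadratic character $\beta_v$ of $F_v^\times$, and Saito--Tunnell gives
\[ \epsilon(\widetilde{\Pi}_v\otimes\beta_v)\,\epsilon(\widetilde{\Pi}_v\otimes\omega_v\beta_v)=\omega_v(-1); \]
since $\widetilde{\Pi}$ lives on $\GL_2(\A_F)$ (and not on a ramified quaternion algebra), the product over all $v$ is $+1$. Using the square-integrable component at $v_0$ (of odd residue characteristic) together with Lemma~\ref{lemma8.3}, I would next modify $\beta$ into a quadratic Gr\"ossencharacter $\eta$ satisfying
\[ \epsilon(\widetilde{\Pi}\otimes\eta)=1=\epsilon(\widetilde{\Pi}\otimes\omega_E\eta) \]
with the same local epsilon products as $\beta$ at every place, subject to the further constraint $\eta_v=\beta_v$ at $v_0$, at the archimedean places, at the ramified places of $\widetilde{\Pi}$, and at every supersingular prime. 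The supersingular constraint is essential: by Lemma~\ref{lem7.1} in its $\GL_2(k)$-versus-$K^\times$ form (the one given by Lemma 11.3 above), at a supersingular prime $\widetilde{\Pi}_v|_{\GL_2^+(F_v)}$ splits as $\pi^+_v\oplus\pi^-_v$ and the $E_v^1$-distinction hypothesis pins $\Pi^+_v$ to the spherical summand; modifying $\beta_v$ would transfer the $(\eta'\circ\N m)$-distinction to the wrong summand of $\widetilde{\Pi}_v|_{\GL_2^+(F_v)}$. The finiteness-of-supersingular-primes hypothesis is precisely what makes this list of local constraints finite and hence compatible with the single sign change at $v_0$ afforded by Lemma~\ref{lemma8.3}.

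I would then apply Conjecture~\ref{conj} to the pair $(\widetilde{\Pi},\widetilde{\Pi}\otimes\omega_E)$ to produce a quadratic Gr\"ossencharacter $\eta'$ agreeing with $\eta$ at the prescribed finite set and with
\[ L(\tfrac12,\widetilde{\Pi}\otimes\eta')\,L(\tfrac12,\widetilde{\Pi}\otimes\omega_E\eta')\neq 0. \]
Since $L(s,\mathrm{BC}(\widetilde{\Pi}\otimes\eta'))=L(s,\widetilde{\Pi}\otimes\eta')\,L(s,\widetilde{\Pi}\otimes\omega_E\eta')$, Waldspurger's theorem yields a nonzero $\A_E^\times$-period of $\widetilde{\Pi}$ against the order-$2$ character $\lambda:=\eta'\circ\N m$ of $\A_E^\times/E^\times\A_F^\times$. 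This nonvanishing localizes on an automorphic summand $\Pi^{+\prime}$ of $\widetilde{\Pi}|_{\GL_2^+(\A_F)}$ which, by the preserved supersingular choices, coincides with the given $\Pi^+$ at every place where $\widetilde{\Pi}_v|_{\GL_2^+(F_v)}$ is reducible. Theorem~\ref{9.1} applied to $\Pi^{+\prime}$ then propagates the global $\lambda$-distinction to every locally distinguished member of the $\SL_2(\A_F)$-packet inside $\Pi^+$, and in particular establishes the $\A_E^1$-local-global conclusion asserted.

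The main obstacle is the bookkeeping in the second paragraph: simultaneously flipping the two global root numbers by a twist localized at $v_0$ while preserving the spherical choice at every supersingular prime. The finiteness-of-supersingular-primes hypothesis is what renders this finite-dimensional problem tractable, and the reliance on Conjecture~\ref{conj} is what keeps Theorem~\ref{thm11.4} conditional.
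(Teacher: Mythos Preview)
Your overall architecture matches the paper's: globalize the local distinguishing characters, adjust at $v_0$ via Lemma~\ref{lemma8.3} to force both global root numbers to be $+1$, invoke Conjecture~\ref{conj}, and apply Waldspurger. But there is a genuine inconsistency in your second paragraph which hides a missing ingredient.

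You write that $\eta$ is obtained from $\beta$ ``using Lemma~\ref{lemma8.3}'' at $v_0$, yet simultaneously impose the constraint $\eta_{v_0}=\beta_{v_0}$. These are incompatible: the entire content of Lemma~\ref{lemma8.3} is to \emph{replace} $\beta_{v_0}$ by a different quadratic character $\gamma_0$ in order to flip the sign of $\epsilon(\widetilde{\Pi}_{v_0}\otimes\beta_{v_0})/\beta_{v_0}(-1)$. When $\epsilon(\widetilde{\Pi}\otimes\beta)=-1$ you \emph{must} take $\eta_{v_0}\neq\beta_{v_0}$ (this is exactly how Lemma~\ref{lemma8.1} is proved). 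So $v_0$ cannot be in your list of frozen places; the paper's set $S$ where $\eta$ matches $\lambda$ explicitly excludes $v_0$.

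This is not a cosmetic slip, because $\widetilde{\Pi}_{v_0}$ is discrete series and may be reducible on $\GL_2^+(F_{v_0})$ (precisely when it has a self-twist by $\omega_{E_{v_0}/F_{v_0}}$). Once $\eta_{v_0}\neq\beta_{v_0}$, and hence possibly $\mu_{v_0}\neq\beta_{v_0}$ after applying Conjecture~\ref{conj}, you do not yet know that $\mu_{v_0}\circ\N m$ lands on the \emph{same} summand $\Pi^+_{v_0}$ that $\beta_{v_0}\circ\N m$ did. Your claim that $\Pi^{+\prime}$ coincides with $\Pi^+$ ``at every place where $\widetilde{\Pi}_v|_{\GL_2^+(F_v)}$ is reducible'' is therefore unjustified at $v_0$. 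The paper closes this gap with a separate lemma (Lemma~\ref{Pra94}): an $\epsilon$-factor computation via Fr\"ohlich--Queyrut and the $\GL_2^+$ refinement of Saito--Tunnell showing that for a supercuspidal $\widetilde{\Pi}_{v_0}$ of trivial central character in odd residue characteristic, \emph{both} characters of $E_{v_0}^\times/F_{v_0}^\times E_{v_0}^{\times 2}$ appear on the same component of $\widetilde{\Pi}_{v_0}|_{\GL_2^+(F_{v_0})}$. With this in hand the change at $v_0$ is harmless and your final identification $\Pi^{+\prime}=\Pi^+$ goes through; without it, the argument is incomplete.
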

\begin{proof} 
Let $S$ be a finite set of places containing all 
ramified places of $\widetilde{\Pi}$, places of residue characteristic 2, 
as well as infinite places, and all the supersingular primes which we are assuming is a finite set. By the remarks above, the representation 
$\widetilde{\Pi}_v$ remains irreducible when restricted to $\GL_2^+(F_v)$ for places $v$ outside $S$. 
Since $\Pi^+$ is locally distinguished by $E^1(\A_F)$, it is $\lambda_v$-distinguished
for some characters $\lambda_v$ of $F_v^\times$ of order $\leq 2$. 
Globalize
these characters $\lambda_v$ for $v$ in $S$ to a quadratic character 
$\lambda$ of  $\A_F^\times/F^\times$ for which we then know that $\Pi^+$ is locally
$\lambda$-distinguished at all places of $F$ because of an easy observation that an irreducible
principal series representation of $\PGL_2(F_v)$ is $\lambda_v$-distinguished
for any quadratic character $\lambda_v$ of $E_v^\times/F_v^\times$.  
 By Lemma \ref{lemma8.1}, there are quadratic characters $\eta$ of $\A_F^\times/F^\times$ matching with
$\lambda$ at places in $S\backslash \{v_0\}$ such that the following
global epsilon factors are 1:
$$\epsilon(\widetilde{\Pi} \otimes \eta) = \epsilon(\widetilde{\Pi} \otimes \eta\omega_{E/F}) =1.$$ 
We are then in the context of Conjecture \ref{conj}  which gives a character $\mu$ 
of $\A_F^\times/F^\times$ of order 2 matching with $\eta$ at all places of $S$
such that 
\begin{eqnarray*}
L(\frac{1}{2}, \widetilde{\Pi} \otimes \mu) &\not =&  0, \\
L(\frac{1}{2}, \widetilde{\Pi} \otimes \mu\omega_{E/F}) & \not = & 0.
\end{eqnarray*}

Since Lemma \ref{lemma8.1} also guarantees
\[\epsilon(\widetilde{\Pi}_v\otimes \eta_v) \epsilon(\widetilde{\Pi}_v\otimes \omega_v\eta_v)=
\epsilon(\widetilde{\Pi}_v \otimes \lambda_v) \epsilon(\widetilde{\Pi}_v\otimes \omega_v\lambda_v)\]
at each place $v$ of $F$, by the Saito-Tunnell theorem, 
we see that $\tilde{\Pi}_{v_0}$ is both $\eta_{v_0}$-distinguished and $\lambda_{v_0}$-distinguished 
even if $\eta_{v_0} \neq \lambda_{v_0}$. By the theorem of Waldspurger, the above non-vanishing of $L$-values 
then implies that $\widetilde{\Pi}$ is globally $\mu$-distinguished.
By local multiplicity one, this is enough to conclude that $\Pi^+$ is globally $\mu$-distinguished, provided we know that $\Pi^+_{v_0}$ is 
$\mu_{v_0}$-distinguished.  We prove that $\Pi^+_{v_0}$ is $\mu_{v_0}$-distinguished by 
proving that both the characters of $\frac{E_v^\times}{F_v^\times E_v^{\times 2}}$ appear on the same 
component of the restriction of $\tilde{\Pi}_v$ to $\GL_2^+(F_v)$ (so the possible difference between $\mu$ and $\lambda$ at $v_0$ 
has no consequence for distinction question), and this follows from the following lemma.
\end{proof}

\begin{lem}\label{Pra94}
 Let $E/F$ be a quadratic extension of $p$-adic fields with $p$ odd. Let $\pi^+$ be a supercuspidal representation of $\GL_2^+(F)$ of trivial central character which is distinguished with respect to $E^1$. Let $\tilde{\pi}$ be a supercuspidal representation of $\GL_2(F)$ 
which is distinguished with respect to $E^\times$ such that $\pi^+$ occurs in the restriction of $\tilde{\pi}$ to $\GL_2^+(F)$. Then the 
multiplicity of the trivial representation of $E^1$ in $\pi^+$ is $2$. 
\end{lem}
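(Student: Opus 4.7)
Here is my plan. The approach is through a Fourier decomposition on $E^1\backslash E^\times$ that reduces the $E^1$-multiplicity to a sum of $E^\times$-multiplicities controlled by Saito--Tunnell. Since $\pi^+$ has trivial central character, any $E^1$-invariant linear form on $\pi^+$ is automatically invariant under $F^\times\cap \GL_2^+(F)$, so the space $\Hom_{E^1}(\pi^+,\C)$ decomposes under characters of the finite abelian group $E^\times/(F^\times E^1)$:
\[ \Hom_{E^1}(\pi^+,\C) \;=\; \bigoplus_{\eta}\Hom_{E^\times}(\pi^+,\eta).\]
In odd residue characteristic, $[F^\times:F^{\times 2}]=4$ and $[F^\times:\N(E^\times)]=2$, so $|E^\times/(F^\times E^1)|=2$; write the two characters as $1$ and $\eta_0$, with $\eta_0=\chi_0\circ \N$ for a quadratic character $\chi_0$ of $F^\times$ not in $\{1,\omega_{E/F}\}$. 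By local multiplicity one for the Saito--Tunnell pair $(\GL_2(F),E^\times)$, each summand is at most one-dimensional, and at least one is non-zero by the hypothesis that $\pi^+$ is $E^1$-distinguished. It remains to show both are exactly one-dimensional.

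I would first establish the analogous fact for $\tilde{\pi}$, namely $\dim\Hom_{E^\times}(\tilde{\pi},1)=\dim\Hom_{E^\times}(\tilde{\pi},\eta_0)=1$. The first equality is given. For the second, observe
\[\Hom_{E^\times}(\tilde{\pi},\eta_0)=\Hom_{E^\times}(\tilde{\pi}\otimes\chi_0^{-1},\C),\]
and that $\tilde{\pi}\otimes\chi_0^{-1}$ is again supercuspidal with trivial central character (as $\chi_0^2=1$). Since $\tilde{\pi}$ is $E^\times$-distinguished with trivial central character, its Langlands parameter must be induced from $W_E$, of the form $\sigma_{\tilde{\pi}}=\Ind_{W_E}^{W_F}\theta$ for a character $\theta$ of $W_E$ with $\theta|_{W_F^{\mathrm{ab}}}=\omega_{E/F}$; twisting produces $\Ind_{W_E}^{W_F}(\theta\cdot(\chi_0^{-1}\circ \N))$. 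The Saito--Tunnell dichotomy then reduces distinction to the sign of $\epsilon(1/2,\theta)\cdot\epsilon(1/2,\theta^\sigma)$ and to its twisted analogue. The main technical obstacle is to verify that twisting $\theta$ by $\chi_0^{-1}\circ \N$ preserves this sign; I expect this to come from stability of the relevant epsilon factors under quadratic twists coming from $F$, with odd residue characteristic entering essentially (in the spirit of Lemma 8.2, which provides the corresponding combinatorial input for odd $p$).

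Finally I would transfer from $\tilde{\pi}$ to $\pi^+$. Being dihedral from $E$, $\tilde{\pi}$ satisfies $\tilde{\pi}\cong\tilde{\pi}\otimes\omega_{E/F}$, whence $\tilde{\pi}|_{\GL_2^+(F)}=\pi^+\oplus\pi^{+,\prime}$ splits into two inequivalent irreducibles. By multiplicity one on $\tilde{\pi}$, each $E^\times$-invariant functional restricts non-trivially to exactly one of the two components. The claim is that both the $1$- and $\eta_0$-functionals land in the same component, which by the hypothesis must be $\pi^+$. To see this, consider the Galois-normalizer $n$ of $E^\times$ in $\GL_2(F)$: if $\det n\notin \N(E^\times)$, then conjugation by $n$ swaps $\pi^+$ and $\pi^{+,\prime}$, while preserving the (order $\leq 2$, hence $\sigma$-invariant) characters $1$ and $\eta_0$; an $E^\times$-invariant functional on $\tilde{\pi}$ would then have $n$-transport supported on the other component, producing two linearly independent $E^\times$-invariant functionals on $\tilde{\pi}$ with the same character, contradicting multiplicity one. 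Hence necessarily $n\in \GL_2^+(F)$, and conjugation by $n$ preserves each component, so the two invariant functionals can be compared component by component and must be realised on the same component. By the distinction hypothesis on $\pi^+$, that common component is $\pi^+$, giving $\dim\Hom_{E^1}(\pi^+,\C)=2$ as required.
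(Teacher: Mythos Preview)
Your overall strategy matches the paper's: decompose $\Hom_{E^1}(\pi^+,\C)$ over the two characters of $E^\times/(F^\times E^{\times 2})\cong\Z/2$, establish that both the trivial character and the nontrivial character $\mu$ (your $\eta_0$) appear in $\tilde\pi$ via Saito--Tunnell, and then argue that both land on the same component $\pi^+$ of $\tilde\pi|_{\GL_2^+(F)}$. The first two steps are fine; your appeal to the $X^{\nu,\nu'}$ combinatorics in odd residue characteristic for the second step is in the right spirit and is how the paper argues as well. The problem is the last step.

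Your argument with the Galois normalizer $n$ correctly shows that, under the hypotheses, $\det n\in N(E^\times)$ (equivalently $\omega_{E/F}(-1)=1$; this indeed excludes the ramified case with $q\equiv 3\pmod 4$, consistent with the paper). But from ``$n$ preserves each component'' you cannot conclude that the two functionals lie on the \emph{same} component. Transport by $n$ sends an $(E^\times,\eta)$-functional on $\pi^+$ to another $(E^\times,\eta^\sigma)=(E^\times,\eta)$-functional on the \emph{same} $\pi^+$; it relates each functional to a scalar multiple of itself, never the $\eta=1$ functional to the $\eta=\eta_0$ functional. So the sentence ``the two invariant functionals \ldots\ must be realised on the same component'' is an unsupported assertion, and this is precisely where the argument breaks down.

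The paper fills this gap with a genuinely different ingredient you have not invoked: the refinement of Saito--Tunnell to $\GL_2^+(F)$ proved in \cite{dipendra}. Writing $\tilde\pi=\Ind_{W_E}^{W_F}\chi$ with $\chi|_{F^\times}=\omega_{E/F}$, that theorem pins down which component of $\tilde\pi|_{\GL_2^+(F)}$ carries the $(E^\times,\mu)$-period by the sign of $\epsilon(\chi\mu,\psi)$, for $\psi$ a nontrivial character of $E/F$. The paper then proves the explicit identity $\epsilon(\chi\mu,\psi)=\epsilon(\chi,\psi)$ via the theorem of Fr\"ohlich--Queyrut together with a conductor analysis, treating the unramified and ramified cases separately (and using supercuspidality to exclude the degenerate conductor values). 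This epsilon-factor identity is the missing piece; your symmetry argument with $n$, while yielding the interesting byproduct $\omega_{E/F}(-1)=1$, is not a substitute for it.
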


\begin{proof}
Note that since $p$ is odd, we have $E^\times/F^\times E^{\times 2} = \Z/2$, 
and both the characters of $E^\times/F^\times E^{\times 2}$ 
do appear in $\tilde{\pi}$ because of an application of the theorem of Saito-Tunnell in 
conjunction with  Lemma \ref{lemma8.3}. Thus if $\tilde{\pi}$ does not split into a direct sum of two representations, the assertion is obvious. So we assume that $\tilde{\pi}$ restricts to $\pi^+ \oplus \pi^-$ on $\GL_2^+(F)$. We need to show that $\pi^+$ is $\mu$-distinguished as well, where $\mu$ is the nontrivial character of $E^\times/F^\times E^{\times 2}$. In 
this case, $\tilde{\pi}$  corresponds to a monomial representation of $W_F$ of the form 
${\rm Ind}_{W_E}^{W_F} \chi$ for a character $\chi: E^\times \rightarrow \C^\times$ with $\chi|_{_{F^\times}}=\omega_{_{E/F}}$. By the extension of the Saito-Tunnell theorem to $\GL_2^+(F)$ due to the second author (cf. Theorem 1.2, \cite{dipendra}), what we need to show is that 
\[\epsilon(\chi \mu,\psi)=\epsilon(\chi,\psi),\]
where we take $\psi$ to be a nontrivial character of $E/F$. Note that we can take $\psi$ to be unramified, i.e., trivial on 
$\O_E$ but not on $\varpi_E^{-1} \O_E$. 

We will prove the above equality by making use of a theorem of Fr\"ohlich-Queyrut \cite{fq} --- according to which $\epsilon(\tau,\psi)=1$ if $\tau|_{_{F^\times}}=1$ ---
 as well as the behaviour of degree one epsilon factors under unramified 
character twists. In the following, $f(\chi)$ denotes the conductor of the multiplicative character $\chi$.

Since $\chi|_{_{F^\times}}=\omega_{_{E/F}}$, we have
\[\epsilon(\chi\widetilde{\omega},\psi)=1\]
by the theorem of Fr\"ohlich-Queyrut, where $\widetilde{\omega}$ denotes an extension of $\omega_{_{E/F}}$ to $E^\times$. 
Suppose $E/F$ is unramified. Then, it follows that
\[\epsilon(\chi,\psi)=(-1)^{f(\chi)},\]
as we can choose $\widetilde{\omega}$ to be unramified.
Similarly, we get
\[\epsilon(\chi\mu,\psi)=(-1)^{f(\chi\mu)}.\]
Thus if $f(\chi)>1$, then $f(\chi\mu)=f(\chi)$ and the equality of the epsilon factors follows. The case $f(\chi)=0$ does not arise since this would mean that $\chi=\chi^\sigma$ which is not possible since $\tilde{\pi}$ is supercuspidal. If $f(\chi)=1$, we claim that once again $f(\chi\mu)=f(\chi)$, as the only other option is $f(\chi\mu)=0$, and this also implies that $\chi=\chi^\sigma$ since $\mu = \mu^\sigma$ 
by the uniqueness of the quadratic character of $E^\times/F^\times E^{\times 2}$.

Now suppose $E/F$ is ramified. This forces $\mu$ to be unramified. Therefore,
\[\epsilon(\chi\mu,\psi)=(-1)^{f(\chi)}\epsilon(\chi,\psi),\]
and thus we only need to note that $f(\chi)$ is even by our assumptions. Indeed, $f(\chi)$ is either even or $1$ since $E/F$ is ramified and $\chi|_{_{F^\times}}=\omega_{_{E/F}}$, and $f(\chi)=1$ is ruled out when $q\equiv 1 {\rm ~ mod ~} 4$ since in this case, $\chi$ 
is forced to be Galois invariant, hence $\tilde{\pi}$ cannot be supercuspidal. Also, $q$ cannot be $3 {\rm ~ mod ~} 4$, since in that case $\tilde{\pi}$ is neither distinguished nor $\mu$-distinguished as can be seen by an application of the 
Saito-Tunnell theorem. 
\end{proof}

Note that the above proof goes through and proves an analogous lemma in the division algebra case except that at the very last step, when $E/F$ is ramified and $q \equiv 3 {\rm ~ mod ~} 4$, 
$\epsilon(\chi \mu,\psi)=-\epsilon(\chi,\psi)$ if $f(\chi)=1$. However, for purposes
of the local-global principle this is no problem: if a character $\chi$ of $F^\times$ thought of as a character of 
$E^\times$ through the norm mapping appears in a representation $\pi^+$ of $D^+$, 
then clearly so does $\chi\omega_{E/F}$ (being the same character of $E^\times$). For $q\equiv 3 {\rm ~ mod ~} 4$, 
 \[\frac{\epsilon(\widetilde{\pi} \otimes \chi)}{\chi(-1)}   =  -\frac{\epsilon(\widetilde{\pi} \otimes \chi\omega_{E/F})}{\chi(-1) 
\omega_{E/F}(-1)}.
\]  
This gives required change of sign argument used earlier to prove local-global principle for $E^1(\A_F)$ contained in $\SL_1(D)(\A_F)$,
assuming finitely many supersingular primes, Conjecture 1.3 and one odd prime where the representation is discrete series; we omit the details.

\vspace{2mm}

\noindent{\bf Remark:} To prove Theorem \ref{thm11.4} without the finiteness
condition on supersingular primes, we will need a finer version of 
Conjecture \ref{conj} which has allowed us the existence of the quadratic character 
$\eta$ at the end of this theorem. The refinement would seek to construct
$\eta$ with prescribed behaviour inside $S$, 
which is unramified at those places outside  $S$ 
where $\Pi$ is supersingular. This is because as we noted earlier, the 
behaviour of $\eta$ outside of $S$ and the 
supersingular primes does not matter for 
distinction questions as the representation $\Pi_v$ of $\GL_2(F_v)$ remains 
irreducible when restricted to $\GL_2^+(F_v)$. 
At least in the non-CM case, since the
supersingular set is rather `thin', one hopes that this strengthening
may be possible.

\section{A final remark}
The two cases of the local-global principle studied in the paper relied on the 
Asai lift and the base change map. One  part of the argument had 
to do with the fibers of these functorial maps. The other part consisted in 
proving that for $E/F$ a quadratic extension of number fields, certain characters of
$\A_E^\times$ whose restriction to $\A_F^\times$ is a Gr\"ossencharacter is itself a 
Gr\"ossencharacter, if we know certain properties of the character under basechange 
or Asai lift as the case may be. It seems worthy to isolate these as a question.
Before we do this, it must be added that at the moment, automorphy of the 
tensor product $\Pi \boxtimes \Pi'$, or of the Asai lift, is known only in certain cases,  
so either the questions below could be asked for only those cases, or we should be willing
to grant these in general.

\vspace{4mm}

\noindent{\bf Question 1:} Suppose $E$ is a number field, and $\Pi = \otimes \Pi_v$
is an irreducible admissible representation of $\GL_n(\A_E)$, and 
$\Pi' = \otimes \Pi'_v$ is an automorphic representation of $\GL_m(\A_E)$.
\begin{enumerate}

\item Suppose that  $\Pi \boxtimes \Pi'$ 
is automorphic. Then is there an 
automorphic representation  $\Pi''$ of  $\GL_n(\A_E)$ with $\Pi'' \boxtimes \Pi' 
= \Pi \boxtimes \Pi'$? What are the various automorphic representations $\Pi''$ 
of $\GL_n(\A_E)$ with this property? 
(This part of the question generalizes 
the notion of self-twists of automorphic representations.)

\item Suppose that  ${\rm BC}(\Pi \boxtimes \Pi')$ 
is automorphic. Then is there an 
automorphic representation  $\Pi''$ of  $\GL_n(\A_E)$ with ${\rm BC}(\Pi'' \boxtimes \Pi') 
= {\rm BC}(\Pi \boxtimes \Pi')$?

\item Suppose that  ${\rm As}(\Pi)$ 
is automorphic. Then is there an 
automorphic representation  $\Pi''$ of  $\GL_n(\A_E)$ with ${\rm As}(\Pi'' ) 
= {\rm As}(\Pi)$? What are the various automorphic representations $\Pi''$ 
of $\GL_n(\A_E)$ with this property? 
\end{enumerate}

\vspace{1cm}

\end{document}